\documentclass[11pt]{article}
\usepackage[english]{babel}
\usepackage[utf8]{inputenc}
\usepackage[T1]{fontenc}
\usepackage{amsfonts}
\usepackage{amsmath}
\usepackage{amssymb}
\usepackage{amsthm}
\usepackage{bm}
\usepackage{upgreek}

\paperheight=29.7cm
\paperwidth=21cm
\setlength\textwidth{16cm}
\hoffset=-1in
\setlength\marginparsep{0cm}
\setlength\marginparwidth{1cm}
\setlength\marginparpush{0cm}
\setlength\evensidemargin{2.5cm}
\setlength\oddsidemargin{2.5cm}
\setlength\topmargin{2.5cm}
\setlength\headheight{0cm}
\setlength\headsep{0cm}
\voffset=-1in
\setlength\textheight{24cm}
\setlength{\parindent}{0,5cm}
\setlength{\parskip}{1.5mm}

\newcommand{\bc}{\bm{c}}
\newcommand{\bN}{\bm{N}}
\newcommand{\ga}{\mathfrak{a}}
\newcommand{\gA}{\mathfrak{A}}
\newcommand{\gb}{\mathfrak{b}}
\newcommand{\gc}{\mathfrak{c}}

\newcommand{\gC}{\mathfrak{C}}

\newcommand{\C}{\mathbb{C}}
\newcommand{\N}{\mathbb{N}}

\newcommand{\R}{\mathbb{R}}

\newcommand{\boC}{\mathcal{C}}
\newcommand{\boE}{\mathcal{E}}

\newcommand{\boG}{\mathcal{G}}
\newcommand{\boH}{\mathcal{H}}

\newcommand{\boL}{\mathcal{L}}
\newcommand{\boM}{\mathcal{M}}
\newcommand{\boN}{\mathcal{N}}
\newcommand{\boO}{\mathcal{O}}

\newcommand{\boR}{\mathcal{R}}

\newcommand{\boU}{\mathcal{U}}
\newcommand{\boV}{\mathcal{V}}

\newcommand{\eps}{\varepsilon}
\newcommand{\ch}{{\rm ch}}
\newcommand{\Adm}{{\rm Adm}}

\newcommand{\Pos}{{\rm Pos}}

\newcommand{\sign}{{\rm sign}}
\renewcommand{\th}{{\rm th}}

\newtheorem{cor}{Corollary}
\newtheorem{lemma}{Lemma}
\newtheorem{prop}{Proposition}
\newtheorem{step}{Step}
\newtheorem{theorem}{Theorem}
\newtheorem*{theorem*}{Theorem}
\theoremstyle{definition}
\newtheorem*{merci}{Acknowledgments}
\newtheorem{remark}{Remark}

\begin{document}

\title{Stability in the energy space for chains of solitons of the one-dimensional Gross-Pitaevskii equation}
\author{
\renewcommand{\thefootnote}{\arabic{footnote}}
Fabrice B\'ethuel \footnotemark[1], Philippe Gravejat \footnotemark[2], Didier Smets \footnotemark[3]}
\footnotetext[1]{Laboratoire Jacques-Louis Lions, Universit\'e Pierre et Marie Curie, Bo\^ite Courrier 187, 75252 Paris Cedex 05, France. E-mail: bethuel@ann.jussieu.fr}
\footnotetext[2]{Centre de Math\'ematiques Laurent Schwartz, \'Ecole Polytechnique, 91128 Palaiseau Cedex, France. E-mail: gravejat@math.polytechnique.fr}
\footnotetext[3]{Laboratoire Jacques-Louis Lions, Universit\'e Pierre et Marie Curie, Bo\^ite Courrier 187, 75252 Paris Cedex 05, France. E-mail: smets@ann.jussieu.fr}
\maketitle

\begin{abstract}
We establish the stability in the energy space for sums of solitons of the one-dimensional Gross-Pitaevskii equation when their speeds are mutually distinct and distinct from zero, and when the solitons are initially well-separated and spatially ordered according to their speeds.
\end{abstract}

\section{Introduction}

In this paper, we focus on the one-dimensional Gross-Pitaevskii equation 
\renewcommand{\theequation}{GP}
\begin{equation}
\label{GP}
i \partial_t \Psi + \partial_x^2 \Psi + \Psi \big( 1 - |\Psi|^2 \big) = 0,
\end{equation}
for a function $\Psi: \R \times \R \to \C$, supplemented with the boundary condition at infinity
\renewcommand{\theequation}{\arabic{equation}}
\setcounter{equation}{0}
\begin{equation}
\label{bdinfini}
|\Psi(x, t)| \to 1, \quad {\rm as} \quad |x| \to + \infty.
\end{equation}
The Gross-Pitaevskii equation is a defocusing nonlinear Schr\"odinger equation. In one spatial dimension, solutions with finite Ginzburg-Landau energy
$$\boE(\Psi) := \frac{1}{2} \int_\R |\partial_x \Psi|^2 + \frac{1}{4} \int_\R (1 - |\Psi|^2)^2,$$
are globally defined, and the energy is conserved along the flow.

In 1972, V.E. Zakharov and A.B. Shabat \cite{ShabZak2} have shown that the Gross-Pitaevskii equation supplemented with condition \eqref{bdinfini} is integrable by means of an inverse scattering transform. As a consequence, they exhibited exact soliton and multi-soliton solutions. It is generally believed that such solutions play a central role in the long-time asymptotics of the flow, and sometimes conjectured that any initial datum with finite Ginzburg-Landau energy should eventually resolve into a multi-soliton solution plus a vanishing (in an appropriate sense) dispersive tail. Detailed asymptotic expansions for smooth and fast decaying solutions, based on their scattering data at the initial time, have actually been obtained by A.H. Vartanian in \cite{Vartani2}, using reductions to Riemann-Hilbert problems. To our knowledge, for initial data in the energy space, there is no rigorous evidence supporting the aforementioned behaviour further than the orbital stability of a single soliton (see e.g. \cite{LinZhiw1, BetGrSa2, BeGrSaS1, GeraZha1}). The main goal of this paper is to provide a partial justification through the proof of the orbital stability in the energy space of some finite sums of solitons along the Gross-Pitaevskii flow. Our approach does not make any use of the integrability of the equation, and is largely influenced by the corresponding study of the generalized Korteweg-de Vries equation (see \cite{MarMeTs1}) and of the nonlinear Schr\"odinger equation with vanishing data at infinity (see \cite{MarMeTs2}) by Y. Martel, F. Merle and T.-P. Tsai. The main ingredient is a monotonicity formula for the momentum (see Subsection \ref{sub:monotonicity}).

\subsection{Soliton solutions to the Gross-Pitaevskii equation}
\label{sub:soli}

A soliton with speed $c$ is a solution to \eqref{GP} which takes the form
$$\Psi(x, t) := U_c(x - c t).$$
Its profile $U_c$ is a solution to the ordinary differential equation
\begin{equation}
\label{eq:solc}
- i c \partial_x U_c + \partial_{xx}^2 U_c + U_c \big( 1 - |U_c|^2 \big) = 0.
\end{equation}
The solutions to \eqref{eq:solc} with finite energy are classified and indeed explicit. For $|c| \geq \sqrt{2}$, all of them are identically constant. When $|c| < \sqrt{2}$, there exist non-constant solutions with finite energy. Up to the invariances of the problem, i.e. the multiplication by a constant of modulus one and the invariance by translation, they are uniquely given by the expression
\begin{equation}
\label{form:solc}
U_c(x) = \sqrt{\frac{2 - c^2}{2}} \th \Big( \frac{\sqrt{2 - c^2} x}{2} \Big) + i \frac{c}{\sqrt{2}}.
\end{equation}
Notice that the soliton $U_c$ does not vanish on $\R$ when $c \neq 0$. As a matter of fact, our stability analysis below requires non-vanishing solutions. Indeed, it is performed on a reformulation of \eqref{GP} which only makes sense for such solutions, and which we introduce next.

\subsection{Hydrodynamical form of the Gross-Pitaevskii equation}

Provided a solution $\Psi$ to \eqref{GP} does not vanish, it may be written, at least formally, as 
$$\Psi := \varrho \exp i \varphi,$$
where $\varrho := |\Psi|$. Still on a formal level, the functions $\eta := 1 - \varrho^2$ and $v := \partial_x \varphi$ are then solutions to an hydrodynamical form of \eqref{GP}, namely
\renewcommand{\theequation}{HGP}
\begin{equation}
\label{HGP}
\left\{ \begin{array}{ll}
\partial_t \eta = \partial_x \big( 2v - 2 \eta v \big),\\[5pt]
\displaystyle \partial_t v = \partial_x \Big( \eta - v^2 - \partial_x \Big( \frac{\partial_x \eta}{2 (1 - \eta)} \Big) + \frac{(\partial_x \eta)^2}{4 (1 - \eta)^2} \Big).
\end{array} \right.
\end{equation}
For $k \geq 0$, we introduce the Hilbert spaces $X^k(\R) := H^{k + 1}(\R) \times H^k(\R)$, equipped with the norm 
$$\| (\eta, v) \|_{X^k}^2 := \| \eta \|_{H^{k + 1}}^2 + \| v \|_{H^k}^2,$$
and their open subsets
\footnote{The notation $\boN\boV$ stands for non-vanishing.}
$$\boN\boV^k(\R) := \big\{ (\eta, v) \in X^k(\R), \ {\rm s.t.} \ \max_{x \in \R} \eta(x) < 1 \big\},$$
consisting of states with non-vanishing corresponding $\Psi$. In the sequel, we set $X(\R) := X^0(\R)$ and $\boN\boV(\R) := \boN\boV^0(\R)$.

The Ginzburg-Landau energy
\footnote{When $\Psi$ may be written as $\Psi = \varrho \exp i \varphi$, the quantities $\boE(\Psi)$ and $E(\eta, v)$, with $\eta = 1 - \varrho^2$ and $v = \partial_x \varphi$, are exactly equal.}
\renewcommand{\theequation}{\arabic{equation}}
\setcounter{equation}{3}
\begin{equation}
\label{def:E}
E(\eta, v) := \int_\R e(\eta, v) := \frac{1}{8} \int_\R \frac{(\partial_x \eta)^2}{1 - \eta} + \frac{1}{2} \int_\R (1 - \eta) v^2 + \frac{1}{4} \int_\R \eta^2,
\end{equation}
and the momentum
$$P(\eta, v) := \int_\R p(\eta, v) := \frac{1}{2} \int_\R \eta v,$$
are well-defined and smooth functionals on $\boN\boV(\R)$. In view of \eqref{def:E}, the space $\boN\boV(\R)$ is the energy space for states $(\eta, v)$ satisfying the non-vanishing condition $\max_{x \in \R} \eta(x) < 1$.

Concerning the Cauchy theory for \eqref{HGP}, we have 

\begin{theorem}[\cite{Tartous0}]
\label{thm:cauchyhgp}
Let $k \in \N$ and $(\eta^0, v^0) \in \boN\boV^k(\R)$. There exists a maximal time $T_{\max} > 0$ and a unique solution $(\eta, v) \in \boC^0([0, T_{\max}), \boN\boV^k(\R))$ to equation \eqref{HGP} with initial datum $(\eta^0, v^0)$. The maximal time $T_{\max}$ is continuous with respect to the initial datum $(\eta^0, v^0)$, and is characterized by 
$$\lim_{t \to T_{\max}} \max_{x \in \R} \eta(x) = 1 \quad {\rm if} \quad T_{\rm max} < + \infty.$$ 
The flow map $(\eta^0, v^0) \mapsto (\eta, v)$ is locally well-defined and continuous from $\boN\boV^k(\R)$ to $\boC^0([0, T], \linebreak[1] \boN\boV^k(\R))$ for any $T < T_{\max}$. Moreover, the energy $E$ and the momentum $P$ are constant along the flow. 
\end{theorem}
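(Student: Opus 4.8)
The plan is to establish the theorem by transferring the known Cauchy theory for the Gross-Pitaevskii equation \eqref{GP} itself, exploiting that \eqref{HGP} is precisely its hydrodynamical reformulation through the Madelung transform. Recall that the one-dimensional equation \eqref{GP} is globally well-posed in the energy space $\{\Psi : \boE(\Psi) < +\infty\}$, and in its higher-regularity analogues, with $\boE$ and the momentum conserved along the flow. The key observation is that, as long as $\Psi$ does not vanish, the correspondence $\Psi \leftrightarrow (\eta, v)$ given by $\eta = 1 - |\Psi|^2$ and $v = \partial_x \varphi$ is a bijection under which \eqref{GP} becomes \eqref{HGP} and the two energies coincide, as recorded in the footnote to \eqref{def:E}. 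Every assertion of Theorem \ref{thm:cauchyhgp} should then be read off from its GP counterpart, the blow-up of the HGP solution being nothing but the vanishing of $\Psi$, that is $\eta \to 1$.

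First I would make the Madelung transform rigorous at the level of function spaces. For non-vanishing $\Psi$ one sets $\eta := 1 - |\Psi|^2$ and, since the phase $\varphi$ itself need not live in a convenient space, one defines the globally meaningful velocity directly by $v := \Im(\bar\Psi \, \partial_x \Psi)/|\Psi|^2$. I would then check that this map sends the non-vanishing GP energy space onto $\boN\boV^k(\R)$ and is a homeomorphism in the relevant topologies, with inverse $\Psi = \sqrt{1 - \eta}\, \exp\big(i \int_{-\infty}^x v\big)$ up to a constant phase. The regularity bookkeeping — that $\partial_x \Psi \in H^k$ is equivalent to $(\eta, v) \in X^k$ — rests on the lower bound $1 - \eta = |\Psi|^2 \geq \delta > 0$, which holds on any interval on which the solution remains in $\boN\boV^k(\R)$ and supplies the control of $1/(1-\eta)$ and of the composition $\sqrt{1-\eta}$ needed to handle the nonlinear terms of \eqref{HGP}.

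With the transform in hand, existence and uniqueness of a maximal solution $(\eta, v) \in \boC^0([0, T_{\max}), \boN\boV^k(\R))$ follow by pulling back the global GP solution $\Psi$, with $T_{\max}$ equal to the first vanishing time $\inf\{t : \min_x |\Psi(\cdot, t)| = 0\}$. If $T_{\max} < +\infty$, then $\Psi$ cannot stay bounded away from zero up to $T_{\max}$ — otherwise $\delta$ would remain positive and the HGP solution would extend, contradicting maximality — so $\max_x \eta \to 1$, which is the claimed characterization. Continuity of the flow map and of $T_{\max}$ in the initial datum I would deduce from the continuity of the GP flow, the openness of the non-vanishing condition, and the continuity of $(t, (\eta^0, v^0)) \mapsto \max_x \eta(\cdot, t)$. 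The conservation of $E$ and $P$ is then immediate from the identity $E(\eta, v) = \boE(\Psi)$ and the analogous identity for the momenta, both conserved along \eqref{GP}.

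The main obstacle I anticipate is not existence but the precise continuity of the Madelung transform and its inverse in the $X^k$ topology, in particular the non-local phase reconstruction $\varphi = \int_{-\infty}^x v$ and the behaviour near the vanishing threshold. One must propagate a uniform lower bound $|\Psi| \geq \delta$ on compact time subintervals to keep the compositions $1/(1-\eta)$ and $\sqrt{1-\eta}$ and their derivatives under control, and verify that a small $X^k$-perturbation of the datum does not force earlier vanishing; this transversality-type point is what makes the continuity of $T_{\max}$ the genuinely delicate ingredient. An alternative would be a direct quasilinear energy-estimate and fixed-point treatment of \eqref{HGP}, but this is considerably heavier, and the transfer from \eqref{GP} is the economical route.
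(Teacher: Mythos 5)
Your proposal follows essentially the same route as the paper, which does not prove Theorem \ref{thm:cauchyhgp} itself but cites \cite{Tartous0} and describes precisely this strategy: the Cauchy theory for \eqref{GP} in Zhidkov spaces combined with the fact that the Madelung map $\Psi \mapsto (\eta, v)$ is locally Lipschitz into $X^k(\R)$ on the non-vanishing set. The only point the paper explicitly singles out that your sketch passes over quickly is that the uniqueness part requires slightly more care.
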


We refer to \cite{Tartous0} for a proof. It is a consequence of the Cauchy theory for \eqref{GP} in Zhidkov spaces $Z^k(\R)$ (see e.g. \cite{Zhidkov1, Gallo3, Gerard1}), and the fact that the mapping $\Psi \mapsto (\eta, v)$ is locally Lipschitz from $Z^k(\R)$ to $X^k(\R)$ provided that $(\eta, v) \in \boN\boV(\R)$. The uniqueness part requires slightly more care. 

\subsection{Statement of the main result}

When $c \neq 0$, the solitons $U_c$ described in Subsection \ref{sub:soli} do not vanish and may thus be written under the form
$$U_c := \varrho_c \exp i \varphi_c.$$
In view of formula \eqref{form:solc}, the maps $\eta_c := 1 - \varrho_c^2$ and $v_c := \partial_x \varphi_c$ are given by
\footnote{A striking point in formula \eqref{form:etavc} is the fact that, up to some scaling, the map $\eta_c$ is exactly the soliton of the Korteweg-de Vries equation
\renewcommand{\theequation}{KdV}
\begin{equation}
\label{KdV}
\partial_t u + \partial_x^3 u + u \partial_x u = 0.
\end{equation}
Like the Gross-Pitaevskii equation, the Korteweg-de Vries equation indeed owns a family of solitons $u_\sigma$, given, up to translations, by
$$u_\sigma(x) := \frac{3 \sigma}{\ch(\frac{\sqrt{\sigma} x}{2})^2},$$
for some positive speed $\sigma$. Actually, the link between the two equations does not reduce to the level of the solitons. Both of them are integrable by means of the inverse scattering transform (see e.g. \cite{Miura2} for the Korteweg-de Vries equation), and moreover, as was pointed out in \cite{BeGrSaS1, BeGrSaS2} (see also \cite{ChirRou2}), the Korteweg-de Vries equation appears as the limit of system \eqref{HGP} in some long-wave regime.}
\renewcommand{\theequation}{\arabic{equation}}
\setcounter{equation}{4}
\begin{equation}
\label{form:etavc}
\eta_c(x) = \frac{2 - c^2}{2 \ch \big( \frac{\sqrt{2 - c^2} x}{2} \big)^2} \quad {\rm and} \quad v_c(x) = - \frac{c \eta_c(x)}{2 \big( 1 - \eta_c(x) \big)} = - \frac{c (2 - c^2)}{2 \big( 2 \ch \big( \frac{\sqrt{2 - c^2} x}{2} \big)^2 - 2 + c^2 \big)}.
\end{equation}
In the sequel, we set
$$Q_{c, a} := \big( \eta_{c, a}, v_{c, a} \big) := \big( \eta_c(\cdot - a), v_c(\cdot - a) \big),$$
for $0 < |c| < \sqrt{2}$ and $a \in \R$. We introduce the set of admissible speeds as
$$\Adm_N := \big\{ \gc := (c_1, \ldots, c_N) \in (- \sqrt{2}, \sqrt{2})^N, \ {\rm s.t.} \ c_k \neq 0, \ \forall 1 \leq k \leq N \big\},$$
and, for $\gc \in \Adm_N$ and $\ga := (a_1, \ldots, a_N) \in \R^N$, we define
$$R_{\gc, \ga} := \big( \eta_{\gc, \ga}, v_{\gc, \ga} \big) = \sum_{k = 1}^N Q_{c_k, a_k} = \bigg( \sum_{k = 1}^N \eta_{c_k, a_k}, \sum_{k = 1}^N v_{c_k, a_k} \bigg) = \bigg( \sum_{k = 1}^N \eta_{c_k}(\cdot - a_k), \sum_{k = 1}^N v_{c_k}(\cdot - a_k) \bigg).$$
In the sequel, we refer to small perturbations of the functions $R_{\gc, \ga}$ as chains of solitons. We also denote
$$\nu_\gc := \min \Big\{ \sqrt{2 - c_k^2}, \ 1 \leq k \leq N \Big\}.$$ 
Our main result is

\begin{theorem}
\label{thm:orbistab}
Let $ \gc^* \in \Adm_N$ be such that 
\begin{equation}
\label{cond:speed}
c_1^* < \ldots < c_N^*.
\end{equation}
There exist positive numbers $\alpha^*$, $L^*$ and $A^*$, depending
\footnote{A close inspection of the proof actually shows that $\alpha^*$, $L^*$ and $A^*$ only depend on a lower bound on $\nu_{\gc^*}$, on $\min \{ |c_j^* - c_k^*|, \ 1 \leq j < k \leq N \}$ and on $\min \{ |c_k^*|, \ 1 \leq k \leq N \}$.}
only on $\gc^*$, such that the following holds. Assume that $(\eta^0, v^0) \in \boN\boV(\R)$ satisfies 
\begin{equation}
\label{cond:init}
\alpha^0 := \Big\| (\eta^0, v^0) - R_{\gc^*, \ga^0} \Big\|_X \leq \alpha^*,
\end{equation}
for some points $\ga^0 = (a_1^0, \ldots, a_N^0) \in \R^N$ such that
\begin{equation}
\label{cond:separ}
L^0 := \min \big\{ a_{k + 1}^0 - a_k^0, \ 1 \leq k \leq N - 1 \big\} \geq L^*.
\end{equation}
Then the unique solution $(\eta, v)$ to \eqref{HGP} with initial datum $(\eta^0, v^0)$ is globally defined on $[0, + \infty)$, and there exists a function $\ga = (a_1, \ldots, a_N) \in \boC^1([0, + \infty), \R^N)$ such that 
\begin{equation}
\label{eq:modul}
\sum_{k = 1}^N \big| a_k'(t) - c_k^* \big| \leq A^* \Big( \alpha^0 + \exp \Big( - \frac{\nu_{\gc^*} L^0}{33} \Big) \Big),
\end{equation}
and
\begin{equation}
\label{orbistab}
\Big\| (\eta(\cdot, t), v(\cdot, t)) - R_{\gc^*, \ga(t)} \Big\|_X \leq A^* \Big( \alpha^0 + \exp \Big( - \frac{\nu_{\gc^*} L^0}{33} \Big) \Big),
\end{equation}
for any $t \in [0, + \infty)$.
\end{theorem}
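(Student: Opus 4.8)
The plan is to combine modulation with a bootstrap (continuity) argument in the spirit of Martel, Merle and Tsai, using only the two conserved quantities at hand, the energy $E$ and the momentum $P$. Assume that on a maximal interval $[0,T]$ the solution $(\eta,v)$ provided by Theorem~\ref{thm:cauchyhgp} stays within some fixed distance $\delta^*$ in $X(\R)$ of the family of well-separated chains $\{R_{\gc^*,\ga}\}$; the goal is to upgrade this a priori hypothesis into the bound \eqref{orbistab} with a constant that does not depend on $T$. As long as the distance remains small, the implicit function theorem yields modulation parameters $\ga(t)=(a_1(t),\dots,a_N(t))\in\boC^1([0,T],\R^N)$ for which the perturbation $(\eps,w):=(\eta,v)-R_{\gc^*,\ga(t)}$ satisfies the $N$ orthogonality conditions $\langle(\eps,w),\partial_x Q_{c_k^*,a_k(t)}\rangle=0$, thereby removing the $N$ translation directions. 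Once the improved bound is in force, it also keeps $\max_x\eta(\cdot,t)$ bounded away from $1$, so the blow-up criterion of Theorem~\ref{thm:cauchyhgp} gives $T=+\infty$ and global existence.

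The cornerstone, and the step I expect to be the main obstacle, is a monotonicity formula for localized momenta. For each $1\le k\le N-1$ I would fix an intermediate speed $\sigma_k$ with $c_k^*<\sigma_k<c_{k+1}^*$ and a smooth nondecreasing step $\chi$, and set
\[
\boM_k(t):=\frac12\int_\R \eta(x,t)\,v(x,t)\,\chi\big(x-\sigma_k t-y_k\big)\,dx,
\]
where $y_k$ is a midpoint of the $k$-th initial gap. Using the local conservation law for the momentum density $p=\frac12\eta v$ read off from \eqref{HGP}, one gets $\boM_k'(t)=-\int_\R (J+\sigma_k p)\,\chi'(\cdot-\sigma_k t-y_k)$, where $J$ is the momentum flux. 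The weight $\chi'$ is concentrated in the inter-soliton gaps, where $(\eta,v)$ is exponentially close to the vacuum, and the subsonic condition $|\sigma_k|<\sqrt2$ makes the vacuum quadratic part of $J+\sigma_k p$ definite; hence $\boM_k$ is monotone up to an error of size $C\exp(-\beta\nu_{\gc^*}L)$. Here the ordering \eqref{cond:speed} together with the spatial ordering $a_1<\cdots<a_N$ is precisely what forces each weight to travel strictly between its two neighbouring solitons, so that the gaps only widen and the exponentially small error is integrable in time.

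With the translations modulated out and the localized momenta under control, I would introduce the Lyapunov functional
\[
\boF(t):=E(\eta(t),v(t))-\sum_{k=1}^{N}c_k^*\big(\boM_k(t)-\boM_{k-1}(t)\big),\qquad \boM_0:=0,\ \boM_N:=P,
\]
which by the previous step is almost monotone since its only time-dependent pieces are the $\boM_k$ entering with the positive weights $c_{k+1}^*-c_k^*$. On the other hand $\boF$ splits into a sum of localized actions $E_k-c_k^*P_k$, one per soliton region, and the single-soliton theory cited in the introduction guarantees that the second variation of $E-c_k^*P$ at $Q_{c_k^*}$ is coercive on the orthogonal complement of $\partial_x Q_{c_k^*}$, save for the direction $\partial_c Q_{c_k^*}$ which survives because speeds are not modulated. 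This last, potentially unstable, direction I would pin down at quadratic order using the conservation of the total momentum $P$ and the near-minimality of the chain. Summing the localized coercivity estimates and absorbing the exponentially small cross terms coming from the separation yields
\[
\boF(t)-\boF(R_{\gc^*,\ga(t)})\ \geq\ \lambda\,\big\|(\eta(t),v(t))-R_{\gc^*,\ga(t)}\big\|_X^2-C\exp(-\beta\nu_{\gc^*}L^0).
\]

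It remains to chain the estimates. Since each soliton is a critical point of its localized action, a Taylor expansion gives $\boF(0)-\boF(R_{\gc^*,\ga^0})\lesssim(\alpha^0)^2+\exp(-\beta\nu_{\gc^*}L^0)$, while $\boF(R_{\gc^*,\ga(t)})$ equals the sum of the individual soliton actions up to the same exponentially small error for every $t$. Combining almost-monotonicity of $\boF$ with the coercivity lower bound then produces $\|(\eta(t),v(t))-R_{\gc^*,\ga(t)}\|_X^2\lesssim(\alpha^0)^2+\exp(-\beta\nu_{\gc^*}L^0)$, which is \eqref{orbistab} after adjusting constants and the exponential rate to the value $\nu_{\gc^*}/33$. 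Finally, differentiating the orthogonality conditions in time and inverting the resulting near-diagonal linear system (the diagonal being the speeds $c_k^*$, the off-diagonal interactions being exponentially small) bounds $\sum_k|a_k'(t)-c_k^*|$ by $\|(\eps,w)(t)\|_X+\exp(-\beta\nu_{\gc^*}L^0)$, giving \eqref{eq:modul}. Choosing $\alpha^*$ small and $L^*$ large so that the right-hand side stays below $\delta^*/2$ closes the bootstrap, and the hardest points to make rigorous are the uniform definiteness in the monotonicity formula and the control of the unmodulated speed directions in the coercivity step.
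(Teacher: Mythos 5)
Your overall architecture --- modulation of the translation parameters, almost-monotonicity of momenta localized on weights travelling in the widening gaps, a Lyapunov functional of the form $E+\sum_k c_k^* P_k$ whose localized pieces are coercive, and a bootstrap --- is the same as the paper's, and the monotonicity step is set up correctly (fixed intermediate speeds $\sigma_k\in(c_k^*,c_{k+1}^*)$ work just as well as the paper's choice of attaching the weights to the modulated midpoints; the subsonic condition on $\sigma_k$ together with the smallness of $\eta$ in the gaps is indeed what makes the flux term signed, up to the third-derivative term $\partial_x^3(\eta+\ln(1-\eta))$ which you should control by $|\eta+\ln(1-\eta)|\leq\eta^2$ and a bound $|\chi'''|\leq C_0\chi'$). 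The one genuine divergence is that you modulate only the positions, keeping the speeds frozen at $c_k^*$, whereas the paper modulates both $\ga(t)$ \emph{and} $\gc(t)$ and imposes the second family of orthogonality conditions $P'(Q_{c_k,a_k})(\eps)=0$, which makes each localized quadratic form $H_{c_k}$ directly coercive (Proposition \ref{prop:linstab}); the price the paper pays is having to show afterwards that $|c_k(t)-c_k^*|$ stays of order $\alpha^0$, which it does precisely via the two-sided control of the localized momenta.

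This is where your proposal has a real gap as written. With only the $N$ translation orthogonality conditions, each localized quadratic form $E''(Q_{c_k^*})+c_k^*P''(Q_{c_k^*})$ retains one negative direction, so you must control $N$ independent bad components. You propose to ``pin them down at quadratic order using the conservation of the total momentum $P$'': for $N\geq 2$ a single scalar invariant cannot control $N$ directions, and moreover the control cannot be quadratic --- since the initial perturbation has no reason to be orthogonal to $P'(Q_{c_k^*,a_k^0})$, the best one can hope for is $|\langle P'(Q_{c_k^*,a_k}),\eps(t)\rangle|\lesssim \alpha^0+\|\eps(t)\|_X^2+e^{-\beta\nu_{\gc^*}L^0}$, which is first order in the data (and still suffices for \eqref{orbistab}). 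The correct repair, available inside your own framework, is to use the almost-monotone localized momenta themselves: the one-sided bounds $\boM_k'(t)\leq Ce^{-\beta\nu_{\gc^*}(L^0+\sigma^*t)}$, combined with the conservation of $E$ and $P$ and the upper bound on $\boF(0)$, yield \emph{two-sided} control of each $\boM_k(t)-\boM_k(0)$, hence of each localized momentum near soliton $k$, hence of the $k$-th bad component; this is exactly the chain of Steps 1--3 in the paper's Subsection \ref{sub:conteps}, and it must be run \emph{before} (or simultaneously with, via the bootstrap) the coercivity estimate, since the lower bound on $\boF(t)$ that enters the momentum step itself involves the bad components. Until this loop is closed explicitly, the coercivity inequality you state is not justified. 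Two minor points: your boundary conventions should be $\boM_0:=P$ and $\boM_N:=0$ (not the reverse) for $\boM_{k-1}-\boM_k$ to be the momentum carried by the $k$-th soliton and for $\boF$ to reduce, by Abel summation, to $E+c_1^*P+\sum_{k=1}^{N-1}(c_{k+1}^*-c_k^*)\boM_k$ as in \eqref{eq:decompG}; and the localized action should carry the sign $E+c_k^*P_k$, consistent with $E'(Q_c)+cP'(Q_c)=0$.
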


In different terms, chains of solitons with strictly ordered speeds are preserved along the Gross-Pitaevskii flow in positive time. Since the initial speeds are strictly ordered, the mutual distances between the individual solitons appearing in the chain increase proportionally with time, so that the interactions between solitons decrease to zero. As a matter of fact, integrating \eqref{eq:modul} yields
$$a_{k + 1}(t) - a_{k}(t) \geq \Big( c_{k + 1}^* - c_k^* - 2 A^* \Big( \alpha^0 + \exp \Big( - \frac{\nu_{\gc^*} L^0}{33} \Big) \Big) \Big) t + a_{k + 1}(0) - a_k(0).$$
A typical example of chain of solitons is given by the exact multi-solitons constructed by Zakharov and Shabat \cite{ShabZak2} (see also \cite{FaddTak0}). In particular, Theorem \ref{thm:orbistab} provides the orbital stability of multi-solitons in the energy space in positive time.

In view of the time reversibility of the Gross-Pitaevskii equation, Theorem \ref{thm:orbistab} also provides the stability in negative time of chains of solitons with reversely ordered speeds. To our knowledge, the question of the stability in the energy space of chains of solitons for both positive and negative times remains open when the chain includes at least two solitons. For the generalized Korteweg-de Vries equations, results of this type have recently been obtained by Martel and Merle \cite{MartMer5, MartMer6}.

Finally, we believe that our arguments still apply to non-integrable versions of the nonlinear Schr\"o\-dinger equation with non-vanishing condition at infinity, provided individual solitons are themselves orbitally stable. The latter condition has recently been treated in details in a work by Chiron \cite{Chiron7}, where numerous examples are presented.

\subsection{Elements in the proofs}

We will present a detailed description of the elements in the proof of Theorem \ref{thm:orbistab} in Subsections \ref{sub:coer} to \ref{sub:end} below. We precede this description by a voluntarily oversimplified overview in the case of a two-soliton, in the hope that this may ease the reading of the detailed description.

The solitons $Q_c$ may be characterized as minimizers of some variational problems. In terms of $Q_c = (\eta_c, v_c)$, equation \eqref{eq:solc} reduces to
\begin{equation}
\label{eq:crit}
E'(Q_c) + c P'(Q_c) = 0.
\end{equation}
Equation \eqref{eq:crit} is the Euler-Lagrange equation for the minimization of the Ginzburg-Landau energy $E$ under fixed scalar momentum $P$, and the speed $c$ is the corresponding Lagrange multiplier. As a matter of fact, we have
\begin{equation}
\label{eq:minimizingP}
E(Q_c) = \min \Big\{ E(\eta, v), \ (\eta, v) \in X(\R) \ {\rm s.t.} \ \eta < 1 \ {\rm and} \ P(\eta, v) = P(Q_c) \Big\},
\end{equation}
for any $c \in (- \sqrt{2}, \sqrt{2}) \setminus \{ 0 \}$ (see e.g. \cite{BetGrSa2}), and $Q_c$ is the only minimizer up to the invariances of the problem. This minimizing property, combined with the conservation of $E$ and $P$ by the Gross-Pitaevskii flow, yields the orbital stability of single solitons. Instability of a {\it chain} of solitons, should it happen, could thus only arise from their mutual interactions. Assume for simplicity that at some fixed time the solution at hand is a chain of two solitons
$$(\eta, v) \simeq Q_{c_1, a_1} + Q_{c_2, a_2},$$
where $a_2 - a_1 \gg 1$. In particular, the total energy $E$ and momentum $P$ satisfy
$$E \simeq E(Q_{c_1}) + E(Q_{c_2}) \quad {\rm and} \quad P \simeq P(Q_{c_1}) + P(Q_{c_2}).$$
Assume next that at some later time, interaction has occurred and 
$$(\eta, v) \simeq B_1(\cdot - b_1) + B_2(\cdot - b_2),$$
where still $b_2 - b_1 \gg 1$, but $B_1$ and $B_2$ need no longer be solitons. Set
$$\delta P := P(Q_{c_1}) - P(B_1).$$
Since the total momentum is an invariant of the flow, we also have
$$\delta P \simeq P(B_2) - P(Q_{c_2}),$$
so that $\delta P$ may be understood as the amount of momentum transfered from $Q_{c_1, a_1}$ towards $Q_{c_2, a_2}$ (we let aside here the possibility of transfer to the background).

From \eqref{eq:minimizingP}, we infer that the energies $E(B_1)$ and $E(B_2)$ are necessarily larger than the energies of the corresponding solitons with momentum $P(B_1)$, respectively $P(B_2)$. Since, in particular from \eqref{eq:crit},
$$\frac{d}{dc} \Big( E(Q_c) \Big) = - c \frac{d}{dc} \Big( P(Q_c) \Big),$$
we obtain at first order in $\delta P$,
\begin{equation}
\label{eq:meluche1}
E(B_1) \gtrsim E(Q_{c_1}) + c_1 \delta P
\end{equation}
and 
\begin{equation}
\label{eq:meluche2}
E(B_2) \gtrsim E(Q_{c_2}) - c_2 \delta P.
\end{equation}
On the other hand, by conservation of the total energy, we have 
$$E(Q_{c_1}) + E(Q_{c_2}) \simeq E(B_1) + E(B_2),$$
so that, by \eqref{eq:meluche2},
\begin{equation}
\label{eq:meluche3}
E(B_1) \lesssim E(Q_{c_1}) + E(Q_{c_2}) - E(B_2) \lesssim E(Q_{c_1}) + c_2 \delta P.
\end{equation}
Combining \eqref{eq:meluche1} and \eqref{eq:meluche3} yields
\begin{equation}
\label{eq:meluche4}
(c_2 - c_1) \delta P \gtrsim 0.
\end{equation}
The last and key observation is a monotonicity formula, which states that in the above configuration necessarily
\begin{equation}
\label{eq:meluche5}
\delta P \lesssim 0.
\end{equation}
Therefore, if the speeds are ordered as in Theorem \ref{thm:orbistab}, namely $c_2 > c_1$, then \eqref{eq:meluche4} may hold only if $\delta P = 0$. Orbital stability of the chain then follows from the orbital stability of the individual solitons.

\begin{remark}
The monotonicity formula \eqref{eq:meluche5} has a simple physical interpretation. Momentum is a signed quantity which, in the present normalization, is non-negative for ``waves'' or ``particles'' travelling to the left and non-positive for those travelling to the right. Since $Q_{c_1, a_1}$ is located at the left of $Q_{c_2, a_2}$ (recall that $a_2 - a_1 \gg 1$ by assumption), the momentum transferred from $Q_{c_1, a_1}$ towards $Q_{c_2, a_2}$ (i.e. $\delta P$) needs to travel from the left to the right,
\footnote{To be more precise, the transferred momentum needs to travel from the left to the right in a reference frame attached to $Q_{c_1, a_1}$, i.e. moving at speed $c_1$. Dispersive waves have a speed larger or equal in absolute value than the speed of sound $\sqrt{2}$ given by the dispersion relation. Since solitons are subsonic, a dispersive (hence sonic or supersonic) wave going to the right in the reference frame attached to a soliton also goes to the right in the frame of the lab. This peculiarity is a major difference between the Gross-Pitaevskii equation and the nonlinear Schr\"odinger equations with zero density at spatial infinity, where the speeds of solitons and dispersive waves may overlap.}
and is therefore non-positive. Equivalently (and alternatively from the point of view of $Q_{c_2, a_2}$), the momentum transferred from $Q_{c_2, a_2}$ towards $Q_{c_1, a_1}$, i.e. $- \delta P$, needs to travel from the right to the left, and is therefore non-negative.
\end{remark}

We are now in position to present the detailed description of the elements in the proof. For later reference, we mention the explicit values
\begin{equation}
\label{def:Pc}
P(Q_c) = \arctan \Big( \frac{c}{\sqrt{2 - c^2}} \Big) + \frac{c \sqrt{2 - c^2}}{2} - {\rm sign}(c)\frac{\pi}{2},
\end{equation}
and
\begin{equation}
\label{def:Ec}
E(Q_c) = \frac{(2 - c^2)^\frac{3}{2}}{3}.
\end{equation} 

\renewcommand{\thesubsubsection}{\Alph{subsubsection}}
\subsubsection{Minimizing properties of solitons}
\label{sub:coer}

We will rely on a quantitative version of the minimizing property \eqref{eq:minimizingP}. Define the quadratic form
$$H_c := E''(Q_c) + c P''(Q_c).$$ 
In explicit form,
\begin{equation}
\label{eq:Hc}
H_c(\eps) := \frac{1}{4} \int_\R \frac{(\partial_x \eps_\eta)^2}{1 - \eta_c} + \frac{1}{4} \int_\R \Big( 2 - \frac{\partial_{xx}^2 \eta_c}{(1 - \eta_c)^2} - \frac{(\partial_x \eta_c)^2}{(1 - \eta_c)^3} \Big) \eps_\eta^2 + \int_\R \Big( (1 - \eta_c) \eps_v^2 + (c - 2 v_c) \eps_\eta \eps_v \Big),
\end{equation}
where we have written $\eps = (\eps_\eta, \eps_v)$. We have

\begin{prop}
\label{prop:linstab}
There exists $\Lambda_c > 0$ such that
\begin{equation}
\label{eq:estimHc}
H_c(\eps) \geq \Lambda_c \|\eps\|_X^2,
\end{equation}
for any $\eps \in X(\R)$ satisfying the orthogonality conditions
\begin{equation}
\label{cond:orthc}
\langle \eps , \partial_x Q_c \rangle_{L^2(\R)} = P'(Q_c)(\eps) = 0.
\end{equation}
Moreover, $\Lambda_c$ is uniformly bounded from below by a positive constant for $c$ in a compact subset of $(- \sqrt{2}, \sqrt{2}) \setminus \{ 0 \}$.
\end{prop}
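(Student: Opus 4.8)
The plan is to prove the coercivity of $H_c$ under the two orthogonality conditions by a spectral analysis of the associated self-adjoint operator, exploiting the variational characterization \eqref{eq:minimizingP} of $Q_c$ as a constrained minimizer. First I would rewrite $H_c$ as $\langle \boL_c \eps, \eps \rangle$ for a self-adjoint operator $\boL_c$ on $X(\R)$ (or, after a suitable change of variables, on $L^2$), and study its spectrum. The starting point is the minimizing property: since $Q_c$ minimizes $E$ subject to the single constraint $P = P(Q_c)$, the second variation $H_c = E''(Q_c) + cP''(Q_c)$ must be nonnegative on the codimension-one subspace $\{\eps : P'(Q_c)(\eps) = 0\}$. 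This is the standard constrained-minimization consequence, and it tells us that $\boL_c$ has at most one negative eigenvalue, with the remaining spectrum nonnegative.

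\begin{step}
\emph{Spectral structure.} I would first identify the kernel of $\boL_c$. Differentiating the soliton equation \eqref{eq:crit} in the translation parameter shows that $\partial_x Q_c \in \ker \boL_c$, so $\boL_c$ has a nontrivial kernel generated (I expect, exactly) by $\partial_x Q_c$. The key spectral facts to establish are: (i) $\boL_c$ has exactly one strictly negative eigenvalue, (ii) zero is a simple eigenvalue with eigenfunction $\partial_x Q_c$, and (iii) the rest of the spectrum is bounded away from zero, i.e. there is a spectral gap $\Lambda_c^0 > 0$ above the zero eigenvalue. Point (iii) is where the explicit nature of $Q_c$ in \eqref{form:etavc} is useful: the essential spectrum of $\boL_c$ can be computed from the constant-coefficient operator at infinity (where $\eta_c, v_c \to 0$), and since $0 < |c| < \sqrt{2}$ this essential spectrum starts at a strictly positive value; the only possible obstruction to a gap would be additional eigenvalues in $(0, \inf \sigma_{\mathrm{ess}})$, which I would rule out or absorb.
\end{step}

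\begin{step}
\emph{From spectral structure to coercivity on the orthogonal complement.} This is the crux. Given a vector $\eps$ satisfying both orthogonality conditions in \eqref{cond:orthc}, I must show $H_c(\eps) \geq \Lambda_c \|\eps\|_X^2$. The orthogonality to $\partial_x Q_c$ removes the kernel direction, and the orthogonality $P'(Q_c)(\eps) = 0$ is precisely the constraint that kills the negative direction. The standard mechanism: let $\chi_c$ be the negative eigenfunction. Because $Q_c$ is a constrained minimizer and $P'(Q_c)(\chi_c) \neq 0$ (this nondegeneracy must be checked — it follows from $\frac{d}{dc}P(Q_c) \neq 0$, computable from \eqref{def:Pc}), the constraint $P'(Q_c)(\eps) = 0$ forces $\eps$ to avoid the negative cone, yielding $H_c(\eps) \geq 0$. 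Upgrading nonnegativity to strict coercivity with constant $\Lambda_c$ then comes from the spectral gap: decomposing $\eps$ along $\partial_x Q_c$, the negative direction, and the positive part, a contradiction/compactness argument (or an explicit quadratic estimate) converts the gap into the inequality \eqref{eq:estimHc}.
\end{step}

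The main obstacle I anticipate is Step 2, and specifically verifying the nondegeneracy $P'(Q_c)(\chi_c) \neq 0$ together with ruling out that the orthogonality constraints are ``wasted'' on directions that leave a residual negative part. The clean way to handle this is the classical lemma (going back to Weinstein and to the Grillakis–Shatah–Strauss framework) stating that if a self-adjoint operator has a single negative eigenvalue, a simple zero eigenvalue spanned by $\partial_x Q_c$, and positive essential spectrum, then it is positive definite on the subspace orthogonal to $\partial_x Q_c$ and to any vector $w$ with $\langle \boL_c^{-1} w, w\rangle < 0$; here one takes $w = P'(Q_c)$, and the sign condition reduces precisely to $\frac{d}{dc}P(Q_c) < 0$, which \eqref{def:Pc} makes explicit. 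Finally, the uniform lower bound on $\Lambda_c$ over compact subsets of $(-\sqrt 2, \sqrt 2)\setminus\{0\}$ follows from continuity of all the above spectral quantities in $c$ — the operator $\boL_c$ depends smoothly on $c$ through the explicit profiles, so $\Lambda_c$ is a continuous positive function of $c$ and hence bounded below on compacts.
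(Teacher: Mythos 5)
Your overall strategy coincides with the paper's in its second half: the passage from the spectral structure of the linearized operator to coercivity under the conditions \eqref{cond:orthc} is carried out there exactly along the Grillakis--Shatah--Strauss lines you describe, via the identity $\boH_c(\partial_c Q_c) = - P'(Q_c)$ (see \eqref{picamoles}) and the resulting negativity $H_c(\partial_c Q_c) = - \tfrac{d}{dc}\big(P(Q_c)\big) < 0$. One correction before anything else: since $\boH_c^{-1}\big(P'(Q_c)\big) = - \partial_c Q_c$, the condition $\langle \boH_c^{-1} w, w \rangle < 0$ with $w = P'(Q_c)$ is equivalent to $\tfrac{d}{dc}\big(P(Q_c)\big) > 0$, not $< 0$. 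Formula \eqref{def:Pc} gives $\tfrac{d}{dc}\big(P(Q_c)\big) = (2 - c^2)^{1/2} > 0$, so with the sign as you state it your verification would come out wrong and would appear to disprove the proposition. This is a bookkeeping slip rather than a conceptual error, but it must be fixed.

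The more substantive gap is in your Step 1, which leaves unproved precisely the facts the paper has to work for: that the kernel of the vector operator $\boH_c$ is \emph{exactly} one-dimensional, spanned by $\partial_x Q_c$, and that there is \emph{exactly} one negative eigenvalue. The constrained-minimization argument based on \eqref{eq:minimizingP} does give ``at most one negative eigenvalue'' on general grounds, but it says nothing about the kernel, and the Weinstein/GSS lemma you invoke requires the kernel to be spanned by $\partial_x Q_c$ alone; your ``(I expect, exactly)'' is where the real work lies. The paper's device for this is an algebraic completion of squares, \eqref{dussautoir}: using \eqref{form:etavc} and the soliton equation one writes $H_c(\eps) = L_c(\eps_\eta) + \int_\R (1 - \eta_c)\big(\eps_v + \tfrac{c}{2(1 - \eta_c)^2}\eps_\eta\big)^2$, where $L_c$ is a scalar Sturm--Liouville form in $\eps_\eta$ alone. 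Sturm--Liouville oscillation theory (the kernel element $\partial_x \eta_c$ has exactly one zero) pins down the scalar spectrum as in \eqref{hari}, and the nonnegative square term then transfers the count to $\boH_c$: any nonpositive direction with $\eps_\eta$ orthogonal to the scalar ground state and to $\partial_x \eta_c$ is forced to vanish. Without this reduction, or an equivalent substitute, the spectral hypotheses of your key lemma are not established. Finally, note that the spectral argument naturally yields only an $L^2 \times L^2$ lower bound; the upgrade to the $X$-norm in \eqref{eq:estimHc} requires the short additional step the paper performs at the end, reinserting a small fraction of the term $\int_\R (\partial_x \eps_\eta)^2/(1 - \eta_c)$ at the cost of a harmless perturbation of the $L^2$ constant.
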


\begin{remark}
The first orthogonality condition in \eqref{cond:orthc} is related to the invariance with respect to translations of \eqref{HGP}. It may be rephrased according to the equality
$$\langle \eps , \partial_x Q_c \rangle_{L^2} = \frac{1}{2} \frac{d}{dy} \Big( \big\| (\eta, v) - Q_c(\cdot - y) \big\|_{L^2}^2 \Big)_{|y = 0} = 0.$$
It is possible to choose the condition involving the momentum $P$ as an orthogonality condition due to the property
$$\frac{d}{dc} \big( P(Q_c) \big) \neq 0,$$ 
which follows from the explicit value \eqref{def:Pc} of $P(Q_c)$.
\end{remark}

In particular, if $\eps := (\eta, v) - Q_c$ satisfies the orthogonality conditions \eqref{cond:orthc}, then
\begin{equation}
\label{eq:coercive}
(E + c P)(\eta, v) \geq (E + c P)(Q_c) + \Lambda_c \| \eps \|^2_X + \boO \big( \| \eps \|_X^3 \big), 
\end{equation} 
as $\| \eps \|_X \to 0$.

Our next goal is to obtain a coercivity inequality in the spirit of \eqref{eq:coercive} for functions $(\eta, v)$ which do not belong to a small neighbourhood of a single soliton but instead to a neighbourhood of a finite sum of well-separated solitons like the one described by \eqref{cond:init} and \eqref{cond:separ}. We begin with the treatment of the orthogonality conditions in that case. 

\subsubsection{Orthogonal decomposition of a chain of solitons}
\label{sub:modul}

Given a positive parameter $L$, we define the set of admissible positions as
$$\Pos_N(L) := \big\{ \ga := (a_1, \ldots, a_N) \in \R^N, \ {\rm s.t.} \ a_{k + 1} > a_k + L, \ \forall 1 \leq k \leq N - 1 \big\},$$
and, for $\alpha > 0$ and $\gc = (c_1, \ldots, c_N) \in \Adm_N$, we set
$$\boU_\gc(\alpha, L) := \Big\{ (\eta, v) \in X(\R), \ {\rm s.t.} \ \inf_{\ga \in \Pos_N(L)} \big\| (\eta, v) - R_{\gc, \ga} \big\|_X < \alpha \Big\},$$
and
$$\mu_\gc := \min \big\{ |c_k|, \ 1 \leq k \leq N \big\}.$$
When $\alpha$ is small enough and $L$ is sufficiently large, we claim that the set $\boU_\gc(\alpha, L)$ provides a suitable framework to perform satisfactory decompositions. More precisely, we have

\begin{prop}
\label{prop:stadec}
Let $\gc^* \in \Adm_N$. There exist positive numbers $\alpha_1$, $L_1$ and $K_1$, depending only on $\gc^*$, 
and two continuously differentiable functions $\gC \in \boC^1(\boU_{\gc^*}(\alpha_1, L_1), \Adm_N)$ and $\gA \in \boC^1(\boU_{\gc^*}(\alpha_1, L_1), \R^N)$ such that for any $(\eta, v) \in \boU_{\gc^*}(\alpha_1, L_1)$, the function 
$$\eps := (\eta, v) - R_{\gc, \ga},$$
where $\gc = (c_1, \ldots, c_N) := \gC(\eta, v)$ and $\ga = (a_1, \ldots, a_N) := \gA(\eta, v)$, satisfies the orthogonality conditions
\begin{equation}
\label{cond:staorth}
\langle \eps, \partial_x Q_{c_k, a_k} \rangle_{L^2} = P'(Q_{c_k, a_k})(\eps) = 0,
\end{equation}
for any $1 \leq k \leq N$. Moreover, if
$$\big\| (\eta, v) - R_{\gc^*, \ga^*} \big\|_X \leq \alpha,$$
for some $\ga^* = (a_1^*, \ldots, a_N^*) \in \Pos_N(L)$, with $L \geq L_1$ and $\alpha \leq \alpha_1$, then
\begin{equation}
\label{est:stamodul1}
\| \eps \|_X + \sum_{k = 1}^N |c_k - c_k^*| + \sum_{k = 1}^N |a_k - a_k^*| \leq K_1 \alpha.
\end{equation}
\end{prop}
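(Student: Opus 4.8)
The plan is to solve the $2N$ orthogonality conditions \eqref{cond:staorth} for the $2N$ parameters $(\gc, \ga)$ by the implicit function theorem, regarding $(\eta, v)$ as a parameter. I would introduce the map $\Phi : X(\R) \times \Adm_N \times \R^N \to \R^{2N}$ with components
$$\Phi_{k, 1}\big((\eta, v), \gc, \ga\big) := \big\langle (\eta, v) - R_{\gc, \ga}, \partial_x Q_{c_k, a_k} \big\rangle_{L^2}, \qquad \Phi_{k, 2}\big((\eta, v), \gc, \ga\big) := P'(Q_{c_k, a_k})\big((\eta, v) - R_{\gc, \ga}\big),$$
for $1 \leq k \leq N$, so that \eqref{cond:staorth} reads $\Phi = 0$. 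At a reference sum of solitons, i.e. when $(\eta, v) = R_{\gc^*, \ga^*}$, $\gc = \gc^*$ and $\ga = \ga^*$, we have $\eps = 0$ and every component of $\Phi$ vanishes. Since $E$, $P$ and the soliton profiles are smooth, $\Phi$ is smooth, and the whole matter reduces to checking that $D_{(\gc, \ga)} \Phi$ is invertible there, with a uniformly bounded inverse.

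The key point is the structure of $D_{(\gc, \ga)} \Phi$ at $\eps = 0$. Using $\partial_{a_k} Q_{c_k, a_k} = - \partial_x Q_{c_k, a_k}$ and $\frac{d}{dc} P(Q_c) = P'(Q_c)(\partial_c Q_c)$, the two diagonal entries of the $k$-th block are
$$\partial_{a_k} \Phi_{k, 1} = \| \partial_x Q_{c_k} \|_{L^2}^2 > 0 \qquad \text{and} \qquad \partial_{c_k} \Phi_{k, 2} = - \frac{d}{dc} \big( P(Q_c) \big)\big|_{c = c_k} \neq 0,$$
the last inequality being exactly the nondegeneracy recorded after Proposition \ref{prop:linstab}. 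The remaining two within-block entries vanish by parity: $Q_c$ is even in $x$, so $\partial_x Q_c$ is odd while $\partial_c Q_c$ is even, whence $\langle \partial_c Q_c, \partial_x Q_c \rangle_{L^2} = 0$; and $P'(Q_c)(\partial_x Q_c) = \frac{1}{2} \int_\R \partial_x(\eta_c v_c) = 0$. Thus each diagonal block is itself diagonal and invertible, with norm bounded uniformly for $\gc^*$ in a compact subset of $\Adm_N$. The off-diagonal blocks pair quantities centered at $a_j$ against test functions centered at $a_k$ for $j \neq k$, so they are controlled by the overlap of two exponentially decaying profiles, hence bounded by $C \exp(- \nu_{\gc^*} |a_j - a_k| / 2) \leq C \exp(- \nu_{\gc^*} L / 2)$. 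Consequently, for $L$ large enough, $D_{(\gc, \ga)} \Phi$ is a small perturbation of an invertible block-diagonal matrix, and is therefore invertible with uniformly bounded inverse.

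With invertibility in hand, I would apply the implicit function theorem near each reference configuration to produce $\boC^1$ maps $\gC$ and $\gA$ solving $\Phi = 0$. Translation invariance of the $L^2$ pairings ensures that $D_{(\gc, \ga)} \Phi$ depends on $\ga$ only through the differences $a_{k + 1} - a_k$, so the size of the neighbourhood on which the theorem applies, together with the resulting Lipschitz constants, are uniform in the overall position of the chain; this allows the local solutions to be assembled into functions defined on all of $\boU_{\gc^*}(\alpha_1, L_1)$, the local uniqueness clause guaranteeing consistency on overlaps. Shrinking $\alpha_1$ and enlarging $L_1$ keeps $\gC$ valued in $\Adm_N$ and $\gA$ in $\Pos_N$. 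Finally, \eqref{est:stamodul1} follows from the Lipschitz estimate of the theorem: at $(\gc, \ga) = (\gc^*, \ga^*)$ each component of $\Phi$ is bounded by $C \| (\eta, v) - R_{\gc^*, \ga^*} \|_X \leq C \alpha$, so the uniform bound on the inverse differential gives $\sum_k |c_k - c_k^*| + \sum_k |a_k - a_k^*| \lesssim \alpha$, and then $\| \eps \|_X \leq \| (\eta, v) - R_{\gc^*, \ga^*} \|_X + \| R_{\gc^*, \ga^*} - R_{\gc, \ga} \|_X \lesssim \alpha$ yields \eqref{est:stamodul1}.

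The main obstacle I anticipate is uniformity: one must run the implicit function theorem on a neighbourhood whose size does not degenerate as the positions $a_k$ escape to infinity, and one must bound the exponentially small off-diagonal coupling uniformly, so that the single constant $K_1$ genuinely depends only on $\gc^*$. The parity cancellations and the exact $2 \times 2$ diagonalisation at $\eps = 0$ are what make the leading-order Jacobian transparent; the rest is a matter of absorbing errors of size $\alpha$ and $\exp(- \nu_{\gc^*} L / 2)$.
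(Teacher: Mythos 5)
Your overall strategy is the same as the paper's: you encode the $2N$ orthogonality conditions as the zero set of a map, check that its Jacobian in $(\gc, \ga)$ at $\eps = 0$ is block-diagonal up to exponentially small off-diagonal terms --- with exactly the two diagonal entries $\| \partial_x Q_{c_k} \|_{L^2}^2$ and $- \frac{d}{dc} ( P(Q_c) )_{|c = c_k} = - (2 - c_k^2)^{1/2}$ and the two parity cancellations you identify --- and you invoke a quantitative implicit function theorem whose constants depend only on lower bounds for $\mu_{\gc}$ and $\nu_{\gc}$. This is precisely what the paper does via Proposition \ref{prop:implicit} of the appendix, and your uniformity discussion matches its hypotheses.

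The one genuine gap is hidden in the sentence ``the local uniqueness clause guaranteeing consistency on overlaps.'' The charts are indexed by base configurations $(\gc^*, \gb)$ with $\gb \in \Pos_N(L_1)$, and a single $(\eta, v) \in \boU_{\gc^*}(\alpha_1, L_1)$ may lie within $2 \alpha_1$ of $R_{\gc^*, \gb}$ and of $R_{\gc^*, \gb_2}$ for two a priori unrelated positions $\gb$ and $\gb_2$. The uniqueness furnished by the implicit function theorem is only valid in a ball $B((\gc^*, \gb), \Lambda \delta)$, so to conclude that the two local solutions coincide you must first show that $|\gb - \gb_2|$ is small whenever $\| R_{\gc^*, \gb} - R_{\gc^*, \gb_2} \|_X$ is small, for well-separated configurations. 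This is not automatic: it is the content of Lemma \ref{lem:diffR} in the paper, proved by an induction on $N$ combined with a compactness (contradiction) argument and the decay estimates \eqref{est:etavc}; already for $N = 1$ one has to rule out that $Q_c(\cdot - a)$ can be $X$-close to $Q_c$ for some $|a| \geq \delta$. Without this ingredient the globally assembled maps $\gC$ and $\gA$ are not shown to be well defined, and hence not shown to be $\boC^1$ on all of $\boU_{\gc^*}(\alpha_1, L_1)$. The remainder of your argument --- the Lipschitz bound giving $\sum_k |c_k - c_k^*| + \sum_k |a_k - a_k^*| \lesssim \alpha$, followed by the triangle inequality $\| \eps \|_X \leq \| (\eta, v) - R_{\gc^*, \ga^*} \|_X + \| R_{\gc^*, \ga^*} - R_{\gc, \ga} \|_X \lesssim \alpha$ --- is exactly the paper's.
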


Increasing $L_1$ and decreasing $\alpha_1$ in the statement of Proposition \ref{prop:stadec} if necessary, one may further infer from \eqref{est:stamodul1} the following facts which we will use throughout. 

\begin{cor}
\label{cor:toutvabien}
Under the assumptions of Proposition \ref{prop:stadec}, whenever $(\eta, v) \in \boU_{\gc^*}(\alpha_1, L)$ with $L \geq L_1$, it follows that $(\eta, v) \in \boN\boV(\R)$, since 
\begin{equation}
\label{eq:ouf3}
1 - \eta \geq \frac{\mu_{\gc^*}^2}{8}.
\end{equation}
Moreover, we have
\begin{equation}
\label{est:stamodul2}
\gA(\eta, v) \in \Pos_N \big( L - 1 \big),
\end{equation}
\begin{equation}
\label{eq:ouf1}
\nu_{\gC(\eta, v)} \geq \frac{\nu_{\gc^*}}{2},
\end{equation}
and
\begin{equation}
\label{eq:ouf2}
\mu_{\gC(\eta, v)} \geq \frac{\mu_{\gc^*}}{2}.
\end{equation}
\end{cor}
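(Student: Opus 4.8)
The plan is to deduce all of \eqref{eq:ouf3}--\eqref{eq:ouf2} from the single bound \eqref{est:stamodul1}, by shrinking $\alpha_1$ and enlarging $L_1$ a finite number of times. First I would unwind the hypothesis: if $(\eta,v)\in\boU_{\gc^*}(\alpha_1,L)$ with $L\ge L_1$, then by definition of $\boU_{\gc^*}(\alpha_1,L)$ there is some $\ga^*\in\Pos_N(L)$ with $\|(\eta,v)-R_{\gc^*,\ga^*}\|_X<\alpha_1$, so Proposition~\ref{prop:stadec} applies with $\alpha=\alpha_1$ and, writing $\gc=\gC(\eta,v)$, $\ga=\gA(\eta,v)$ and $\eps=(\eta,v)-R_{\gc,\ga}$, gives
\[\|\eps\|_X+\sum_{k=1}^N|c_k-c_k^*|+\sum_{k=1}^N|a_k-a_k^*|\le K_1\alpha_1.\]
Every assertion will be a consequence of this inequality, and I would prove them in the order \eqref{eq:ouf1}, \eqref{eq:ouf2}, \eqref{est:stamodul2}, \eqref{eq:ouf3}, since the non-vanishing bound relies on the three preceding ones.

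The speed estimates \eqref{eq:ouf1} and \eqref{eq:ouf2} are immediate from the per-index bound $|c_k-c_k^*|\le K_1\alpha_1$: one has $|c_k|\ge\mu_{\gc^*}-K_1\alpha_1$ and $2-c_k^2\ge\nu_{\gc^*}^2-2\sqrt2\,K_1\alpha_1$, so requiring $K_1\alpha_1\le\mu_{\gc^*}/2$ and $2\sqrt2\,K_1\alpha_1\le3\nu_{\gc^*}^2/4$ yields $\mu_{\gC(\eta,v)}\ge\mu_{\gc^*}/2$ and $\nu_{\gC(\eta,v)}\ge\nu_{\gc^*}/2$ respectively. Similarly, since $\ga^*\in\Pos_N(L)$,
\[a_{k+1}-a_k\ge(a_{k+1}^*-a_k^*)-|a_{k+1}-a_{k+1}^*|-|a_k-a_k^*|>L-2K_1\alpha_1\ge L-1\]
as soon as $2K_1\alpha_1\le1$, which is exactly \eqref{est:stamodul2}.

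The one genuinely analytic point is \eqref{eq:ouf3}, and here I would combine an interaction estimate with a Sobolev embedding. Writing $\eta=\sum_{k}\eta_{c_k}(\cdot-a_k)+\eps_\eta$, the embedding $H^1(\R)\hookrightarrow L^\infty(\R)$ gives $\|\eps_\eta\|_{L^\infty}\le C\|\eps\|_X\le CK_1\alpha_1$. For the sum of profiles I would fix $x$, let $a_j$ be the nearest position, bound the dominant term by its peak value $\eta_{c_j}(x-a_j)\le(2-c_j^2)/2\le1-\mu_\gc^2/2$, and control the remainder using the explicit decay $\eta_c(y)\le2(2-c^2)e^{-\sqrt{2-c^2}|y|}$ read off from \eqref{form:etavc}; with $|x-a_k|\ge\tfrac12|a_k-a_j|\ge\tfrac12|k-j|(L-1)$ coming from \eqref{est:stamodul2} and $\sqrt{2-c_k^2}\ge\nu_\gc\ge\nu_{\gc^*}/2$ from \eqref{eq:ouf1}, a geometric series gives
\[\sum_{k\ne j}\eta_{c_k}(x-a_k)\le16\,e^{-\nu_{\gc^*}(L-1)/4}\]
once $L_1$ is large enough that $e^{-\nu_{\gc^*}(L-1)/4}\le\tfrac12$. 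Hence $1-\eta\ge\mu_\gc^2/2-16e^{-\nu_{\gc^*}(L-1)/4}-CK_1\alpha_1$. The main (and really only) obstacle is to verify that these two error terms do not consume the gap left by the individual peaks: decreasing $\alpha_1$ so that $\mu_\gc\ge\tfrac34\mu_{\gc^*}$, whence $\mu_\gc^2/2\ge\tfrac{9}{32}\mu_{\gc^*}^2$, and then enlarging $L_1$ and shrinking $\alpha_1$ further so that each error term is at most $\tfrac1{32}\mu_{\gc^*}^2$, secures $1-\eta\ge\tfrac7{32}\mu_{\gc^*}^2\ge\mu_{\gc^*}^2/8$, which in particular places $(\eta,v)$ in $\boN\boV(\R)$ and completes the proof.
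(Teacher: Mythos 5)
Your proposal is correct and follows essentially the same route as the paper: all four conclusions are read off from \eqref{est:stamodul1} after finitely many adjustments of $\alpha_1$ and $L_1$, with \eqref{eq:ouf3} obtained exactly as in the paper by combining the pointwise bound $1-\eta_c\geq c^2/2$ at the peak of each soliton, the exponential decay \eqref{est:etavc} to control the cross terms, and the Sobolev embedding to control $\eps_\eta$. The only differences are cosmetic (your explicit geometric-series constant and the $\tfrac{9}{32}-\tfrac{2}{32}$ bookkeeping versus the paper's $\tfrac{1}{4}-\tfrac{1}{8}$ split).
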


\subsubsection{Almost minimizing properties close to a sum of solitons}
\label{sub:decomp}

Given $\gc^* := (c_1^*, \ldots, c_N^*) \in \Adm_N$ and $(\eta, v) \in \boU_{\gc^*}(\alpha_1, L_1)$, we write
$$(\eta, v) = R_{\gc, \ga} + \eps,$$
according to Proposition \ref{prop:stadec}, where $\eps$ satisfies the orthogonality conditions \eqref{cond:staorth}. Our next goal is to show that the sum $R_{\gc, \ga}$ possesses almost minimizing properties. Since each soliton $Q_{c_k ,a_k}$ has its own speed $c_k$, the function $E + c P$, whose coercivity properties were exhibited in \eqref{eq:coercive} in the case of a single soliton $Q_c$, has to be replaced by some function which, close to each location point $a_k$, resembles $E + c_k P$.

More precisely, we define the functions
\footnote{In the definition of $\Psi_k$, one could a priori replace the constant $\nu_{\gc^*}/16$ by an arbitrary positive constant $\nu^*$. In order that the monotonicity formula later stated in Proposition \ref{prop:mono} holds, the choice of $\nu^*$ is actually restricted. Up to a constant multiplicity factor, $\nu_{\gc^*}/16$ is presumably optimal.}
$$\Psi_k(x) := \left\{ \begin{array}{lll}
1 & {\rm if} \quad k = 1,\\[5pt]
\displaystyle \frac{1}{2} \Big( 1 + \th \Big( \frac{\nu_{\gc^*}}{16} \Big( x - \frac{a_{k - 1} + a_k}{2} \Big) \Big) & {\rm if} \quad 2 \leq k \leq N,\\[5pt]
0 & {\rm if} \quad k = N + 1.\\
\end{array} \right.$$
We next set
$$F(\eta, v) := E(\eta, v) + \sum_{k = 1}^N c_k P_k(\eta, v),$$
where
$$P_k(\eta, v) := \int_\R p(\eta, v) \big( \Psi_k - \Psi_{k + 1} \big).$$
Notice that by construction,
$$\sum_{k = 1}^N \big( \Psi_k - \Psi_{k + 1} \big) = \Psi_1 - \Psi_{N + 1} = 1,$$
and that, provided the points $a_k$ are sufficiently far apart one from each other, we have for each $k$, $\Psi_k - \Psi_{k + 1} \simeq 1$ in a neighborhood of $Q_{c_k, a_k}$ and $\Psi_k - \Psi_{k + 1} \simeq 0$ in the neighborhood of $Q_{c_j, a_j}$, for $j \neq k$. Notice also that, even though it does not explicitly appear in the notation, $\Psi_k$ depends on $(\eta, v)$ through the points $a_k$. This will be of particular importance when we introduce time dependence later on. 

In order to estimate the function $F$, we localize the function $\eps$ according to the following decomposition. Let $0 < \tau < \nu_{\gc^*}/16$ to be defined later (see \eqref{eq:capasse}), depending only on $\gc^*$. We set
$$\Phi_k(x) := \frac{1}{2} \Big( \th \Big( \tau \Big( x - a_k + \frac{L_1}{4} \Big) \Big) - \th \Big( \tau \Big( x - a_k - \frac{L_1}{4} \Big) \Big) \Big),$$
for $1 \leq k \leq N$, and
$$\Phi_{k, k + 1} := \left\{ \begin{array}{lll}
\frac{1}{2} \Big(1 - \th \big( \tau \big( x - a_1 + \frac{L_1}{4} \big) \big) \Big) & {\rm if} \quad k = 0,\\[5pt]
\frac{1}{2} \Big( \th \big( \tau \big( x - a_k - \frac{L_1}{4} \big) \big) - \th \big( \tau \big( x - a_{k + 1} + \frac{L_1}{4} \big) \big) \Big) & {\rm if} \quad 1 < k < N,\\[5pt]
\frac{1}{2} \Big( 1 + \th \big( \tau \big( x - a_N - \frac{L_1}{4} \big) \big) \Big) & {\rm if} \quad k = N.
\end{array} \right.$$
Once more, we note that by construction,
\begin{equation}
\label{eq:partition}
\sum_{k = 1}^N \Phi_k + \sum_{k = 0}^N \Phi_{k, k + 1} = 1,
\end{equation}
and that each term in the sum is non-negative. We finally define
\begin{equation}
\label{def:epskbis}
\eps_k := \Phi_k(\cdot + a_k)^\frac{1}{2} \eps(\cdot + a_k),
\end{equation}
for $1 \leq k \leq N$, and
\begin{equation}
\label{def:epsk}
\eps_{k, k + 1} := \Phi_{k, k + 1}^\frac{1}{2} \eps,
\end{equation}
for $0 \leq k \leq N$.

\begin{prop}
\label{prop:inutile}
Assume that $(\eta, v) = R_{\gc, \ga} + \eps \in \boU_{\gc^*}(\alpha_1, L)$, with $L \geq L_1$. Then,
\begin{equation}
\label{eq:decE}
\begin{split}
E(\eta, v) = & \sum_{k = 1}^N E(Q_{c_k}) + \frac{1}{2} \sum_{k = 1}^N E''(Q_{c_k})(\eps_k, \eps_k) + \frac{1}{2} \sum_{k = 0}^N E''(0)(\eps_{k, k + 1}, \eps_{k, k + 1})\\
& + \boO \Big( \Big( \tau + \exp \Big( - \tau \frac{L_1}{2} \Big) \Big) \| \eps \|_X^2 \Big) + \boO \Big( \| \eps \|_X^3 \Big) + \boO \Big( L \exp \Big( - \frac{\nu_{\gc^*} L}{2} \Big) \Big),
\end{split}
\end{equation}
and
\begin{equation}
\label{eq:decp}
\begin{split}
P_k(\eta, v) = & P(Q_{c_k}) + \frac{1}{2} P''(Q_{c_k})(\eps_k, \eps_k) + \frac{1}{2}P_k''(0)(\eps_{k, k + 1}, \eps_{k, k + 1}) + \frac{1}{2}P_k''(0)(\eps_{k - 1, k}, \eps_{k - 1, k})\\
& + \boO \Big( L \exp \Big( - \frac{\nu_{\gc^*} L}{16} \Big) \Big) + \boO \Big( \exp \Big(- \frac{\tau L_1}{2} \Big) \| \eps \|_X^2 \Big).
\end{split}
\end{equation}
\end{prop}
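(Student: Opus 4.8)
The plan is to read off both identities from a single second-order Taylor expansion around $R_{\gc, \ga}$ of the relevant functional, followed by a localisation of the resulting quadratic form through the partition \eqref{eq:partition}. Because $(\eta, v) \in \boU_{\gc^*}(\alpha_1, L)$ with $L \geq L_1$, Corollary \ref{cor:toutvabien} provides the uniform lower bound $1 - \eta \geq \mu_{\gc^*}^2/8$; hence $E$ and the $P_k$ are smooth on a fixed neighbourhood and every coefficient $1/(1 - \eta_R)$ (cf.\ \eqref{eq:Hc}) stays uniformly bounded. Writing $\eps = (\eta, v) - R_{\gc, \ga}$, I would expand
$$E(\eta, v) = E(R_{\gc, \ga}) + E'(R_{\gc, \ga})(\eps) + \tfrac12 E''(R_{\gc, \ga})(\eps, \eps) + \boO \big( \| \eps \|_X^3 \big),$$
the cubic remainder being legitimate since the non-quadratic part of the density in \eqref{def:E} is a smooth function of $(\eta, v)$ with uniformly bounded derivatives on this region. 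For $P_k$ the same expansion holds \emph{without} remainder: the density $p = \tfrac12 \eta v$ is bilinear and, treating the cut-offs $\Psi_k$ as frozen (they are fixed once $\ga = \gA(\eta, v)$ is fixed), $P_k$ is a quadratic polynomial in $\eps$, so $P_k''$ is constant, equal to $P_k''(0)$. This already accounts for the presence of a term $\boO(\| \eps \|_X^3)$ in \eqref{eq:decE} and its absence in \eqref{eq:decp}.

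The constant terms are interaction estimates. By exponential decay of $\eta_{c_k}, v_{c_k}$ and their derivatives, the separation $a_{k+1} - a_k \geq L$, and the bound $\nu_{\gc} \geq \nu_{\gc^*}/2$ from \eqref{eq:ouf1}, one gets $E(R_{\gc, \ga}) = \sum_k E(Q_{c_k}) + \boO(L \exp(- \nu_{\gc^*} L/2))$, the rate $\nu_{\gc^*}/2$ reflecting the genuine profile decay, and $P_k(R_{\gc, \ga}) = P(Q_{c_k}) + \boO(L \exp(- \nu_{\gc^*} L/16))$, where the weaker rate $\nu_{\gc^*}/16$ now comes from the transition scale of the cut-offs $\Psi_k$ against which the neighbouring profiles are tested. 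The key point is that the first-order terms disappear. On the support of $\Phi_k$ one has $R_{\gc, \ga} = Q_{c_k, a_k} + \boO(\exp(- \nu_{\gc^*} L/2))$, so $E'(R_{\gc, \ga})(\Phi_k \eps) = E'(Q_{c_k, a_k})(\Phi_k \eps)$ up to an exponentially small term; the Euler--Lagrange identity \eqref{eq:crit} rewrites this as $- c_k P'(Q_{c_k, a_k})(\Phi_k \eps)$, and since $\Phi_k \equiv 1$ near $a_k$ while $P'(Q_{c_k, a_k})$ pairs $\eps$ against the exponentially localised profile $(v_{c_k, a_k}, \eta_{c_k, a_k})$, the orthogonality condition $P'(Q_{c_k, a_k})(\eps) = 0$ of \eqref{cond:staorth} forces $P'(Q_{c_k, a_k})(\Phi_k \eps)$ to be exponentially small. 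On each gap the same reasoning applies with $Q_{c_k, a_k}$ replaced by the constant state $0$, using $E'(0) = 0$. Summing over \eqref{eq:partition} and using $\| \eps \|_X \leq \alpha_1$ to turn factors $\exp(\cdots) \| \eps \|_X$ into $\exp(\cdots)$, I conclude that $E'(R_{\gc, \ga})(\eps)$ and $P_k'(R_{\gc, \ga})(\eps)$ are entirely swallowed by the error terms.

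It remains to localise the quadratic forms by means of the $L^2$-partition $\sum_k \Phi_k + \sum_k \Phi_{k, k+1} = 1$ of \eqref{eq:partition}, writing each of $E''(R_{\gc, \ga})(\eps, \eps)$ and $P_k''(0)(\eps, \eps)$ as the sum of its contributions against $\Phi_k^{1/2} \eps$ and $\Phi_{k, k+1}^{1/2} \eps$. For the derivative term $\tfrac14 \int (\partial_x \eps_\eta)^2/(1 - \eta_R)$ of $E''$ this redistribution is not exact; differentiating $\Phi^{1/2}$ yields, after the cancellation $\sum 2 \Phi^{1/2} (\Phi^{1/2})' = 0$, the single commutator $\sum \int ((\Phi^{1/2})')^2 \eps_\eta^2/(1 - \eta_R)$, which is $\boO(\tau \| \eps \|_X^2)$ because $\|(\Phi^{1/2})'\|_\infty = \boO(\tau)$ on transition intervals of length $\boO(1/\tau)$ and $\| \eps_\eta \|_\infty \lesssim \| \eps \|_X$, while the overlaps of the $\Phi$-tails, decaying at rate $\tau$, give the $\boO(\exp(- \tau L_1/2) \| \eps \|_X^2)$ contribution. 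On the support of $\Phi_k$ I then replace the coefficients of $E''(R_{\gc, \ga})$ by those of $E''(Q_{c_k, a_k})$ (an exponentially small change) and translate by $a_k$ to recognise, via \eqref{def:epskbis}, the term $E''(Q_{c_k})(\eps_k, \eps_k)$; on the gaps I replace them by those of $E''(0)$ and use \eqref{def:epsk} to recognise $E''(0)(\eps_{k, k+1}, \eps_{k, k+1})$, which proves \eqref{eq:decE}. For $P_k''$ the density $p$ carries no derivative, so the redistribution is exact and no $\boO(\tau)$ term arises; on the support of $\Phi_k$ the weight $\Psi_k - \Psi_{k+1}$ equals $1$ up to an exponentially small error, giving $P''(Q_{c_k})(\eps_k, \eps_k)$, while on the two adjacent gaps $(k-1,k)$ and $(k, k+1)$ it retains its genuine profile, giving exactly $P_k''(0)(\eps_{k-1, k}, \eps_{k-1, k})$ and $P_k''(0)(\eps_{k, k+1}, \eps_{k, k+1})$; all remaining solitons and gaps are annihilated by $\Psi_k - \Psi_{k+1} = \boO(\exp(- \nu_{\gc^*} L/16))$ there. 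This gives \eqref{eq:decp}.

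The main difficulty is not any individual estimate but the consistent bookkeeping of errors across three distinct transition scales --- the slope $\tau$ of the $\Phi$'s, the slope $\nu_{\gc^*}/16$ of the $\Psi$'s, and the profile decay $\nu_{\gc} \geq \nu_{\gc^*}/2$ --- together with the verification that every replacement of $R_{\gc, \ga}$ by a single soliton or by $0$, and every passage from an orthogonality relation $P'(Q_{c_k, a_k})(\eps) = 0$ to its localised version $P'(Q_{c_k, a_k})(\Phi_k \eps)$, costs only an exponentially small factor once integrated against $\eps$. Controlling these localisation errors uniformly for all admissible $\gc$ and $\ga$, and choosing $\tau$ small enough (depending only on $\gc^*$) that the $\boO(\tau)$ commutator does not later spoil the coercivity argument, is the delicate part of the proof.
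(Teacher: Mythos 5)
Your overall architecture --- a second-order Taylor expansion around $R_{\gc,\ga}$, exponential interaction estimates for the zeroth-order terms, and localisation of the quadratic forms through the partition \eqref{eq:partition} with $\boO(\tau\|\eps\|_X^2)$ commutator errors --- coincides with the paper's, and your treatment of the constant terms, of the second-order terms and of the cubic remainder is sound. There is, however, a genuine gap in your handling of the \emph{first-order} terms: you localise $E'(R_{\gc,\ga})(\eps)$ with the partition $\{\Phi_k,\Phi_{k,k+1}\}$ \emph{before} invoking \eqref{eq:crit} and \eqref{cond:staorth}. The cut-offs $\Phi_k$ are positive on all of $\R$ and decay only at the rate $2\tau\ll\nu_{\gc^*}$, so the claim that $R_{\gc,\ga}=Q_{c_k,a_k}+\boO(e^{-\nu_{\gc^*}L/2})$ ``on the support of $\Phi_k$'' fails on the tails: the replacement error $E'(R_{\gc,\ga})(\Phi_k\eps)-E'(Q_{c_k,a_k})(\Phi_k\eps)$ is only $\boO(e^{-2\tau(L-L_1/4)}\|\eps\|_X)$, the gap replacement $E'(R_{\gc,\ga})(\Phi_{k,k+1}\eps)-E'(0)(\Phi_{k,k+1}\eps)$ is only $\boO(e^{-\nu_{\gc^*}L_1/8}\|\eps\|_X)$ (the density $e'(R_{\gc,\ga})$ is of size $e^{-\nu_{\gc^*}L_1/8}$ at the edges of the gap, where $\Phi_{k,k+1}$ is of order one and no orthogonality is available), and the passage from $P'(Q_{c_k,a_k})(\Phi_k\eps)$ to $P'(Q_{c_k,a_k})(\eps)=0$ costs $\boO(L_1^{1/2}e^{-\tau L_1/2}\|\eps\|_X)$. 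All of these are \emph{linear} in $\|\eps\|_X$ with coefficients that do not decay in $L$ (only in $L_1$, and at the slow rate $\tau$ or worse). None of them is dominated by the right-hand side of \eqref{eq:decE}: they are not $\boO((\tau+e^{-\tau L_1/2})\|\eps\|_X^2)$ because they are linear in $\|\eps\|_X$, and after using $\|\eps\|_X\leq K_1\alpha_1$ they become fixed constants, which are not $\boO(Le^{-\nu_{\gc^*}L/2})$ since $L$ may be arbitrarily large. A fixed additive constant in \eqref{eq:decE} would later destroy the coercivity step yielding $\|\eps(\cdot,t)\|_X^2\leq\boO(\|\eps(\cdot,0)\|_X^2)+\boO(Le^{-\nu_{\gc^*}L/16})$. (The sum of your localised pieces is of course still $E'(R_{\gc,\ga})(\eps)$, which \emph{is} small; the point is that your piecewise estimates are too lossy to see it, because the cancellation is only visible globally.)

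The cure is not to localise the first-order term at all. Since $e'(R_{\gc,\ga})-\sum_k e'(Q_{c_k,a_k})$ consists of products of \emph{distinct} solitons, each decaying at the profile rate $\nu_{\gc}\geq\nu_{\gc^*}/2$ from its own centre, Lemma \ref{lem:identite} with $p=2$ gives
$$E'(R_{\gc,\ga})(\eps)=\sum_{k=1}^N E'(Q_{c_k,a_k})(\eps)+\boO\Big(L^{\frac12}\exp\Big(-\frac{\nu_{\gc^*}L}{2}\Big)\|\eps\|_X\Big),$$
with the fast rate $\nu_{\gc^*}/2$ in the true separation $L$; each $E'(Q_{c_k,a_k})(\eps)$ then vanishes \emph{exactly}, since $E'(Q_{c_k,a_k})(\eps)=\big(E'(Q_{c_k,a_k})+c_kP'(Q_{c_k,a_k})\big)(\eps)=0$ by \eqref{eq:crit} and \eqref{cond:staorth}, and the remaining error is $\boO(Le^{-\nu_{\gc^*}L/2})$, which fits in \eqref{eq:decE}. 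The same global argument handles $P_k'(R_{\gc,\ga})(\eps)$, the rate $\nu_{\gc^*}/16$ now coming from the weights $\Psi_k$. The partition \eqref{eq:partition} should be reserved for the second-order terms, where every localisation error carries a factor $\|\eps\|_X^2$ and is therefore absorbable, exactly as in the rest of your plan.
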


\begin{remark}
Here and in the sequel, we have found convenient to simplify the presentation of our estimates by introducing the notation $\boO$. We adopt as a definition that one is allowed to substitute a quantity $S$ by the notation $\boO(T)$ if and only if there exists a positive constant $K$, {\bf depending only on $\gc^*$}, such that
$$|S| \leq K T.$$
\end{remark}

Later on, we will analyze the evolution in time of the evaluation of $F$ along a solution $(\eta, v)$ of \eqref{HGP}. The analysis is complicated by the fact that the speeds $c_k$ are themselves functions of $(\eta, v)$. To circumvent this difficulty, it turns out that it is sufficient to consider instead of $F$ the function $G$ defined on $\boU_{\gc^*}(\alpha_1, L_1)$ by
$$G(\eta, v) := E(\eta, v) + \sum_{k = 1}^N c_k^* P_k(\eta, v).$$
Notice here that the speeds are the reference speeds $c_k^*$. Concerning the functional $G$, we have the following consequence of Proposition \ref{prop:inutile}.

\begin{cor}
\label{cor:pourG}
Assume that $(\eta, v) = R_{\gc, \ga} + \eps \in \boU_{\gc^*}(\alpha_1, L)$, with $L \geq L_1$. Then,
\begin{align*}
G(\eta, v) & = \sum_{k = 1}^N \big( E(Q_{c_k^*}) + c_k^* P(Q_{c_k^*}) \big) + \frac{1}{2} \sum_{k = 1}^N H_{c_k}(\eps_k) + \frac{1}{2} \sum_{k = 0}^N H_{0}^k(\eps_{k, k + 1}) + \boO \Big( \sum_{k = 1}^N |c_k - c_k^*|^2 \Big)\\
& + \boO \Big( \Big( \tau + \exp \Big( - \frac{\tau L_1}{2} \Big) \Big) \| \eps \|_X^2 \Big) + \boO \Big( \| \eps \|_X^3 \Big) + \boO \Big( L \exp \Big( - \frac{\nu_{\gc^*} L}{16} \Big) \Big),
\end{align*}
where we have set
\begin{align*}
& H_0^0(\eps_{0, 1}) = \big( E''(0) + c_1 P_1''(0) \big)(\eps_{0, 1}, \eps_{0, 1}),\\[5pt]
& H_0^k(\eps_{k, k + 1}) = \big( E''(0)+ c_k P_k''(0) + c_{k + 1} P_{k + 1}''(0) \big)(\eps_{k, k + 1},\eps_{k, k + 1}), \quad \forall 1 \leq k \leq N - 1,\\[5pt]
& H_0^N(\eps_{N, N + 1}) = \big( E''(0) + c_N P_N''(0) \big)(\eps_{N, N + 1}, \eps_{N, N + 1}).
\end{align*}
\end{cor}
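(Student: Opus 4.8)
The plan is to substitute the two expansions \eqref{eq:decE} and \eqref{eq:decp} of Proposition \ref{prop:inutile} directly into the definition $G(\eta, v) = E(\eta, v) + \sum_{k = 1}^N c_k^* P_k(\eta, v)$, and then reorganize the outcome into the three families appearing in the statement: the constants, the quadratic forms in the $\eps_k$, and the quadratic forms in the interstitial pieces $\eps_{k, k + 1}$. Since $N$ depends only on $\gc^*$ and each $|c_k^*| < \sqrt{2}$ is bounded, multiplying the error terms of Proposition \ref{prop:inutile} by $c_k^*$ and summing over $k$ preserves the $\boO$ notation. The error $\boO(L \exp(- \nu_{\gc^*} L / 2))$ coming from \eqref{eq:decE} is dominated by $\boO(L \exp(- \nu_{\gc^*} L / 16))$, and the coefficients $\tau + \exp(- \tau L_1 / 2)$ and $\exp(- \tau L_1 / 2)$ in front of $\|\eps\|_X^2$ combine into $\tau + \exp(- \tau L_1 / 2)$. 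Hence the only genuine work lies in converting the constant and quadratic \emph{main} terms into the claimed form.

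For the constants, I would collect $\sum_{k = 1}^N \big( E(Q_{c_k}) + c_k^* P(Q_{c_k}) \big)$ and Taylor-expand each summand around $c_k^*$. Setting $f_k(c) := E(Q_c) + c_k^* P(Q_c)$, the identity $\frac{d}{dc}(E(Q_c)) = - c \frac{d}{dc}(P(Q_c))$ recorded before the statement gives $f_k'(c) = (c_k^* - c) \frac{d}{dc}(P(Q_c))$, so that $f_k'(c_k^*) = 0$; as $\frac{d}{dc}P(Q_c)$ and $\frac{d^2}{dc^2}P(Q_c)$ are bounded for $c$ near $c_k^*$ by the explicit value \eqref{def:Pc}, a second-order expansion yields $E(Q_{c_k}) + c_k^* P(Q_{c_k}) = E(Q_{c_k^*}) + c_k^* P(Q_{c_k^*}) + \boO(|c_k - c_k^*|^2)$. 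This produces exactly the leading constant and the error $\boO(\sum_k |c_k - c_k^*|^2)$. The vanishing of $f_k'(c_k^*)$ is the one conceptually essential point, since otherwise the constant error would be merely linear in $c_k - c_k^*$ and out of reach.

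For the quadratic $\eps_k$ terms I would group $\frac{1}{2} E''(Q_{c_k}) + \frac{1}{2} c_k^* P''(Q_{c_k})$ and write $c_k^* = c_k + (c_k^* - c_k)$ to recognize $H_{c_k} = E''(Q_{c_k}) + c_k P''(Q_{c_k})$, leaving the remainder $\frac{1}{2}(c_k^* - c_k) P''(Q_{c_k})(\eps_k, \eps_k)$. Since $P$ is exactly quadratic, $P''$ is the constant bilinear form $(\eps, \eps) \mapsto \int_\R \eps_\eta \eps_v$, whence $|P''(Q_{c_k})(\eps_k, \eps_k)| \lesssim \|\eps_k\|_X^2 \lesssim \|\eps\|_X^2$ and the remainder is $\boO(|c_k - c_k^*| \, \|\eps\|_X^2)$. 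The interstitial terms are treated in the same spirit: reindexing by the gap $j$, each $\eps_{j, j + 1}$ collects its $E''(0)$-contribution from \eqref{eq:decE} and its $P''$-contributions from $P_j$ and $P_{j + 1}$, with the obvious degenerations at $j = 0$ and $j = N$, assembling $\frac{1}{2}\big( E''(0) + c_j^* P_j''(0) + c_{j + 1}^* P_{j + 1}''(0) \big)(\eps_{j, j + 1}, \eps_{j, j + 1})$; replacing each $c^*$ by the corresponding $c$ turns this into $\frac{1}{2} H_0^j(\eps_{j, j + 1})$ plus a remainder $\boO(|c_j - c_j^*| \, \|\eps\|_X^2)$.

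Finally I would dispose of every remainder of the form $|c_k - c_k^*| \, \|\eps\|_X^2$ by Young's inequality, $|c_k - c_k^*| \, \|\eps\|_X^2 \leq \frac{1}{2} |c_k - c_k^*|^2 + \frac{1}{2} \|\eps\|_X^4$, and then use $\|\eps\|_X \leq \alpha_1$ to bound $\|\eps\|_X^4 \lesssim \|\eps\|_X^3$; this splits each such remainder into the admissible errors $\boO(\sum_k |c_k - c_k^*|^2)$ and $\boO(\|\eps\|_X^3)$ already present in the statement. Collecting all contributions then gives the claimed identity. I expect no serious obstacle beyond the careful bookkeeping of the interstitial indices; the single substantive ingredient is the Pohozaev-type cancellation that upgrades the constant error from first to second order in $c_k - c_k^*$.
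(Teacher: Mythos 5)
Your proposal is correct and follows essentially the same route as the paper: substitute the expansions of Proposition \ref{prop:inutile} into $G$, kill the first-order term in $c_k - c_k^*$ in the constants via the Euler--Lagrange relation (your scalar identity $\frac{d}{dc}E(Q_c) = -c\,\frac{d}{dc}P(Q_c)$ is exactly the paper's $E'(Q_{c_k^*}) + c_k^* P'(Q_{c_k^*}) = 0$ composed with $\partial_c Q_c$), and absorb the remainders $(c_k^*-c_k)P''(\cdot)(\eps,\eps)$ by the same Young-type splitting into $\boO(|c_k-c_k^*|^2) + \boO(\|\eps\|_X^4)$. The interstitial bookkeeping you describe matches the paper's definition of $H_0^j$.
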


It is not possible to invoke Proposition \ref{prop:linstab} directly to obtain a lower bound on the terms $H_{c_k}(\eps_k)$ since $\eps_k$ only satisfies the orthogonality conditions \eqref{cond:orthc} asymptotically as $L_1 \to + \infty$. We obtain

\begin{lemma}
\label{lem:coerapp}
Under the assumptions of Corollary \ref{cor:pourG}, there exists a positive constant $\Lambda^*$,
depending only on $\gc^*$, such that
\begin{equation}
\label{eq:miniepsk}
H_{c_k}(\eps_k) \geq \Lambda^* \|\eps_k \|_X^2 + \boO \Big( L_1^\frac{1}{2} \exp \Big( - \frac{\tau L_1}{2} \Big) \| \eps \|_X^2 \Big),
\end{equation}
for any $1 \leq k \leq N$, and such that
\begin{equation}
\label{eq:miniepskbis}
H_0^k(\eps_{k, k + 1}) \geq \Lambda^* \| \eps_{k, k + 1} \|_X^2, 
\end{equation}
for any $0 \leq k \leq N$. Moreover,
\begin{equation}
\label{eq:equivnorm}
\sum_{k = 1}^N \| \eps_k \|_X^2 + \sum_{k = 0}^N \| \eps_{k, k + 1} \|_X^2 \geq \| \eps \|_X^2.
\end{equation}
\end{lemma}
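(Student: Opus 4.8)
The plan is to establish the three assertions by separate mechanisms. The norm comparison \eqref{eq:equivnorm} is a direct consequence of the partition of unity \eqref{eq:partition}; the coercivity \eqref{eq:miniepskbis} of the background forms $H_0^k$ reduces to a pointwise positivity that uses only the subsonic bound $|c_k| < \sqrt{2}$; and the near-coercivity \eqref{eq:miniepsk} of the soliton forms $H_{c_k}$ is obtained by transferring Proposition \ref{prop:linstab} to the localized $\eps_k$, which satisfy the orthogonality conditions \eqref{cond:orthc} only up to exponentially small deficits. I expect this last step to be the main obstacle.

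For \eqref{eq:equivnorm} I would start with the $L^2$ parts. Since $(\Phi^\frac12)^2 = \Phi$ and the $L^2$ norm is translation invariant, the localizations \eqref{def:epskbis}--\eqref{def:epsk} split the $L^2$ norms of $\eps_\eta$ and of $\eps_v$ exactly, by \eqref{eq:partition}. For the remaining $\partial_x \eps_\eta$, the recentering does not affect the $L^2$ norm of a derivative, so I would expand $\| \partial_x(\Phi^\frac12 \eps_\eta) \|_{L^2}^2 = \int \Phi (\partial_x \eps_\eta)^2 + \int (\partial_x \Phi)\, \eps_\eta\, \partial_x \eps_\eta + \int (\partial_x \Phi^\frac12)^2 \eps_\eta^2$, using $2 \Phi^\frac12 \partial_x \Phi^\frac12 = \partial_x \Phi$. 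Summing over all the cutoffs $\Phi_k$ and $\Phi_{k, k+1}$, the first term gives $\int (\partial_x \eps_\eta)^2$ by \eqref{eq:partition}, the cross term vanishes because $\sum \partial_x \Phi = \partial_x(1) = 0$, and the last term is a sum of squares, hence nonnegative. This yields \eqref{eq:equivnorm}.

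For \eqref{eq:miniepskbis} I would first record the explicit forms obtained by expanding \eqref{def:E} and $p = \frac12 \eta v$ at the constant state: $E''(0)(\eps, \eps) = \frac14 \int (\partial_x \eps_\eta)^2 + \frac12 \int \eps_\eta^2 + \int \eps_v^2$ and $P_k''(0)(\eps, \eps) = \int \eps_\eta \eps_v (\Psi_k - \Psi_{k+1})$. Hence each $H_0^k(\eps_{k, k+1})$ has the shape $\int [\frac14 (\partial_x w_\eta)^2 + \frac12 w_\eta^2 + w_v^2 + \omega\, w_\eta w_v]$ with $w = \eps_{k, k+1}$ and a weight $\omega$ which, in every case, is a combination of the neighbouring speeds: $\omega = c_k(\Psi_k - \Psi_{k+1}) + c_{k+1}(\Psi_{k+1} - \Psi_{k+2})$ for $1 \le k \le N-1$, and $\omega = c_1(\Psi_1 - \Psi_2)$, resp. $\omega = c_N(\Psi_N - \Psi_{N+1})$, for $k = 0$, resp. $k = N$. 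Since the $\Psi_j$ are nondecreasing with ordered transition points, one has $\Psi_j \ge \Psi_{j+1}$ pointwise, so the coefficients are nonnegative with total mass at most $1$; hence $|\omega| \le \max_j |c_j| < \sqrt2$ pointwise. The pointwise form in $(w_\eta, w_v)$ then has matrix $\bigl( \begin{smallmatrix} 1/2 & \omega/2 \\ \omega/2 & 1 \end{smallmatrix} \bigr)$ with determinant $\frac12 - \frac{\omega^2}{4} \ge \frac12 - \frac14 \max_j c_j^2 > 0$, uniformly, since by Corollary \ref{cor:toutvabien} the speeds stay in a fixed compact subset of $(-\sqrt2, \sqrt2)$. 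Adding back $\frac14 \int (\partial_x w_\eta)^2$ gives \eqref{eq:miniepskbis}.

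The delicate point is \eqref{eq:miniepsk}. Although $\eps$ satisfies $\langle \eps, \partial_x Q_{c_k, a_k} \rangle_{L^2} = P'(Q_{c_k, a_k})(\eps) = 0$, after localizing and recentering the quantities $\langle \eps_k, \partial_x Q_{c_k} \rangle_{L^2}$ and $P'(Q_{c_k})(\eps_k)$ become integrals of $(\Phi_k^\frac12 - 1)\eps$ against the exponentially localized $\partial_x Q_{c_k}$ and the $L^2$-representative of $P'(Q_{c_k})$. On the core region $|x - a_k| \lesssim L_1/4$, where these localized factors concentrate, one has $1 - \Phi_k^\frac12 = \boO(\exp(-\tau L_1/2))$, while the tail of $\partial_x Q_{c_k}$ beyond that region is negligible thanks to its faster decay rate $\sqrt{2 - c_k^2} \gg \tau$; Cauchy--Schwarz then bounds both deficits by $\boO(\exp(-\tau L_1/2)\|\eps\|_X)$. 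I would next correct $\eps_k$ into $\tilde\eps_k := \eps_k - \lambda\, \partial_x Q_{c_k} - \mu\, \partial_c Q_{c_k}$, choosing $(\lambda, \mu)$ so that $\tilde\eps_k$ satisfies \eqref{cond:orthc} exactly; the associated $2 \times 2$ system is uniformly invertible (as in the Remark after Proposition \ref{prop:linstab}, using $\frac{d}{dc} P(Q_c) \ne 0$), whence $|\lambda| + |\mu| = \boO(\exp(-\tau L_1/2)\|\eps\|_X)$. Proposition \ref{prop:linstab} applies to $\tilde\eps_k$, and expanding $H_{c_k}(\eps_k) = H_{c_k}(\tilde\eps_k) + \boO\bigl( (|\lambda| + |\mu|)\|\eps_k\|_X + (|\lambda| + |\mu|)^2 \bigr)$ via boundedness of the bilinear form, together with $\|\eps_k\|_X = \boO(\|\eps\|_X)$ from the preceding paragraph, yields \eqref{eq:miniepsk} (the factor $L_1^\frac12$ there being a harmless over-estimate). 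The main obstacle is precisely the careful bookkeeping of these deficit estimates and the uniform control of the correction across the compact range of speeds; the rest is a bounded bilinear-form perturbation.
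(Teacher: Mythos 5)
Your proposal is correct and follows essentially the same route as the paper: the telescoping/partition-of-unity computation for \eqref{eq:equivnorm}, the pointwise bound $|c_k(\Psi_k-\Psi_{k+1})+c_{k+1}(\Psi_{k+1}-\Psi_{k+2})|\leq\max\{|c_k|,|c_{k+1}|\}<\sqrt{2}$ combined with the non-negativity of $E''(0)-\sqrt{2}\,P''(0)$ for \eqref{eq:miniepskbis}, and, for \eqref{eq:miniepsk}, an exponentially small correction of $\eps_k$ restoring the orthogonality conditions \eqref{cond:orthc} followed by Proposition \ref{prop:linstab} and a bounded-bilinear-form perturbation. The only (cosmetic) difference is that you correct along $\partial_x Q_{c_k}$ and $\partial_c Q_{c_k}$ via a $2\times 2$ system, whereas the paper projects orthogonally onto $\partial_x Q_{c_k}$ and the $L^2$-representative $(v_{c_k},\eta_{c_k})$ of $P'(Q_{c_k})$ and exploits that $\partial_x Q_{c_k}$ lies in the kernel of $H_{c_k}$; both yield the same deficit estimates and the same conclusion.
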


In view of Corollary \ref{cor:pourG} and Lemma \ref{lem:coerapp}, we now fix the value of $\tau$, sufficiently small, and then increase the value of $L_1$, if necessary, in such a way that
\footnote{By the left-hand side of \eqref{eq:capasse}, we actually mean all the terms which we have previously estimated in Corollary \ref{cor:pourG} and Lemma \ref{lem:coerapp} as the corresponding $\boO$.} 
\begin{equation}
\label{eq:capasse}
\boO \Big( \Big( \tau + L_1^\frac{1}{2} \exp \Big( - \frac{\tau L_1}{2} \Big) \Big) \| \eps \|_X^2 \Big) \leq \frac{\Lambda^*}{4} \| \eps \|_X^2.
\end{equation}
With these choices we obtain 

\begin{cor}
\label{cor:estG}
Assume that $(\eta, v) = R_{\gc, \ga} + \eps \in \boU_{\gc^*}(\alpha_1, L)$, with $L \geq L_1$. Then,
\begin{equation}
\label{eq:lowerG}
\begin{split}
G(\eta, v) & \geq \sum_{k = 1}^N \big( E(Q_{c_k^*}) + c_k^* P(Q_{c_k^*}) \big) + \frac{\Lambda^*}{4} \| \eps \|_X^2 + \boO \Big( \| \eps \|_X^3 \Big) + \boO \Big( \sum_{k = 1}^N |c_k - c_k^*|^2 \Big)\\
& + \boO \Big( L \exp \Big( - \frac{\nu_{\gc^*} L}{16} \Big) \Big),
\end{split}
\end{equation} 
as well as
\begin{equation}
\label{eq:upperG}
G(\eta, v) \leq \sum_{k = 1}^N \big( E(Q_{c_k^*}) + c_k^* P(Q_{c_k^*}) \big) + \boO \Big( \| \eps \|_X^2 \Big) + \boO \Big( \sum_{k = 1}^N |c_k - c_k^*|^2 \Big) + \boO \Big( L \exp \Big( - \frac{\nu_{\gc^*} L}{16} \Big) \Big).
\end{equation}
\end{cor}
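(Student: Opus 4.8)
The plan is to read off both inequalities directly from the expansion of $G$ furnished by Corollary \ref{cor:pourG}, by inserting the coercivity estimates of Lemma \ref{lem:coerapp} to handle the lower bound, using only crude continuity bounds on the quadratic forms for the upper bound, and then collecting the various error terms by means of the calibration \eqref{eq:capasse}. No new analytic input is required: the substance has already been established in Corollary \ref{cor:pourG} and Lemma \ref{lem:coerapp}, and what remains is essentially bookkeeping of the $\boO$-terms.

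For the lower bound \eqref{eq:lowerG}, I would start from the identity in Corollary \ref{cor:pourG} and bound its quadratic contribution from below. Applying \eqref{eq:miniepsk} to each $H_{c_k}(\eps_k)$ and \eqref{eq:miniepskbis} to each $H_0^k(\eps_{k, k + 1})$ yields
$$\frac{1}{2} \sum_{k = 1}^N H_{c_k}(\eps_k) + \frac{1}{2} \sum_{k = 0}^N H_0^k(\eps_{k, k + 1}) \geq \frac{\Lambda^*}{2} \Big( \sum_{k = 1}^N \| \eps_k \|_X^2 + \sum_{k = 0}^N \| \eps_{k, k + 1} \|_X^2 \Big) + \boO \Big( L_1^\frac{1}{2} \exp \Big( - \frac{\tau L_1}{2} \Big) \| \eps \|_X^2 \Big).$$
The norm equivalence \eqref{eq:equivnorm} then replaces the bracketed sum by $\| \eps \|_X^2$, producing a leading coercive contribution $\frac{\Lambda^*}{2} \| \eps \|_X^2$. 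It remains to absorb the errors: the term $\boO((\tau + \exp(-\tau L_1/2)) \| \eps \|_X^2)$ already present in Corollary \ref{cor:pourG}, together with the $\boO(L_1^{1/2} \exp(-\tau L_1/2) \| \eps \|_X^2)$ just produced, are exactly the quantities controlled by the choice \eqref{eq:capasse} of $\tau$ and $L_1$, hence together at most $\frac{\Lambda^*}{4} \| \eps \|_X^2$. Subtracting leaves the coercive term $\frac{\Lambda^*}{4} \| \eps \|_X^2$, while the $\boO(\| \eps \|_X^3)$, $\boO(\sum_k |c_k - c_k^*|^2)$ and $\boO(L \exp(-\nu_{\gc^*} L/16))$ terms are carried over unchanged, which is precisely \eqref{eq:lowerG}.

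For the upper bound \eqref{eq:upperG}, I would instead bound the quadratic forms from above. By Corollary \ref{cor:toutvabien} the speeds $c_k$ stay in a fixed compact subset of $(-\sqrt{2}, \sqrt{2}) \setminus \{0\}$, so in view of \eqref{eq:Hc} the forms $H_{c_k}$ and $H_0^k$ are continuous on $X(\R)$ with operator norms bounded uniformly in terms of $\gc^*$; moreover the localizations satisfy $\| \eps_k \|_X + \| \eps_{k, k + 1} \|_X = \boO(\| \eps \|_X)$, since multiplication by the smooth cut-offs $\Phi_k^{1/2}$ and $\Phi_{k, k + 1}^{1/2}$ is bounded on $H^1$ and $L^2$ with constant depending only on $\tau$ (hence on $\gc^*$). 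Consequently $\frac{1}{2} \sum_k H_{c_k}(\eps_k) + \frac{1}{2} \sum_k H_0^k(\eps_{k, k + 1}) = \boO(\| \eps \|_X^2)$. Feeding this into Corollary \ref{cor:pourG}, and using that $\| \eps \|_X \leq \alpha_1$ makes the cubic term $\boO(\| \eps \|_X^3) = \boO(\| \eps \|_X^2)$, all contributions quadratic in $\eps$ collapse into a single $\boO(\| \eps \|_X^2)$, which gives \eqref{eq:upperG}.

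The only point that genuinely requires attention is the matching of the error terms in the lower bound: one must verify that the specific quantities generated by Lemma \ref{lem:coerapp} and by Corollary \ref{cor:pourG} are exactly those calibrated in \eqref{eq:capasse}, so that the coercivity constant $\Lambda^*/2$ is merely degraded to $\Lambda^*/4$ rather than being destroyed. Equally, one must keep the terms $\boO(\sum_k |c_k - c_k^*|^2)$ and $\boO(L \exp(-\nu_{\gc^*} L/16))$ intact throughout, as these will be controlled separately — the former by the modulation estimates, the latter by the large separation $L$ — in the subsequent stages of the argument.
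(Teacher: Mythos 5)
Your proposal is correct and follows essentially the same route as the paper: the lower bound is obtained by combining Corollary \ref{cor:pourG} with Lemma \ref{lem:coerapp} and absorbing the error terms via the calibration \eqref{eq:capasse}, and the upper bound by the continuity of the quadratic forms $H_{c_k}$ and $H_0^k$ with constants controlled through Corollary \ref{cor:toutvabien}. The only difference is that you spell out the bookkeeping (the factor $\Lambda^*/2$ degrading to $\Lambda^*/4$, the boundedness of the cut-off multiplications) which the paper leaves implicit.
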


Notice that, up to this point, the entire analysis has been independent of time. In the next section, we combine the previous results with information extracted from equation \eqref{HGP}.

\subsubsection{Dynamics of the modulation parameters}
\label{sub:evol}

In this subsection, as well as in Subsections \ref{sub:mono} and \ref{sub:conteps} below, $\gc^* \in \Adm_N$ is fixed and we assume that $T > 0$, and $(\eta, v) \in \boC^0([0, T], X(\R))$ is a given solution to equation \eqref{HGP} such that for some $\alpha \leq \alpha_1$ and $L \geq L_1$,
$$\big( \eta(\cdot, t), v(\cdot, t) \big) \in \boU_{\gc^*}(\alpha, L),$$
for any $t \in [0, T]$.

For $t \in [0, T]$, we define
$$\gc(t) = (c_1(t), \ldots, c_N(t)) := \gC \big( v(\cdot, t), \eta(\cdot, t) \big) \quad {\rm and} \quad \ga(t) = (a_1(t), \ldots, a_N(t)) := \gA \big( v(\cdot, t), \eta(\cdot, t) \big),$$
and
$$\eps(\cdot,t) := \big( \eta(\cdot, t), v(\cdot, t) \big) - R_{\gc(t), \ga(t)},$$
where the functions $\gC$ and $\gA$ are given by Proposition \ref{prop:stadec}. 

The following proposition expresses the fact that the modulation parameters should follow those of the underlying solitons, as long as the solution remains close to them. 

\begin{prop}
\label{prop:modul}
There exist positive numbers $\alpha_2 \leq \alpha_1$ and $L_2 \geq L_1$, depending only on $\gc^*$, such that, if $\alpha \leq \alpha_2$ and $L \geq L_2$, then $\gc \in \boC^1([0, T], \Adm_N)$, $\ga \in \boC^1([0, T], \R^N)$, and we have the estimates 
\begin{equation}
\label{est:modul2}
\sum_{k = 1}^N \Big( |a_k'(t) - c_k(t)| + |c_k'(t)| \Big) = \boO \Big( \| \eps(\cdot, t)\|_X \Big) + \boO \Big( L \exp \Big( - \frac{\nu_{\gc^*} L}{2} \Big) \Big),
\end{equation}
for any $t \in [0, T]$.
\end{prop}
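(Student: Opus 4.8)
The plan is to exploit the Hamiltonian structure of \eqref{HGP}. Writing $U=(\eta,v)$ and $\mathcal{J}:=2\partial_x\left(\begin{smallmatrix}0&1\\1&0\end{smallmatrix}\right)$, a direct computation shows that \eqref{HGP} reads $\partial_t U=\mathcal{J}E'(U)$, and yields the two structural identities $\mathcal{J}P'(U)=\partial_x U$ and, from \eqref{eq:crit}, $\mathcal{J}E'(Q_{c_k,a_k})=-c_k\,\partial_x Q_{c_k,a_k}$; moreover $\mathcal{J}$ is skew-adjoint for the $L^2$ pairing. I would first settle the $\boC^1$ regularity of the modulation parameters. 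By Proposition \ref{prop:stadec} the maps $\gC,\gA$ are $\boC^1$, so $\gc,\ga$ are at least continuous in $t$; to upgrade this I would apply the implicit function theorem to the map $\Upsilon(\gc,\ga,t):=\big(\langle U(\cdot,t)-R_{\gc,\ga},\partial_x Q_{c_k,a_k}\rangle_{L^2},\,P'(Q_{c_k,a_k})(U(\cdot,t)-R_{\gc,\ga})\big)_{1\le k\le N}$, which vanishes at $(\gc(t),\ga(t),t)$ and whose differential in $(\gc,\ga)$ is the invertible matrix analyzed below. The key point is that, although $U$ is only $\boC^0([0,T],X)$, the test functions $\partial_x Q_{c_k,a_k}$ and $P'(Q_{c_k,a_k})$ are smooth and fast decaying, so that $t\mapsto\langle U(\cdot,t),\phi\rangle$ is $\boC^1$ with derivative $\langle\mathcal{J}E'(U),\phi\rangle=-\langle E'(U),\mathcal{J}\phi\rangle$ after integrating the derivatives in $\mathcal{J}$ onto $\phi$ (here Corollary \ref{cor:toutvabien} keeps $1-\eta$ bounded below so that $E'(U)$ pairs continuously against smooth functions). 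Hence $\Upsilon$ is jointly $\boC^1$ and the conclusion $\gc,\ga\in\boC^1$ follows.

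Next I would differentiate the orthogonality conditions \eqref{cond:staorth} in time. Inserting
$$\partial_t\eps=\mathcal{J}E'(U)-\sum_{j=1}^N\big(c_j'\,\partial_c Q_{c_j,a_j}-a_j'\,\partial_x Q_{c_j,a_j}\big)$$
into the time derivatives of the $2N$ identities in \eqref{cond:staorth} produces a linear system
$$M(t)\,Y(t)=S(t),\qquad Y:=\big(a_k'-c_k,\,c_k'\big)_{1\le k\le N},$$
where $M$ collects the pairings of $\partial_c Q_{c_j,a_j}$ and $\partial_x Q_{c_j,a_j}$ against $\partial_x Q_{c_k,a_k}$ and $P'(Q_{c_k,a_k})$, while $S$ gathers the source terms $\langle\mathcal{J}E'(U),\cdot\rangle$ together with the corrections $\langle\eps,\partial_t\partial_x Q_{c_k,a_k}\rangle$ and $\langle\eps,\partial_t P'(Q_{c_k,a_k})\rangle$, the latter being $\boO(\|\eps\|_X)$ since $a_k'=\boO(1)$ and $c_k'$ is itself an unknown.

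To invert $M$, I would use the parity of the solitons: since $\eta_c$ and $v_c$ are even, the vectors $Q_c$, $P'(Q_c)$ and $\partial_c Q_c$ are even while $\partial_x Q_c$ is odd, whence the diagonal cancellations $\langle\partial_c Q_c,\partial_x Q_c\rangle=\langle P'(Q_c),\partial_x Q_c\rangle=0$, together with $\langle\partial_x Q_c,\partial_x Q_c\rangle=\|\partial_x Q_c\|_{L^2}^2>0$ and $\langle\partial_c Q_c,P'(Q_c)\rangle=\tfrac{d}{dc}P(Q_c)\neq0$ (the latter by \eqref{def:Pc}). Thus the leading part $M_0$ of $M$ is block diagonal with invertible $2\times2$ blocks, decoupling at leading order the equation for $a_k'-c_k$ (first orthogonality condition) from that for $c_k'$ (second one). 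The off-diagonal entries involve pairings of solitons centred at points at mutual distance $\ge L-1$ (by \eqref{est:stamodul2}), hence are $\boO\big(L\exp(-\nu_{\gc^*}L/2)\big)$ once one invokes the decay bound $\nu_{\gc(t)}\ge\nu_{\gc^*}/2$ of \eqref{eq:ouf1}; the $\eps$-dependent corrections are $\boO(\|\eps\|_X)$. Shrinking $\alpha_2$ and enlarging $L_2$ then ensures $M=M_0+\boO(\alpha)+\boO\big(L\exp(-\nu_{\gc^*}L/2)\big)$ is a small perturbation of the invertible $M_0$, so $\|M^{-1}\|=\boO(1)$.

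For the source, I would expand $E'(U)=E'(R_{\gc,\ga})+E''(R_{\gc,\ga})\eps+\boO(\|\eps\|_X^2)$ and compute the leading pairing $\langle\mathcal{J}E'(R_{\gc,\ga}),\partial_x Q_{c_k,a_k}\rangle$: writing $E'(R_{\gc,\ga})=\sum_j E'(Q_{c_j,a_j})+\boG$ and using $\mathcal{J}E'(Q_{c_j,a_j})=-c_j\partial_x Q_{c_j,a_j}$, this pairing equals $-c_k\|\partial_x Q_{c_k}\|_{L^2}^2$ up to exponentially small interactions, which is exactly what combines with the diagonal entry $a_k'\|\partial_x Q_{c_k}\|_{L^2}^2$ to produce $a_k'-c_k$. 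For the second orthogonality condition I would rewrite $\langle\mathcal{J}E'(U),P'(Q_{c_k,a_k})\rangle=-\langle E'(U),\partial_x Q_{c_k,a_k}\rangle$ by skew-adjointness and $\mathcal{J}P'(Q_{c_k,a_k})=\partial_x Q_{c_k,a_k}$, the analogous expansion leaving only the vanishing diagonal term $c_k\langle P'(Q_{c_k}),\partial_x Q_{c_k}\rangle=0$ plus small remainders. Altogether $S=\boO(\|\eps\|_X)+\boO\big(L\exp(-\nu_{\gc^*}L/2)\big)$, and solving $Y=M^{-1}S$ gives \eqref{est:modul2}. I expect the interaction estimate $\|\boG\|=\boO\big(L\exp(-\nu_{\gc^*}L/2)\big)$—quantifying that the nonlinear, nonlocal operator $E'$ (which contains the quotients $\partial_x\eta/(1-\eta)$) evaluated on a well-separated sum differs from $\sum_j E'(Q_{c_j,a_j})$ only by exponentially small terms—to be the main obstacle, as one must use Corollary \ref{cor:toutvabien} to control $1-\eta$ from below and track the precise rate, which is the origin of both the factor $L$ and the constant $\nu_{\gc^*}/2$ in \eqref{est:modul2}.
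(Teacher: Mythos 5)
Your argument is correct and its core coincides with the paper's: differentiate the orthogonality conditions \eqref{cond:staorth} in time, obtain a $2N\times 2N$ linear system for $(a_k'-c_k,c_k')$, invert the matrix by observing that parity kills the diagonal couplings $\langle \partial_c Q_c,\partial_x Q_c\rangle_{L^2}$ and $P'(Q_c)(\partial_x Q_c)$ while $\|\partial_x Q_c\|_{L^2}^2$ and $\frac{d}{dc}P(Q_c)=(2-c^2)^{1/2}$ are bounded away from zero, and estimate the source by $\boO(\|\eps\|_X)+\boO(L\exp(-\nu_{\gc^*}L/2))$ using the exponential decay \eqref{est:etavc} and the separation \eqref{est:stamodul2}. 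The genuine difference is in the treatment of regularity: the paper first carries out the computation for solutions in $X^3(\R)$, where \eqref{eq:epseta}--\eqref{eq:epsv} hold classically, and then passes to the limit in the resolved ODE system $M(t)^{-1}\Phi(t)$ by density and continuity of the flow; you instead work directly with the energy-space solution through the weak (distributional) form of \eqref{HGP}, pairing against the smooth, rapidly decaying test functions $\partial_x Q_{c_k,a_k}$ and $P'(Q_{c_k,a_k})$ and invoking the implicit function theorem in $(\gc,\ga,t)$. Your route is cleaner in that it avoids writing the approximating ODE systems and their uniform convergence, and the Hamiltonian bookkeeping $\partial_t U=\mathcal{J}E'(U)$, $\mathcal{J}P'=\partial_x$, $\mathcal{J}E'(Q_c)=-c\,\partial_x Q_c$ makes the cancellation producing $a_k'-c_k$ transparent; but note that the validity of the weak formulation for $\boC^0([0,T],\boN\boV(\R))$ solutions is itself obtained by approximation from Theorem \ref{thm:cauchyhgp} (exactly as in Corollary \ref{Cor:intp}), so the density argument is relocated rather than eliminated. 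One point to tighten: the correction terms $c_k'\langle\eps,\partial_c\partial_x Q_{c_k,a_k}\rangle_{L^2}$ and $a_k'\langle\eps,\partial_x^2 Q_{c_k,a_k}\rangle_{L^2}$ arising from $\partial_t$ hitting the test functions contain the unknowns themselves, so they must be absorbed as $\boO(\|\eps\|_X)$ perturbations of the matrix $M$ (as in the paper's definition of $M_{k,j}$), not placed in the source $S$; otherwise the estimate $S=\boO(\|\eps\|_X)$ is circular.
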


From now on, we make the additional assumption that our base set of speeds $\gc^*$ satisfies the inequalities 
\begin{equation}
\label{eq:ordo}
\bc_ {\bf 1}^{\bf*} {\bf < \cdots <} \bc_{\bN}^{\bf *}{\bf ,}
\end{equation}
and we set
$$\sigma^* = \frac{1}{2} \min \big\{ c_{k + 1}^* - c_k^*, \ 1 \leq k \leq N-1 \big\} > 0.$$
As a consequence of \eqref{est:stamodul1}, \eqref{est:modul2} and \eqref{eq:ordo}, we obtain 

\begin{cor}
\label{cor:separated}
There exist positive numbers $\alpha_3 \leq \alpha_2$ and $L_3 \geq L_2$, depending only on $\gc^*$, such that if $\alpha \leq \alpha_3$ and $L \geq L_3$, then for any $t \in [0, T]$,
\begin{equation}
\label{est:stamodul2bis}
a_{k + 1}(t) - a_k(t) > a_{k+1}(0)- a_k(0) + \sigma^*t \geq (L - 1) + \sigma^* t,
\end{equation}
for each $1 \leq k \leq N - 1$, and 
\begin{equation}
\label{def:Wtierce}
\sqrt{2 - (a_k'(t))^2} \geq \frac{\nu_{\gc^*}}{2},
\end{equation}
for each $1 \leq k \leq N$.
\end{cor}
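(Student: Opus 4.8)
The plan is to derive both assertions from a single uniform bound on $|a_k'(t) - c_k^*|$, valid for all $t \in [0, T]$ and all $1 \leq k \leq N$. First I would record that, since $(\eta(\cdot, t), v(\cdot, t)) \in \boU_{\gc^*}(\alpha, L)$ for every $t$, the estimate \eqref{est:stamodul1} of Proposition \ref{prop:stadec} applies pointwise in time and yields both $\| \eps(\cdot, t) \|_X = \boO(\alpha)$ and $|c_k(t) - c_k^*| = \boO(\alpha)$. Feeding the bound on $\| \eps(\cdot, t) \|_X$ into \eqref{est:modul2} of Proposition \ref{prop:modul} controls $|a_k'(t) - c_k(t)|$, and combining with the bound on $|c_k(t) - c_k^*|$ gives, by the triangle inequality,
\begin{equation*}
|a_k'(t) - c_k^*| = \boO(\alpha) + \boO\Big( L \exp\Big( - \frac{\nu_{\gc^*} L}{2} \Big) \Big),
\end{equation*}
uniformly in $t \in [0, T]$, where the implicit constants depend only on $\gc^*$. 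Since $L \exp(-\nu_{\gc^*} L / 2) \to 0$ as $L \to +\infty$, the whole right-hand side can be rendered as small as we wish by choosing $\alpha_3 \leq \alpha_2$ small and $L_3 \geq L_2$ large, both depending only on $\gc^*$ (recall that under $\alpha \leq \alpha_2$, $L \geq L_2$ the maps $\gc, \ga$ are $\boC^1$, so that $a_k'$ is well-defined).

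For the separation estimate \eqref{est:stamodul2bis}, I would write
\begin{equation*}
a_{k + 1}'(t) - a_k'(t) = (c_{k + 1}^* - c_k^*) + \big( a_{k + 1}'(t) - c_{k + 1}^* \big) - \big( a_k'(t) - c_k^* \big),
\end{equation*}
and invoke the ordering \eqref{eq:ordo}, which by the definition of $\sigma^*$ gives $c_{k + 1}^* - c_k^* \geq 2 \sigma^*$. Shrinking $\alpha_3$ and enlarging $L_3$ so that the uniform bound above forces $|a_{k + 1}'(t) - c_{k + 1}^*| + |a_k'(t) - c_k^*| < \sigma^*$, I obtain $a_{k + 1}'(t) - a_k'(t) > \sigma^*$ for all $t \in [0, T]$. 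Integrating in time then yields the strict inequality $a_{k + 1}(t) - a_k(t) > a_{k + 1}(0) - a_k(0) + \sigma^* t$ for $t > 0$. The second inequality, namely $a_{k + 1}(0) - a_k(0) \geq L - 1$, follows from \eqref{est:stamodul2} in Corollary \ref{cor:toutvabien}, which places $\ga(0)$ in $\Pos_N(L - 1)$.

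For the speed bound \eqref{def:Wtierce}, I would note that the definition of $\nu_{\gc^*}$ gives $2 - (c_k^*)^2 \geq \nu_{\gc^*}^2$, hence $|c_k^*| \leq \sqrt{2 - \nu_{\gc^*}^2}$ is bounded away from $\sqrt{2}$. Expanding
\begin{equation*}
(a_k'(t))^2 - (c_k^*)^2 = 2 c_k^* \big( a_k'(t) - c_k^* \big) + \big( a_k'(t) - c_k^* \big)^2,
\end{equation*}
and bounding the right-hand side by $\boO\big( |a_k'(t) - c_k^*| \big)$ using $|c_k^*| < \sqrt{2}$, I get $2 - (a_k'(t))^2 \geq \nu_{\gc^*}^2 - \boO\big( |a_k'(t) - c_k^*| \big)$. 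A final shrinking of $\alpha_3$ and enlarging of $L_3$ makes this error term at most $\tfrac{3}{4} \nu_{\gc^*}^2$, leaving $2 - (a_k'(t))^2 \geq \tfrac{1}{4} \nu_{\gc^*}^2$, which is precisely \eqref{def:Wtierce}.

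The argument is essentially bookkeeping: the only point requiring care is ensuring that the finitely many smallness requirements on $\alpha_3$ and $L_3$ — one from the separation step, governed by $\sigma^*$, and one from the speed step, governed by $\nu_{\gc^*}$ — can be met simultaneously while the dependence stays on $\gc^*$ alone. There is no genuine analytic obstacle, since all the substantive estimates have already been packaged into Propositions \ref{prop:stadec} and \ref{prop:modul}; the main subtlety is the standard one of promoting a pointwise-in-time closeness bound to a monotonicity statement by integrating the differential inequality for $a_{k + 1} - a_k$.
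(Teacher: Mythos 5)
Your proposal is correct and follows essentially the same route as the paper: the paper likewise combines \eqref{est:stamodul1} and \eqref{est:modul2} into the uniform bound $\sum_k |a_k'(t) - c_k^*| = \boO(\alpha) + \boO(L \exp(-\nu_{\gc^*} L/2))$, deduces $a_{k+1}'(t) - a_k'(t) > \sigma^*$ and integrates, invokes \eqref{est:stamodul2} at $t = 0$ for the second inequality, and obtains \eqref{def:Wtierce} by the same smallness argument (phrased there as a perturbation of $\sqrt{2 - (c_k^*)^2}$ rather than of $2 - (c_k^*)^2$, an immaterial difference).
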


\subsubsection{Evolution in time of $\boG$ and monotonicity formula}
\label{sub:mono}

We define the function $\boG$ on $[0, T]$ by
$$\boG(t) := G \big( \eta(\cdot, t), v(\cdot, t) \big),$$
where $G$ was introduced in Subsection \ref{sub:decomp}. We rewrite $\boG$ according to the equality
\begin{equation}
\label{eq:decompG}
\boG(t) = E(t) + c_1^* P(t) + \sum_{k = 2}^N \big( c_k^* - c_{k - 1}^* \big) Q_k(t), 
\end{equation}
where
$$E(t) := E(\eta(\cdot, t), v(\cdot, t)), \quad P(t) := P(\eta(\cdot, t), v(\cdot, t)),$$
and
$$Q_k(t) := \int_\R \Psi_k(x) p \big( \eta(x, t), v(x, t) \big) dx = \sum_{j = k}^N P_j(t).$$
In case $N = 1$ (i.e. there is only one soliton), it follows from the conservation of energy and momentum that $\boG$ is constant in time. In the general case, we have the following monotonicity formula.
 
\begin{prop}
\label{prop:mono}
There exist positive numbers $\alpha_4 \leq \alpha_3$ and $L_4 \geq L_3$, depending only on $\gc^*$, such that if $\alpha \leq \alpha_4$ and $L \geq L_4$, then 
$$\frac{d}{dt} \Big( Q_k(t) \Big) \leq \boO \Big( \exp \Big( - \frac{\nu_{\gc^*} (L + \sigma^* t)}{16} \Big) \Big),$$
for any $t \in [0, T]$. 
\end{prop}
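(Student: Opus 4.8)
The plan is to differentiate $Q_k$ directly and, via a local conservation law for the momentum density $p=\tfrac12\eta v$, rewrite $\tfrac{d}{dt}Q_k$ as the integral of a localized quadratic form whose sign is governed by the \emph{subsonic} character of the cutoff speed. Since $\Psi_1\equiv1$, the case $k=1$ is just conservation of momentum ($\tfrac{d}{dt}Q_1=\tfrac{d}{dt}P=0$), so I fix $2\le k\le N$ and write $\Psi_k(x,t)=\tfrac12\big(1+\th(\tfrac{\nu_{\gc^*}}{16}(x-b_k(t)))\big)$ with $b_k:=\tfrac12(a_{k-1}+a_k)$. Because $\ga\in\boC^1$ by Proposition \ref{prop:modul}, the chain rule gives the exact identity $\partial_t\Psi_k=-b_k'\,\partial_x\Psi_k$, where $b_k'=\tfrac12(a_{k-1}'+a_k')$ and $\partial_x\Psi_k\ge0$. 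A direct (if tedious) computation starting from \eqref{HGP} shows that $p$ obeys the local conservation law $\partial_t p=\partial_x F_P$, where the flux
\[
F_P=\tfrac12(1-2\eta)v^2+\tfrac14\eta^2-\tfrac{\eta}{4}\,\partial_x\Big(\tfrac{\partial_x\eta}{1-\eta}\Big)+\tfrac{(\partial_x\eta)^2}{8(1-\eta)}+\tfrac{\eta(\partial_x\eta)^2}{8(1-\eta)^2}
\]
vanishes at infinity. Integrating $\int\Psi_k\partial_t p$ by parts and combining with the $\partial_t\Psi_k$ term yields the key identity
\[
\frac{d}{dt}Q_k(t)=-\int_\R\partial_x\Psi_k\,\big(b_k'\,p+F_P\big)\,dx,
\]
so it suffices to bound the integrand from below up to the announced exponential error.

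On the support of $\partial_x\Psi_k$, that is within distance $\boO(1/\nu_{\gc^*})$ of the midpoint $b_k$, the chain $R_{\gc,\ga}$ is exponentially small: by Corollary \ref{cor:separated} the nearest soliton centre lies at distance at least $\tfrac12(a_k-a_{k-1})\ge\tfrac12(L-1+\sigma^*t)$, and each profile decays at rate $\sqrt{2-c_k^2}\ge\nu_{\gc^*}/2$. Writing $(\eta,v)=R_{\gc,\ga}+\eps$ and expanding $b_k'p+F_P$, every term carrying at least one factor of the chain contributes, after integration against $\partial_x\Psi_k$ and a Cauchy--Schwarz estimate using $\|\eps\|_X=\boO(\alpha)$, a quantity of order $\boO\big(\exp(-\tfrac{\nu_{\gc^*}(L+\sigma^*t)}{16})\big)$. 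The exponent $\tfrac1{16}$ arises because $\partial_x\Psi_k\sim\mathrm{sech}^2$ decays at rate $\nu_{\gc^*}/8$ away from $b_k$, so its value at an adjacent soliton centre, at distance $\tfrac12(a_k-a_{k-1})$, is of size $\exp(-\tfrac{\nu_{\gc^*}}{16}(a_k-a_{k-1}))$. It therefore remains to show that the purely $\eps$-dependent part of $\int\partial_x\Psi_k(b_k'p+F_P)$ is nonnegative up to such errors.

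The quadratic part of $b_k'p+F_P$ in $(\eta,v)$ is $\tfrac{b_k'}2\eta v+\tfrac12v^2+\tfrac14\eta^2-\tfrac14\eta\,\partial_x^2\eta+\tfrac18(\partial_x\eta)^2$. Integrating $-\tfrac14\eta\partial_x^2\eta$ against $\partial_x\Psi_k$ by parts produces $+\tfrac14(\partial_x\eta)^2$ and a remainder $-\tfrac18\int\partial_x^3\Psi_k\,\eta^2$, so the integrated form becomes
\[
\int_\R\partial_x\Psi_k\Big(\tfrac12v^2+\tfrac{b_k'}2\eta v+\tfrac14\eta^2+\tfrac38(\partial_x\eta)^2\Big)\,dx-\tfrac18\int_\R\partial_x^3\Psi_k\,\eta^2\,dx.
\]
The pointwise form $\tfrac12v^2+\tfrac{b_k'}2\eta v+\tfrac14\eta^2$ has determinant $\tfrac{2-(b_k')^2}{16}$ and is positive definite exactly when $|b_k'|<\sqrt2$. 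This is the heart of the matter, and it holds because $b_k'$ is the average of the two subsonic speeds $a_{k-1}',a_k'$: by convexity and \eqref{def:Wtierce}, $2-(b_k')^2\ge\tfrac12\big((2-(a_{k-1}')^2)+(2-(a_k')^2)\big)\ge\nu_{\gc^*}^2/4$. The remainder is controlled using $|\partial_x^3\Psi_k|\le4(\nu_{\gc^*}/16)^2\,\partial_x\Psi_k$, which is small relative to this spectral gap precisely because the cutoff scale is $16/\nu_{\gc^*}$ (cf. the footnote on the optimality of the choice $\nu_{\gc^*}/16$), and is thus absorbed into the positive part. Finally, all cubic and higher terms of $b_k'p+F_P$ carry an extra factor $\eta$ or $\eta/(1-\eta)$ while $v$ enters at most quadratically, so they are bounded by $\|\eta\|_{L^\infty(\mathrm{supp}\,\partial_x\Psi_k)}=\boO(\alpha)+\boO\big(\exp(-\tfrac{\nu_{\gc^*}(L+\sigma^*t)}{16})\big)$ times the already-established positive quadratic form, and are absorbed once $\alpha_4$ is small enough. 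Combining the three paragraphs gives $\int\partial_x\Psi_k(b_k'p+F_P)\ge-\boO\big(\exp(-\tfrac{\nu_{\gc^*}(L+\sigma^*t)}{16})\big)$, which is the claim.

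The principal obstacle is the quantitative quadratic-form positivity of the last paragraph: one must verify that the subsonic gap $2-(b_k')^2\ge\nu_{\gc^*}^2/4$ strictly dominates both the $\partial_x^3\Psi_k$ remainder (which forces the cutoff scale to be no larger than a fixed multiple of $1/\nu_{\gc^*}$) and the cubic remainders, the latter complicated by the fact that $\eps_v$ is only controlled in $L^2$. The point is that $v$ never appears beyond second order in $b_k'p+F_P$, so the dangerous cubic contributions always carry an $L^\infty$-small factor of $\eta$; the derivative cubic terms of type $\eta\,\partial_x^2\eta$ are handled by a further integration by parts against $\partial_x\Psi_k$, trading the second derivative for $(\partial_x\eta)^2$ (controlled by the $\tfrac38(\partial_x\eta)^2$ term) plus lower-order cutoff derivatives. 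The remaining bookkeeping, namely tracking every interaction term and confirming that none decays slower than the rate $\nu_{\gc^*}/16$, is routine but lengthy.
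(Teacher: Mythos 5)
Your argument is correct in substance and rests on exactly the same mechanism as the paper's proof: a local conservation law for the momentum density, the moving--frame identity $\frac{d}{dt}Q_k=-\int_\R\partial_x\Psi_k\,(b_k'\,p+F_P)$, the positivity of the quadratic form $\tfrac12v^2+\tfrac{b_k'}{2}\eta v+\tfrac14\eta^2$ quantified by the subsonic gap $2-(b_k')^2\geq\nu_{\gc^*}^2/4$ from \eqref{def:Wtierce}, control of the third derivative of the cutoff by $|\partial_x^3\Psi_k|\leq 4\tau_0^2\,\partial_x\Psi_k$, and the rate $\nu_{\gc^*}/16=\min\{\nu_{\gc^*}/8,\nu_{\gc^*}/2\}/2$ coming from the product of the cutoff tail with the soliton tails. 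Your flux $F_P$ is a correct antiderivative of $\partial_t p$ (it coincides with the one in Lemma \ref{lem:p} after the identity $\tfrac1{8(1-\eta)}+\tfrac{\eta}{8(1-\eta)^2}=\tfrac1{8(1-\eta)^2}$). Where you genuinely diverge from the paper is in the treatment of the nonlinear and large-density contributions: the paper (Proposition \ref{prop:almomo}) writes the dispersive part of the flux as $\tfrac12\partial_x^3\big(\eta+\ln(1-\eta)\big)$, proves \emph{pointwise} non-negativity of the full integrand on a long interval $I_0$ around the midpoint where $\eta\leq\tfrac12$ holds (using $|\eta+\ln(1-\eta)|\leq\eta^2$ there), and bounds the complement of $I_0$ crudely by the energy density times $\sup_{|x|\geq\tau_0R_0}\partial_x\Psi$; you instead expand $(\eta,v)=R_{\gc,\ga}+\eps$, discard all $R$-carrying terms as exponentially small, and establish positivity of the quadratic form in $\eps$ with absorption of the cubic terms. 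Your scheme works but costs more bookkeeping, since $F_P$ is not polynomial in $(\eta,v)$ and the splitting into ``$R$-carrying'' and ``pure $\eps$'' pieces must be done by hand on the rational terms.

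Two points need tightening. First, the phrase ``$\|\eta\|_{L^\infty(\mathrm{supp}\,\partial_x\Psi_k)}=\boO(\alpha)+\boO(\exp(\cdots))$'' is false as written: $\partial_x\Psi_k$ is a $\mathrm{sech}^2$ and its support is all of $\R$, and near the soliton centres $\eta$ is of order one. The cubic absorption only applies to the purely $\eps$-dependent remainders, for which $\|\eps_\eta\|_{L^\infty}=\boO(\alpha)$ globally; every contribution where $\eta$ is large must be routed through the $R$-carrying error terms of your second paragraph (where it is indeed killed by the tail of $\partial_x\Psi_k$). Second, the derivation of $\partial_tp=\partial_xF_P$ and the subsequent integrations by parts require more regularity than $(\eta,v)\in\boC^0([0,T],X(\R))$ (your $F_P$ involves $\partial_x^2\eta$, which is not defined for $\eta\in H^1$); as in the paper's Corollary \ref{Cor:intp}, one must first prove the integrated identity for smooth data, arrange that the final expression involves at most one derivative of $\eta$, and pass to the limit using the continuity of the flow from Theorem \ref{thm:cauchyhgp}. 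Neither point is a fatal obstruction, but both must appear in a complete write-up.
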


As a consequence of the ordering condition \eqref{eq:ordo}, we thus obtain

\begin{cor}
\label{cor:mono}
If $\alpha \leq \alpha_4$ and $L \geq L_4$, then
$$\frac{d}{dt} \Big( \boG(t) \Big) \leq \boO \Big( \exp \Big( - \frac{\nu_{\gc^*} (L + \sigma^* t)}{16} \Big) \Big),$$
for any $t \in [0, T]$.
\end{cor}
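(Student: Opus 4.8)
The plan is to differentiate the decomposition \eqref{eq:decompG} of $\boG$ and exploit the two conservation laws together with the sign structure coming from the ordering \eqref{eq:ordo}. Writing
$$\frac{d}{dt}\boG(t) = \frac{d}{dt}E(t) + c_1^* \frac{d}{dt}P(t) + \sum_{k=2}^N \big(c_k^* - c_{k-1}^*\big) \frac{d}{dt}Q_k(t),$$
I would first invoke Theorem \ref{thm:cauchyhgp}, according to which both the energy $E$ and the momentum $P$ are constant along the flow of \eqref{HGP}. Hence the first two terms vanish identically, leaving
$$\frac{d}{dt}\boG(t) = \sum_{k=2}^N \big(c_k^* - c_{k-1}^*\big)\frac{d}{dt}Q_k(t).$$

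The crucial observation is that the ordering hypothesis \eqref{eq:ordo} guarantees $c_k^* - c_{k-1}^* > 0$ for every $2 \leq k \leq N$. Since Proposition \ref{prop:mono} furnishes an \emph{upper} bound on each $\frac{d}{dt}Q_k(t)$, the positivity of these weights is exactly what allows us to preserve the direction of the inequality upon taking the weighted linear combination: multiplying each one-sided bound by a nonnegative constant and summing does not reverse the sign. This is the only place where the ordering condition enters, and it is the conceptual heart of why the monotonicity of the individual localized momenta translates into monotonicity of $\boG$. Had any speed gap been negative, the corresponding term would contribute with the wrong sign and the argument would break down.

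Applying Proposition \ref{prop:mono} termwise then yields
$$\frac{d}{dt}\boG(t) \leq \sum_{k=2}^N \big(c_k^* - c_{k-1}^*\big)\,\boO\Big(\exp\Big(-\frac{\nu_{\gc^*}(L+\sigma^* t)}{16}\Big)\Big),$$
valid whenever $\alpha \leq \alpha_4$ and $L \geq L_4$. Since the differences $c_k^* - c_{k-1}^*$ are fixed positive constants depending only on $\gc^*$, the finite sum of $\boO$-terms is again of the form $\boO\big(\exp(-\nu_{\gc^*}(L+\sigma^* t)/16)\big)$, which is precisely the claimed estimate. There is essentially no analytic obstacle in this corollary: all the genuine difficulty has been concentrated in the monotonicity formula of Proposition \ref{prop:mono}, and the present statement is merely the bookkeeping step that assembles that formula with the conservation of $E$ and $P$ and with the sign of the consecutive speed gaps.
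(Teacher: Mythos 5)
Your argument is correct and coincides with the paper's own proof: differentiate the decomposition \eqref{eq:decompG}, drop the $E$ and $P$ terms by conservation, and use the positivity of the speed gaps $c_k^* - c_{k-1}^*$ from \eqref{eq:ordo} to apply Proposition \ref{prop:mono} termwise without reversing the inequality. Nothing is missing.
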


\subsubsection{Uniform control on $\| \eps \|_X$}
\label{sub:conteps}

We are now in position to obtain a uniform control on $\| \eps(\cdot, t) \|_X$ combining the previous results with simple algebraic identities. We divide the analysis in a number of steps, each one providing a uniform control on an intermediate quantity.

\begin{step}
Control on $|Q_k(t) - Q_k(0)|$.
\end{step}

Since $E$ and $P$ are conserved by the flow, we obtain from \eqref{eq:decompG},
\begin{equation}
\label{eq:tr1}
Q_j(0) - Q_j(t) = \boG(0) - \boG(t) + \frac{1}{c_j^* - c_{j - 1}^*} \sum_{k \neq j} (c_k^* - c_{k - 1}^*) \big( Q_k(t) - Q_k(0) \big),
\end{equation}
for any $2 \leq j \leq N$. On the one hand, we have by \eqref{eq:upperG} for $\boG(0)$, and \eqref{eq:lowerG} for $\boG(t)$,
\begin{equation}
\label{eq:tr2}
\boG(0) - \boG(t) \leq \boO \Big( \| \eps(\cdot, 0) \|_X^2 \Big) + \boO \Big( \| \eps(\cdot, t) \|_X^3 \Big) + \boO \Big( \sum_{k = 1}^N |c_k(t) - c_k^*|^2 \Big) + \boO \Big( L \exp \Big( - \frac{\nu_{\gc^*} L}{16} \Big) \Big). 
\end{equation}
On the other hand, from Proposition \ref{prop:mono} and assumption \eqref{eq:ordo} on the speeds, we have 
\begin{equation}
\label{eq:tr3}
\frac{1}{c_j^* - c_{j - 1}^*} \sum_{k \neq j} (c_k^* - c_{k - 1}^*) \big( Q_k(t) - Q_k(0) \big) \leq \boO \Big( \exp \Big( - \frac{\nu_{\gc^*} L}{16} \Big) \Big).
\end{equation}
Combining \eqref{eq:tr1}, \eqref{eq:tr2} and \eqref{eq:tr3} for the upper bound part, and using once more Proposition \ref{prop:mono} for the lower bound part, we derive
\begin{equation}
\label{eq:Pj}
\sum_{k = 2}^N \big| Q_k(t) - Q_k(0) \big| = \boO \Big( \| \eps(\cdot, 0) \|_X^2 \Big) + \boO \Big( \|\eps(\cdot, t) \|_X^3 \Big) + \boO \Big( \sum_{k = 1}^N |c_k(t) - c_k^*|^2 \Big) + \boO \Big( L \exp \Big( - \frac{\nu_{\gc^*} L}{16} \Big) \Big).
\end{equation}

\begin{step}
Control on $|P(Q_{c_k(t)}) - P(Q_{c_k(0)})|$.
\end{step}

It follows from Proposition \ref{prop:inutile} that 
\begin{equation}
\label{eq:fart0}
P(t) - Q_2(t) = P_1(t) = P(Q_{c_1(t)}) + \boO \Big( \| \eps(\cdot, t) \|_X^2 \Big) + \boO \Big( L \exp(- \frac{\nu_{\gc^*} L}{16} \Big) \Big), 
\end{equation}
and that for $2 \leq k \leq N$,
\begin{equation}
\label{eq:fart1}
Q_k(t) - Q_{k + 1}(t) = P_k(t) = P(Q_{c_k(t)}) + \boO \Big( \| \eps(\cdot, t) \|_X^2 \Big) + \boO \Big( L \exp \Big( - \frac{\nu_{\gc^*} L}{16} \Big) \Big).
\end{equation}
Combining \eqref{eq:fart0} and \eqref{eq:fart1} with \eqref{eq:Pj} and the conservation of the momentum $P$, we obtain
\begin{equation}
\label{eq:fart2}
\begin{split}
\sum_{k = 1}^N \big| P(Q_{c_k(t)}) - P(Q_{c_k(0)}) \big| & = \boO \Big( \| \eps(\cdot, 0) \|_X^2 \Big) + \boO \Big( \| \eps(\cdot, t) \|_X^2 \Big) + \boO \Big( \sum_{k = 1}^N |c_k(t) - c_k^*|^2 \Big)\\
& + \boO \Big( L \exp \Big( - \frac{\nu_{\gc^*} L}{16} \Big) \Big).
\end{split}
\end{equation}

\begin{step}
Control on $|c_k(t) - c_k^*|$.
\end{step}

Combining \eqref{eq:fart2} with the fact that
$$\frac{d}{dc} \Big( P(Q_c) \Big) = (2 - c^2)^\frac{1}{2} \neq 0,$$
and the inequality
$$\sum_{k = 1}^N \big| c_k(t) - c_k^* \big| \leq \sum_{k = 1}^N \big| c_k(t) - c_k(0) \big| + \sum_{k = 1}^N \big| c_k(0) - c_k^* \big|,$$
we obtain the existence of positive numbers $\alpha_5 \leq \alpha_4$ and $L_5 \geq L_4$, depending only on $\gc^*$, such that if $\alpha \leq \alpha_5$ and $L \geq L_5$, then
$$\sum_{k = 1}^N \big| c_k(t) - c_k^* \big| = \boO \Big( \| \eps(\cdot, 0) \|^2 \Big) + \boO \Big( \| \eps(\cdot, t) \|^2 \Big) + \boO \Big( \sum_{k = 1}^N |c_k(0) - c_k^*| \Big) + \boO \Big( L \exp \Big( - \frac{\nu_{\gc^*} L}{16} \Big) \Big).$$
Finally, from Proposition \ref{prop:stadec} we may bound $\underset{k = 1}{\overset{N}{\sum}} |c_k(0) - c_k^*|$ by $\boO(\| \eps(\cdot, 0) \|_X)$, and therefore obtain
\begin{equation}
\label{eq:contrckbis}
\sum_{k = 1}^N \big| c_k(t) - c_k^* \big| = \boO \Big( \| \eps(\cdot, 0) \| \Big) + \boO \Big( \|\eps(\cdot, t) \|^2 \Big) + \boO \Big( L \exp \Big(- \frac{\nu_{\gc^*} L}{16} \Big) \Big).
\end{equation}

\begin{step}
Control on $\| \eps(\cdot, t) \|_X$.
\end{step}

Combining Corollary \ref{cor:estG} for $(\eta, v) := (\eta(\cdot, t), v(\cdot, t))$, and Corollary \ref{cor:mono}, we obtain, if $\alpha \leq \alpha_4$ and $L \geq L_4$,
$$\frac{\Lambda^*}{4} \| \eps(\cdot, t) \|_X^2 = \boO \Big( \| \eps(\cdot, 0) \|_X^2 \Big) + \boO \Big( \sum_{k = 1}^N |c_k(t) - c_k^*|^2 \Big) + \boO \Big( L \exp \Big( - \frac{\nu_{\gc^*} L}{16} \Big) \Big) + \boO \Big( \| \eps(\cdot, t) \|_X^3 \Big).$$ 
It follows therefore from \eqref{eq:contrckbis} that there exist positive numbers $\alpha_6 \leq \alpha_5$ and $L_6 \geq L_5$, depending only on $\gc^*$, such that if $\alpha \leq \alpha_6$ and $L \geq L_6$, then
\begin{equation}
\label{eq:fond0}
\| \eps(\cdot, t) \|_X^2 \leq \boO \Big( \| \eps(\cdot, 0) \|_X^2 \Big) + \boO \Big( L \exp \Big( - \frac{\nu_{\gc^*} L}{16} \Big) \Big). 
\end{equation}

\subsubsection{Proof of Theorem \ref{thm:orbistab} completed}
\label{sub:end}

Let $(\eta^0, v^0)$ be as in the statement of Theorem \ref{thm:orbistab}. We first impose the condition $\alpha^* < \alpha_6$ and $L^* > L_6 + 2$, so that by continuity of the flow, since $L^0 \geq L^*$ by assumption, 
$$T_{\rm stop} := \sup \big\{ t \geq 0, \ {\rm s.t.} \ (\eta(\cdot, s), v(\cdot, s)) \in \boU_{\gc^*}(\alpha_6, L^0 - 2), \ \forall s \in [0, t] \big\} > 0.$$
Notice that $T_{\rm stop} < T_{\rm max}$ since $\boU_{\gc^*}(\alpha_6, L^0 - 2) \subset \boN\boV(\R)$.

By \eqref{eq:fond0} we have
\begin{equation}
\label{eq:yes0}
\big\| \eps(\cdot, t) \big\|_X = \boO \Big( \alpha^0 + \exp \Big( - \frac{\nu_{\gc^*} L^0}{33} \Big) \Big).
\end{equation}
Combining \eqref{eq:yes0} with \eqref{eq:contrckbis} and \eqref{eq:fond0} yields a positive number $K_6$, depending only on $\gc^*$, such that 
\begin{equation}
\label{eq:yes}
\big\| (\eta(\cdot, t), v(\cdot, t)) - R_{\gc^*, \ga(t)} \big\|_X \leq K_6 \Big( \alpha^0 + \exp \Big( - \frac{\nu_{\gc^*} L^0}{33} \Big) \Big), 
\end{equation}
for any $t \in [0, T_{\rm stop})$. On the other hand, by definition of $L^0$, we have $(\eta(0), v(0)) \in \boU_{\gc^*}(\alpha^*, L^0)$. In particular, by Proposition \ref{prop:stadec},
$$\min \big\{ a_{k + 1}(0) - a_k(0), \ 1 \leq k \leq N - 1 \big\} \geq L^0 - 1,$$
so that, by Corollary \ref{cor:separated}, we finally have
\begin{equation}
\label{eq:yesbis}
\min \big\{ a_{k + 1}(t) - a_k(t), \ 1 \leq k \leq N - 1 \big\} = \min \big\{ a_{k + 1}(0) - a_k(0), \ 1 \leq k \leq N - 1 \big\} > L^0 - 2,
\end{equation}
for any $t \in [0, T_{\rm stop})$.

We therefore additionally impose the condition that $\alpha^*$ and $L^*$ satisfy 
$$K_6 \Big( \alpha^* + \exp \Big( - \frac{\nu_{\gc^*} L^*}{33} \Big) \Big) < \alpha_6,$$
so that
$$T_{\rm stop} = + \infty,$$
and in particular, \eqref{eq:yes} and \eqref{eq:yesbis} hold for any $t \in [0, + \infty)$. Finally, combining Proposition \ref{prop:modul} and \eqref{eq:contrckbis}, we obtain
$$\sum_{k = 1}^N |a_k'(t) - c_k^*| \leq K_7 \Big( \alpha^0 + \exp \Big( - \frac{\nu_{\gc^*} L^0}{33} \Big) \Big),$$
and the proof of Theorem \ref{thm:orbistab} is completed by setting $A^* = \max \{ K_6, K_7 \}$.

\subsection{Outline of the paper} 

In Section \ref{sec:minprop}, we present the proofs of the results stated in Subsections \ref{sub:coer}, \ref{sub:modul} and \ref{sub:decomp} of the introduction, which are related to the minimizing properties of solitons and sums of solitons. Section \ref{sec:evol} is instead devoted to the proofs of the results related to the dynamical properties of \eqref{HGP}, and which are stated in Subsections \ref{sub:evol} and \ref{sub:mono} of the introduction. Finally, for the sake of completeness, we provide in Appendix \ref{sec:implicit} a quantitative version of the implicit function theorem which we use in Section \ref{sec:minprop} in order to define the modulation parameters.

\numberwithin{cor}{section}
\numberwithin{equation}{section}
\numberwithin{lemma}{section}
\numberwithin{prop}{section}
\numberwithin{remark}{section}
\numberwithin{theorem}{section}
\section{Minimizing properties around a soliton and a sum of solitons}
\label{sec:minprop}

\subsection{Minimizing properties of solitons}

In this subsection we present the proof of Proposition \ref{prop:linstab}. It is reminiscent from arguments developed in \cite{LinZhiw1}.

\begin{proof}[Proof of Proposition \ref{prop:linstab}] 
Let us recall that the quadratic form $H_c$ is defined in \eqref{eq:Hc} as
$$H_c(\eps) := \frac{1}{4} \int_\R \frac{(\partial_x \eps_\eta)^2}{1 - \eta_c} + \frac{1}{4} \int_\R \Big( 2 - \frac{\partial_{xx}^2 \eta_c}{(1 - \eta_c)^2} - \frac{(\partial_x \eta_c)^2}{(1 - \eta_c)^3} \Big) \eps_\eta^2 + \int_\R \Big( (1 - \eta_c) \eps_v^2 + (c - 2 v_c) \eps_\eta \eps_v \Big).$$
Equation \eqref{eq:crit} writes for $\eta_c$ as
\begin{equation}
\label{eq:etac}
\partial_{xx}^2 \eta_c - (2 - c^2) \eta_c + 3 \eta_c^2 = 0. 
\end{equation}
Invoking \eqref{form:etavc} and \eqref{eq:etac}, the expression of $H_c$ may be recast as
\begin{equation}
\label{dussautoir}
H_c(\eps) = L_c(\eps_\eta) + \int_\R (1 - \eta_c) \Big( \eps_v + \frac{c}{2(1 - \eta_c)^2} \eps_\eta \Big)^2,
\end{equation}
where
$$L_c(\eps_\eta) := \frac{1}{4} \int_\R \frac{(\partial_x \eps_\eta)^2}{1 - \eta_c} + \frac{1}{4} \int_\R \frac{2 - c^2 - 6 \eta_c + 3 \eta_c^2}{(1 - \eta_c)^2} \eps_\eta^2.$$
The Sturm-Liouville operator $\boL_c$ associated to the quadratic form $L_c$, namely
$$\boL_c(\eps_\eta) : = - \partial_x \Big( \frac{\partial_x \eps_\eta}{4 (1 - \eta_c)} \Big) + \frac{2 - c^2 - 6 \eta_c + 3 \eta_c^2}{4 (1 - \eta_c)^2} \eps_\eta,$$
is a self-adjoint, unbounded operator on $L^2(\R)$, with domain $H^2(\R)$. Since $\eta_c(x) \to 0$ as $|x| \to + \infty$ by \eqref{form:etavc}, it follows from the Weyl criterion that its essential spectrum is equal to $[(2 - c^2)/4, + \infty)$. On the other hand, one can translate the invariance with respect to translations of equation \eqref{HGP} into the property that the function $\partial_x \eta_c$ belongs to the kernel of $\boL_c$. Again in view of \eqref{form:etavc}, the function $\partial_x \eta_c$ has exactly one zero. As a consequence, one can infer from standard Sturm-Liouville theory (see e.g. \cite{DunfSch0}) that the operator $\boL_c$ has exactly one negative eigenvalue $- \mu^-$. Moreover, the associated eigenspace, as well as the kernel of $\boL_c$, are of dimension one. We will denote $\eta^-$ an eigenfunction of $\boL_c$ for the eigenvalue $- \mu^-$. Notice in particular, that there exists a constant $0 < \mu^+ \leq 2 - c^2$ such that
\begin{equation}
\label{hari}
L_c(\eps_\eta) \geq \mu^+ \| \eps_\eta \|_{L^2}^2,
\end{equation}
as soon as $\langle \eps_\eta, \eta^- \rangle_{L^2} = \langle \eps_\eta, \partial_x \eta_c \rangle_{L^2} = 0$. 

Coming back to the quadratic form $H_c$, we notice that the associated operator $\boH_c$, namely
$$\boH_c(\eps) := \Big( \boL_c(\eps_\eta) + \frac{c^2}{4(1 - \eta_c)^3} \eps_\eta + \frac{c}{2(1 - \eta_c)} \eps_v, \frac{c}{2(1 - \eta_c)} \eps_\eta + (1 - \eta_c) \eps_v \Big),$$
is a self-adjoint, unbounded operator on $L^2(\R)^2$, with domain $H^2(\R) \times L^2(\R)$. Moreover, it again follows from the Weyl criterion that its essential spectrum is equal to $[(2 - c^2)/(3 + \sqrt{1 + 4 c^2}), + \infty)$. In view of \eqref{form:etavc} and \eqref{dussautoir}, we next check that
$$H_c \Big( \eta^-, - \frac{c \eta^-}{2(1 - \eta_c)^2} \Big) < 0, \quad {\rm and} \quad H_c \big( \partial_x \eta_c, \partial_x v_c \big) = 0,$$
so that $\boH_c$ has at least one negative eigenvalue, and the dimension of its kernel is at least one. Assume next that either $\boH_c$ owns another negative eigenvalue, or its kernel is at least of dimension two. Then, there exists a non-positive direction $\eps = (\eps_\eta, \eps_v)$ for $H_c$ such that $\langle \eps_\eta, \eta^- \rangle_{L^2(\R)} = \langle \eps_\eta, \partial_x \eta_c \rangle_{L^2(\R)} = 0$. In view of \eqref{dussautoir}, this is in contradiction with \eqref{hari}. Therefore, $\boH_c$ has exactly one negative eigenvalue $- \lambda^-$, with eigenfunction $\chi^- := (\chi_\eta^-, \chi_v^-)$, while its kernel is spanned by $\partial_x Q_c = (\partial_x \eta_c, \partial_x v_c)$. As a consequence, there exists a positive constant $\lambda^+ \leq (2 - c^2)/(3 + \sqrt{1 + 4 c^2})$ such that
\begin{equation}
\label{oued}
H_c(\eps) \geq \lambda^+ \| \eps \|_{L^2 \times L^2}^2 := \lambda^+ \big( \| \eps_\eta \|_{L^2}^2 + \| \eps_v \|_{L^2}^2 \big),
\end{equation}
for any pair $\eps$ in the closed subspace
\begin{equation}
\label{lakafia}
P_c^+ := \Big\{ (\eps_\eta, \eps_v) \in H^1(\R) \times L^2(\R), \ {\rm s.t.} \ \langle \eps, \chi^- \rangle_{L^2 \times L^2} = \langle \eps, \partial_x Q_c \rangle_{L^2 \times L^2} = 0 \Big\}.
\end{equation}

The next step in the proof consists in checking that estimate \eqref{oued} remains available (up to a further choice of $\lambda^+$), when the orthogonality conditions in \eqref{lakafia} are replaced by conditions \eqref{cond:orthc}. The proof is reminiscent from \cite{GriShSt1}. We consider the map $S(c) := E(Q_c) + c P(Q_c)$, which is well-defined and smooth on $(- \sqrt{2}, \sqrt{2}) \setminus \{ 0 \}$ in view of \eqref{def:Pc} and \eqref{def:Ec}. Using \eqref{eq:crit}, we compute
$$S'(c) = E'(Q_c)(\partial_c Q_c) + c P'(Q_c)(\partial_c Q_c) + P(Q_c) = P(Q_c),$$
so that, by \eqref{def:Pc},
$$S''(c) = P'(Q_c)(\partial_c Q_c) = (2 - c^2)^\frac{1}{2}.$$
On the other hand, taking the derivative with respect to $c$ of the identity $E'(Q_c) + c P'(Q_c) = 0$, we get
\begin{equation}
\label{picamoles}
\langle \boH_c(\partial_c Q_c), \eps \rangle_{L^2 \times L^2} + P'(Q_c)(\eps) = 0,
\end{equation}
for any $\eps \in X(\R)$. For $\eps = \partial_c Q_c$, this gives
\begin{equation}
\label{warburton}
H_c(\partial_c Q_c) = - P'(Q_c)(\partial_c Q_c) = - (2 - c^2)^\frac{1}{2} < 0.
\end{equation}
At this stage, we can decompose $\partial_c Q_c$ as $\partial_c Q_c = \alpha \chi^- + \beta \partial_x Q_c + r_c$, with $r_c \in P_c^+$. As a consequence,
$$H_c(\partial_c Q_c) = - \lambda_- \alpha^2 + H_c(r_c),$$
so that inequality \eqref{warburton} is equivalent to the fact that $\alpha \neq 0$, combined with the existence of a number $0 \leq \delta < 1$ such that
\begin{equation}
\label{bonnaire}
H_c(r_c) = \delta \lambda^- \alpha^2.
\end{equation}

According to the value of $\delta$, two situations can occur. If $\delta = 0$, then $r_c$ is equal to $0$. Since $\alpha \neq 0$, conditions \eqref{cond:orthc} are actually identical to the conditions in \eqref{lakafia}, so that \eqref{oued} remains available under conditions \eqref{cond:orthc}. If $\delta > 0$, we can decompose a pair $\eps$ which satisfies conditions \eqref{cond:orthc}, as $\eps := a \chi^- + r$, with $r \in P_c^+$, and compute
$$H_c(\eps) = - \lambda^- a^2 + \frac{1 -\delta}{2} H_c(r) + \frac{1 + \delta}{2} H_c(r).$$
In view of \eqref{oued}, the quadratic form $H_c$ is positive on $P_c^+$. Therefore, we can apply the Cauchy-Schwarz inequality to write
$$H_c(\eps) \geq - \lambda^- a^2 + \frac{(1 + \delta) \langle \boH_c(r_c), r \rangle_{L^2 \times L^2}^2}{2 H_c(r_c)} + \frac{1 - \delta}{2} H_c(r).$$
At this stage, it follows from \eqref{cond:orthc} and \eqref{picamoles} that
$$\langle \boH_c(r_c), r \rangle_{L^2 \times L^2} = \langle \boH_c(\partial_c Q_c), \eps \rangle_{L^2 \times L^2} + \lambda^- a \alpha = \lambda^- a \alpha.$$
Combining with \eqref{bonnaire}, we are led to
$$H_c(\eps) \geq \frac{1 - \delta}{2} \Big( \lambda^- a^2 + H_c(r) \Big),$$
so that by \eqref{oued},
$$H_c(\eps) \geq \frac{1 - \delta}{2} \min \big\{ \lambda^-, \lambda^+ \big\} \big\| \eps \big\|_{L^2 \times L^2}^2.$$
As a conclusion, up to a possible further choice of the positive constant $\lambda^+$, estimate \eqref{oued} still holds when $\eps$ satisfies conditions \eqref{cond:orthc}.

In order to complete the proof of estimate \eqref{eq:estimHc}, it remains to replace the $L^2$-norm of $\eps_\eta$ in \eqref{oued} by an $H^1$-norm. Given a number $0 < \tau < 1$, we write
$$H_c(\eps) = \frac{\tau}{4} \int_\R \frac{(\partial_x \eps_\eta)^2}{1 - \eta_c} + \tau \Big( H_c(\eps) - \frac{1}{4} \int_\R \frac{(\partial_x \eps_\eta)^2}{1 - \eta_c} \Big) + (1 - \tau) H_c(\eps).$$
In view of formulae \eqref{form:etavc} and \eqref{eq:Hc}, there exists a positive constant $K_c$, possibly depending on $c$, such that
$$\bigg| H_c(\eps) - \frac{1}{4} \int_\R \frac{(\partial_x \eps_\eta)^2}{1 - \eta_c} \bigg| \leq K_c \big\| \eps \big\|_{L^2 \times L^2}^2,$$
so that, under conditions \eqref{cond:orthc}, we can take benefit of \eqref{oued} to get
$$H_c(\eps) \geq \frac{\tau}{4} \int_\R \frac{(\partial_x \eps_\eta)^2}{1 - \eta_c} + \big( (1 - \tau) \lambda^+ - \tau K \big) \big\| \eps \big\|_{L^2 \times L^2}^2.$$
Since $\eta_c(0) = (2 - c^2)/2 = \max \{ |\eta_c(x)|, \ x \in \R \}$ by \eqref{form:etavc}, we deduce that
$$H_c(\eps) \geq \frac{\tau}{2 c^2} \int_\R (\partial_x \eps_\eta)^2 + \big( (1 - \tau) \lambda^+ - \tau K \big) \big\| \eps \big\|_{L^2 \times L^2}^2.$$
Choosing $\tau$ sufficiently small yields estimate \eqref{eq:estimHc}. Finally, since $H_c$ depends analytically on $c$ and the first two eigenvalues of $H_c$ are simple, the optimal constant in \eqref{eq:estimHc} depends continuously on $c$, from which the last statement in Proposition \ref{prop:linstab} follows. 
\end{proof}

\subsection{Orthogonal decomposition of a chain of solitons}

We now turn to the proof of Proposition \ref{prop:stadec}. One key property in the proof is the exponential decay of the functions $\eta_c$ and $v_c$. As a matter of fact, one can derive from formulae \eqref{form:etavc} that, given any integer $p$, there exists a positive constant $K$, depending only on $p$, such that
\begin{equation}
\label{est:etavc}
\sum_{0 \leq j + k \leq p} \Big( |\partial_x^j \partial_c^k \eta_c(x)| + c^{1 + 2 j + 2 k} |\partial_x^j \partial_c^k v_c(x)| \Big) \leq K (2 - c^2) \exp \big( - (2 - c^2)^\frac{1}{2} |x| \big).
\end{equation}
for any $0 < |c| < \sqrt{2}$ and $x \in \R$. A crucial consequence of \eqref{est:etavc} is the property that two solitons with same speed, as well as two sums of solitons with same speeds, can be closed in $X(\R)$ only if their center(s) of mass are closed. More precisely, we have

\begin{lemma}
\label{lem:diffR}
Let $\gc = (c_1, \ldots, c_N) \in \Adm_N$. Given a positive number $\delta$, there exist two positive numbers $\beta$ and $M$ such that, if
$$\| R_{\gc, \ga} - R_{\gc, \gb} \|_X < \beta,$$
for some positions $\ga \in \Pos_N(M)$ and $\gb \in \Pos_N(M)$, then,
$$\sum_{k = 1}^N |a_k - b_k| < \delta.$$
\end{lemma}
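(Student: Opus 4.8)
The plan is to argue by contradiction, combining a concentration--compactness reduction with the exponential decay \eqref{est:etavc} and the large separation imposed by membership in $\Pos_N(M)$, and to induct on the number $N$ of solitons. Suppose the statement fails for some $\delta_0 > 0$. Then there are sequences $M_n \to +\infty$ and $\beta_n \to 0$, and positions $\ga^n,\gb^n \in \Pos_N(M_n)$, with
$$\big\| R_{\gc,\ga^n} - R_{\gc,\gb^n} \big\|_X < \beta_n \qquad \text{and} \qquad \sum_{k=1}^N |a_k^n - b_k^n| \geq \delta_0 .$$
Both quantities are invariant under the simultaneous shift $(\ga,\gb) \mapsto (\ga+s,\gb+s)$, so I normalize $a_1^n = 0$. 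For the pointwise analysis I only use the first component: since $H^1(\R) \hookrightarrow L^\infty(\R)$, the $X$-smallness yields $\|\eta_{\gc,\ga^n} - \eta_{\gc,\gb^n}\|_{L^\infty} \to 0$, where each $\eta_{\gc,\ga^n} = \sum_{k} \eta_{c_k}(\cdot - a_k^n)$ is a sum of $N$ positive bumps, every $\eta_{c_k}$ having by \eqref{form:etavc} a unique, strict maximum $(2-c_k^2)/2$ at its center and decaying exponentially.

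The crux is to locate the leftmost $\gb$-soliton. Evaluating at $x = a_1^n = 0$ and using \eqref{est:etavc} together with $M_n \to +\infty$, the contributions of the solitons of index $k \geq 2$ are negligible, so $\eta_{\gc,\ga^n}(0) \to (2-c_1^2)/2 > 0$, whence also $\eta_{\gc,\gb^n}(0) \to (2-c_1^2)/2$. Since the centers $b_k^n$ are $M_n$-separated, at most one $\gb$-bump can be within bounded distance of $0$, which forces the leftmost center $b_1^n$ to remain bounded: if $b_1^n \to +\infty$ then $\eta_{\gc,\gb^n}(0) \to 0$, a contradiction, while if $b_1^n \to -\infty$ the leftmost $\gb$-bump produces a point far to the left where $\eta_{\gc,\gb^n} \simeq (2-c_1^2)/2$ but $\eta_{\gc,\ga^n}$ is exponentially small (all $a_k^n \geq 0$), again contradicting uniform smallness. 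Passing to a subsequence, $b_1^n \to d_1$, and then $b_j^n \geq b_1^n + (j-1)M_n \to +\infty$ for $j \geq 2$.

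I now identify $d_1$. As $n \to +\infty$, all solitons of index $\geq 2$ in both sums escape to $+\infty$, so locally uniformly $\eta_{\gc,\ga^n} \to \eta_{c_1}$ and $\eta_{\gc,\gb^n} \to \eta_{c_1}(\cdot - d_1)$; the uniform vanishing of the difference then gives $\eta_{c_1} = \eta_{c_1}(\cdot - d_1)$, and the uniqueness of the maximum forces $d_1 = 0$, i.e. $b_1^n \to 0 = a_1^n$. Consequently $\| Q_{c_1,a_1^n} - Q_{c_1,b_1^n}\|_X = \| Q_{c_1} - Q_{c_1,b_1^n}\|_X \to 0$, and subtracting these two solitons shows that the remaining sums of $N-1$ solitons, with speeds $(c_2,\ldots,c_N)$ and centers $(a_2^n,\ldots,a_N^n)$, $(b_2^n,\ldots,b_N^n) \in \Pos_{N-1}(M_n)$, still differ by $o(1)$ in $X$. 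The induction hypothesis yields $\sum_{k=2}^N |a_k^n - b_k^n| \to 0$, and together with $|a_1^n - b_1^n| \to 0$ this contradicts $\sum_k |a_k^n - b_k^n| \geq \delta_0$. The base case $N=1$ is immediate, since $d \mapsto \| Q_{c_1} - Q_{c_1}(\cdot - d)\|_X$ is continuous, vanishes only at $d = 0$, and tends to $\sqrt{2}\,\|Q_{c_1}\|_X > 0$ as $|d| \to +\infty$, hence is bounded away from $0$ outside any neighbourhood of the origin.

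The main obstacle is the second paragraph: pinning down the leftmost $\gb$-soliton, i.e. ruling out both that it drifts to infinity and that it fails to pair with the leftmost $\ga$-soliton. This is exactly where the ordering built into $\Pos_N(M_n)$, the uniform lower bound $(2-c_k^2)/2$ on the bump heights, and the quantitative exponential decay \eqref{est:etavc} are needed; the subsequent rigidity (uniqueness of the maximum of $\eta_{c_1}$) and the reduction to $N-1$ solitons are then routine.
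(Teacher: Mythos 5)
Your argument is correct, and its skeleton is the one the paper uses: induction on $N$, a contradiction argument along sequences with $\beta\to 0$ and $M\to+\infty$, isolation of one extreme soliton whose center is shown to converge to its counterpart, and removal of that soliton so that the induction hypothesis applies to the remaining $N-1$ bumps. Where you genuinely diverge is in the execution of the localization step. The paper pins down the \emph{rightmost} soliton (normalizing $b_N=0$, $a_N\le 0$) by estimating the $X$-norm of $R_{\gc,\ga}-R_{\gc,\gb}$ restricted to half-lines such as $(-L_j/2,+\infty)$ and using the quantitative decay \eqref{est:etavc} to show that a runaway center would leave an unmatched soliton of norm $\|Q_{c_N}\|_X$ up to exponentially small errors. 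You instead pin down the \emph{leftmost} soliton via the embedding $H^1(\R)\hookrightarrow L^\infty(\R)$ applied to the $\eta$-components, exploiting that each $\eta_{c_k}$ is positive with a strict maximum $(2-c_k^2)/2$ attained only at its center; pointwise evaluation at $x=a_1^n$ and at $x=b_1^n$ rules out escape to $\pm\infty$, and the locally uniform limits identify $b_1^n\to a_1^n$ through the uniqueness of the maximum of $\eta_{c_1}$. Your route is more elementary (no norm computations on half-lines), at the price of leaning on the sign and shape of $\eta_c$ given by \eqref{form:etavc}, whereas the paper's half-line estimates would also work for sign-changing profiles; your single diagonal sequence is a legitimate simplification of the paper's double-indexed one, and your base case (continuity of $d\mapsto\|Q_{c}-Q_{c}(\cdot-d)\|_X$, positivity off $d=0$, and positive limit at infinity) is the same argument as the paper's Step 1 in slightly different form.
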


\begin{proof}
The proof is by induction on the integer $N$. When $N = 1$, we have

\setcounter{step}{0}
\begin{step}
\label{D1}
Given a positive number $\delta$, there exists a positive number $\alpha$ such that, if
$$\| Q_{c, a} - Q_{c, b} \|_X < \alpha,$$
then,
$$|a - b| < \delta.$$
\end{step}

The proof is by contradiction. Assuming that Step \ref{D1} is false, there exist a positive number $\delta$ and a sequence $(a_n)_{n \in \N} \in \R^\N$ such that
\begin{equation}
\label{nallet}
\| Q_c(\cdot - a_n) - Q_c \|_X \to 0,
\end{equation}
as $n \to + \infty$, and simultaneously, $|a_n| \geq \delta$ for any $n \in \N$. If the sequence $(a_n)_{n \in \N}$ were unbounded, then, up to some subsequence, it would tend to $+ \infty$, or $- \infty$. In any case, taking the limit $n \to + \infty$ into \eqref{nallet}, we would get that
$2 \| Q_c \|_X = 0,$
which is not possible. As a consequence, the sequence $(a_n)_{n \in \N}$ is bounded. Up to a subsequence, it converges to a real number $a$ such that $|a| \geq \delta$, and simultaneously,
$$\| Q_c(\cdot - a) - Q_c \|_X = 0.$$
This identity implies that $Q_c(\cdot - n a) = Q_c$ for any $n \in \N$. Since $a \neq 0$, this ensures, taking the limit $n \to + \infty$ and invoking \eqref{est:etavc}, that $Q_c = 0$, which provides the desired contradiction.

\begin{step}
End of the proof.
\end{step}

We now assume that the conclusions of Lemma \ref{lem:diffR} are available up to the integer $N - 1$, and that they are not for the integer $N$. Then, there exist a positive number $\delta$, a sequence $(L_j)_{j \in \N}$ tending to $+ \infty$ as $j \to + \infty$, and two sequences $(\ga^{(n, j)})_{(n, j) \in \N^2}$ and $(\gb^{(n, j)})_{(n, j) \in \N^2}$ such that
$$\ga^{(n, j)} \in \Pos_N(L_j) \quad {\rm and} \quad \gb^{(n, j)} \in \Pos_N(L_j),$$
for any $n \in \N$,
\begin{equation}
\label{thion1}
\| R_{\gc, \ga^{(n, j)}} - R_{\gc, \gb^{(n, j)}} \|_{X(\R)} \to 0,
\end{equation}
as $n \to + \infty$, and simultaneously,
\begin{equation}
\label{thion2}
\sum_{k = 1}^N \big| a_k^{(n, j)} - b_k^{(n, j)} \big| > \delta.
\end{equation}
Without loss of generality, we can moreover assume that $a_N^{(n, j)} \leq b_N^{(n, j)} = 0$.

In this situation, if the sequence $(a_N^{(n, j)})_{n \in \N}$ were unbounded for an integer $j$, then, up to a subsequence, it would tend to $- \infty$ as $n \to + \infty$. At this stage, we can invoke \eqref{est:etavc} to certify the existence of a universal constant $K$ such that
\begin{align*}
\| R_{\gc, \ga^{(n, j)}} - R_{\gc, \gb^{(n, j)}} \|_{X(\R)} \geq & \| R_{\gc, \ga^{(n, j)}} - R_{\gc, \gb^{(n, j)}} \|_{X((- L_j/2, + \infty))}\\
\geq & \| Q_{c_N} \|_{X(\R)} - K \sum_{k = 1}^N \frac{2 - c_k^2}{c_k} \bigg( \int_{|x| \geq L_j/2} \exp \big( - 2 (2 - c_k^2)^\frac{1}{2} |x| \big) \, dx \bigg)^\frac{1}{2},
\end{align*}
for $n$ large enough. In the limit $n \to + \infty$, we deduce combining with \eqref{thion1} that
$$\| Q_{c_N} \|_{X(\R)} \leq K \sum_{k = 1}^N \frac{(2 - c_k^2)^\frac{3}{4}}{c_k} \exp \big( - (2 - c_k^2)^\frac{1}{2} L_j \big),$$
which is again not possible when $j \to + \infty$. Hence, there exists an integer $J$ such that, given any $j \geq J$, the sequence $(a_N^{(n, j)})_{n \in \N}$ is bounded. Up to some subsequence, it converges to a non-positive number $\alpha^j$. 

Assume next that $\alpha^j \leq - L_j/2$. For $n$ large enough, we can rely on \eqref{est:etavc} to write
\begin{align*}
\sum_{\ell = 0}^1 \big| \partial_x^\ell \eta_{\gc, \ga^{(n, j)}}(x) & - \partial_x^\ell \eta_{\gc, \gb^{(n, j)}}(x) + \partial_x^\ell \eta_{c_N}(x) \big|\\
+ \big| v_{\gc, \ga^{(n, j)}}(x) & - v_{\gc, \gb^{(n, j)}}(x) + v_{c_N}(x) \big| \leq K \sum_{k = 1}^N \frac{2 - c_k^2}{c_k} \exp \Big( - (2 - c_k^2)^\frac{1}{2} \Big( x - \frac{3 L_j}{8} \Big) \Big),
\end{align*}
for any $x \geq - L_j/8$. As above, this gives
$$\| Q_{c_N} \|_{X(\R)} \leq \| R_{\gc, \ga^{(n, j)}} - R_{\gc, \gb^{(n, j)}} \|_{X(\R)} + K \sum_{k = 1}^N \frac{(2 - c_k^2)^\frac{3}{4}}{c_k} \exp \Big( - \frac{(2 - c_k^2)^\frac{1}{2} L_j}{2} \Big),$$
which is not possible at the limit $n \to + \infty$, when $L_j$ is large enough. Therefore, we can assume, up to a further choice of $J$, that $\alpha^j \geq - L_j/2$ for any $j \geq J$.

Invoking once again \eqref{est:etavc}, we next write
\begin{align*}
\sum_{\ell = 0}^1 & \big| \partial_x^\ell \eta_{\gc, \ga^{(n, j)}}(x) - \partial_x^\ell \eta_{\gc, \gb^{(n, j)}}(x) - \partial_x^\ell \eta_{c_N, a_N^{(n, j)}}(x) + \partial_x^\ell \eta_{c_N}(x) \big|\\
+ & \big| v_{\gc, \ga^{(n, j)}}(x) - v_{\gc, \gb^{(n, j)}}(x) - v_{c_N, a_N^{(n, j)}}(x) + v_{c_N}(x) \big| \leq K \sum_{k = 1}^{N - 1} \frac{2 - c_k^2}{c_k} \exp \big( - (2 - c_k^2)^\frac{1}{2} (x - L_j) \big),
\end{align*}
for any $x \geq - 3 L_j/4$, so that
\begin{equation}
\label{millo}
\big\| Q_{c_N, a_N^{(n, j)}} - Q_{c_N} \big\|_{X(\R)} \leq \big\| R_{\gc, \ga^{(n, j)}} - R_{\gc, \gb^{(n, j)}} \big\|_{X(\R)} + K \sum_{k = 1}^{N - 1} \frac{(2 - c_k^2)^\frac{3}{4}}{c_k} \exp \Big( - \frac{(2 - c_k^2)^\frac{1}{2} L_j}{2} \Big).
\end{equation}
The expression in the right-hand side of \eqref{millo} tends to $0$ when $n \to + \infty$ and $L_j \to + \infty$. Up to a further choice of $J$, one can invoke Step \ref{D1} to show that
\begin{equation}
\label{chluski}
\big| a_N^{(j, n)} \big| \leq \frac{\delta}{4},
\end{equation}
for any $j \geq J$ and any $n \geq n_j$. Moreover, one can rephrase \eqref{millo} as
\begin{equation}
\label{pierre}
\begin{split}
\Big\| \sum_{k = 1}^{N - 1} \big( Q_{c_k, a_k^{(n, j)}} & - Q_{c_k, b_k^{(n, j)}} \big) \Big\|_{X(\R)}\\
& \leq 2 \| R_{\gc, \ga^{(n, j)}} - R_{\gc, \gb^{(n, j)}} \|_{X(\R)} + K \sum_{k = 1}^N \frac{(2 - c_k^2)^\frac{3}{4}}{c_k} \exp \Big( - \frac{(2 - c_k^2)^\frac{1}{2} L_j}{2} \Big).
\end{split}
\end{equation}
Once again, the expression in the right-hand side of \eqref{pierre} tends to $0$ when $n \to + \infty$ and $L_j \to + \infty$. Since Lemma \ref{lem:diffR} is true for the integer $N - 1$, we deduce that
$$\sum_{k = 1}^{N - 1} \big| a_k^{(j, n)} - b_k^{(j, n)} \big| \leq \frac{\delta}{4},$$
for any $j$ and $n$ large enough. Combining with \eqref{chluski}, we obtain a contradiction with \eqref{thion2}. Lemma \ref{lem:diffR} follows by induction on the integer $N$.
\end{proof}

Estimate \eqref{est:etavc} and Lemma \ref{lem:diffR} are enough to consider the proof of Proposition \ref{prop:stadec}.

\begin{proof}[Proof of Proposition \ref{prop:stadec}]
The proof is reminiscent from \cite{MarMeTs2}. It relies on the quantified version of the implicit function theorem provided by Proposition \ref{prop:implicit}. We consider the map $\Xi$ defined by
$$\Xi(\eta, v, \sigma, \gb) := \Big( \langle \eps, \partial_x Q_{\sigma_1, b_1} \rangle_{L^2}, \ldots, \langle \eps, \partial_x Q_{\sigma_N, b_N} \rangle_{L^2}, P'(Q_{\sigma_1, b_1})(\eps), \ldots, P'(Q_{\sigma_N, b_N})(\eps) \Big).$$
In order to simplify the notation, we have set here $\eps := (\eta, v) - R_{\sigma, \gb}$. The map $\Xi$ is well-defined and smooth from $X(\R) \times \Adm_N \times \R^N$ to $\R^{2 N}$. Moreover, it fulfills the assumptions of Proposition \ref{prop:implicit} so that we can state

\setcounter{step}{0}
\begin{step}
\label{I1}
Let $0 < \tau < 1$ and set
$$\Adm_N(\tau) := \big\{ \gc \in (- \sqrt{2}, \sqrt{2})^N, \ {\rm s.t.} \ \mu_{\gc} > \tau \ {\rm and} \ \nu_{\gc} > \tau \big\}.$$
There exist positive numbers $\delta$, $\Lambda$ and $M$, depending only on $\tau$, such that, given any $(\gc, \ga) \in \Adm_N(\tau) \times \Pos_N(M)$, there exists a map $\gamma_{\gc, \ga} \in \boC^1(B(R_{\gc, \ga}, \delta), \Adm_N(\tau/2) \times \R^N)$ such that, given any $(\eta, v) \in B(R_{\gc, \ga}, \delta)$, $(\sigma, \gb) = \gamma_{\gc, \ga}(\eta, v)$ is the unique solution in $B((\gc, \ga), \Lambda \delta)$ of the equation
$$\Xi(\eta, v, \sigma, \gb) = 0.$$
Moreover, the map $\gamma_{\gc, \ga}$ is Lipschitz on $B(R_{\gc, \ga}, \delta)$, with Lipschitz constant at most $\Lambda$.
\end{step}

For $\gc \in \Adm_N(\tau)$ and $\ga \in \R^N$, we check that
$$\Xi(R_{\gc, \ga}, \gc, \ga) = 0.$$
Similarly, we compute
$$\Bigg\{ \begin{array}{ll}
\partial_{\sigma_j} \Xi_k(R_{\gc, \ga}, \gc, \ga) = - \langle \partial_c Q_{c_j, a_j}, \partial_x Q_{c_k, a_k} \rangle_{L^2},\\
\partial_{b_j} \Xi_k(R_{\gc, \ga}, \gc, \ga) = \langle \partial_x Q_{c_j, a_j}, \partial_x Q_{c_k, a_k} \rangle_{L^2},
\end{array}$$
as well as
$$\Bigg\{ \begin{array}{ll}
\partial_{\sigma_j} \Xi_{N + k}(R_{\gc, \ga}, \gc, \ga) = - P'(Q_{c_k, a_k})(\partial_c Q_{c_j, a_j}),\\
\partial_{b_j} \Xi_{N + k}(R_{\gc, \ga}, \gc, \ga) = P'(Q_{c_k, a_k})(\partial_x Q_{c_j, a_j}),
\end{array}$$
for any $1 \leq j, k \leq N$. When $j = k$, we infer from formulae \eqref{form:etavc} that
\begin{equation}
\label{barcella}
\partial_{\sigma_k} \Xi_k(R_{\gc, \ga}, \gc, \ga) = \partial_{a_k} \Xi_{N + k}(R_{\gc, \ga}, \gc, \ga) = 0,
\end{equation}
and
$$\partial_{a_k} \Xi_k(R_{\gc, \ga}, \gc, \ga) = \| \partial_x Q_{c_k} \|_{L^2}^2 = \frac{(2 - c_k^2)^\frac{3}{2}}{3} > 0,$$
whereas by \eqref{def:Pc},
$$\partial_{\sigma_k} \Xi_{N + k}(R_{\gc, \ga}, \gc, \ga) = - \frac{d}{dc} \big( P(Q_c) \big)_{|c = c_k} = - (2 - c_k^2)^\frac{1}{2} < 0.$$
In particular, the diagonal matrix $A_\gc$ with the same diagonal elements as $d_{\sigma, \gb} \Xi(R_{\gc, \ga}, \gc, \ga)$ is a continuous isomorphism from $\R^{2 N}$ to $\R^{2 N}$, with operator norm bounded from below by $\tau^3/3$.

On the other hand, when $j \neq k$, it follows from \eqref{est:etavc} that
\begin{align*}
\big| \langle \partial_c Q_{c_j, a_j}, \partial_x Q_{c_k, a_k} \rangle_{L^2} \big| + \big| \langle \partial_x Q_{c_j, a_j}, \partial_x Q_{c_k, a_k} & \rangle_{L^2} \big|\\
& + \big| P'(Q_{c_k, a_k})(\partial_c Q_{c_j, a_j}) \big| + \big| P'(Q_{c_k, a_k})(\partial_x Q_{c_j, a_j}) \big|\\
\leq K^2 (2 - c_j^2)^\frac{1}{2} (2 - c_k^2)^\frac{1}{2} \Big( 1 & + \frac{1}{c_j^3 c_k^3} \Big) \int_\R \exp \big( - \nu_{j, k} (|x - a_j| + |x - a_k|) \big) dx\\
= K^2 (2 - c_j^2)^\frac{1}{2} (2 - c_k^2)^\frac{1}{2} \Big( 1 & + \frac{1}{c_j^3 c_k^3} \Big) \Big( |a_j - a_k| + \frac{1}{\nu_{j, k}} \Big) \exp \big( - \nu_{j, k} |a_j - a_k| \big),
\end{align*}
where we have set $\nu_{j, k} := \min \{ (2 - c_j^2)^\frac{1}{2}, (2 - c_k^2)^\frac{1}{2} \}$. Assuming that $\ga \in \Pos_N(L)$ for some positive number $L$, there exists a positive constant $K_\tau$, depending only on $\tau$, such that
\begin{align*}
|\partial_{\sigma_j} \Xi_k(R_{\gc, \ga}, \gc, \ga)| & + |\partial_{a_j} \Xi_k(R_{\gc, \ga}, \gc, \ga)|\\
& + |\partial_{\sigma_j} \Xi_{N + k}(R_{\gc, \ga}, \gc, \ga)| + |\partial_{a_j} \Xi_{N + k}(R_{\gc, \ga}, \gc, \ga)| \leq K_\tau (L + 1) \exp \big( - \nu_\gc L \big).
\end{align*}
Combining with \eqref{barcella}, we deduce when $L$ is large enough, that
$$d_{\sigma, \gb} \Xi(R_{\gc, \ga}, \gc, \ga) := A_\gc \big( Id + H(\gc, \ga) \big),$$
where the operator norm of $H(\gc, \ga)$ is less than $1/2$.

We now turn to the differential
\begin{align*}
d_{\eta, v} \Xi(R_{\gc, \ga}, \gc, \ga)(f, g) := \Big( \langle (f, g), \partial_x Q_{c_1, a_1} \rangle_{L^2}, & \ldots, \langle (f, g), \partial_x Q_{c_N, a_N} \rangle_{L^2},\\
P'(Q_{c_1, a_1})(f, g), & \ldots, P'(Q_{c_N, a_N})(f, g) \Big).
\end{align*}
Since the operator norm of $A_\gc^{- 1}$ is bounded by $3/\tau^3$, we infer from \eqref{est:etavc} that $d_{\eta, v} \Xi(R_{\gc, \ga}, \gc, \ga)$ may be written as
$$d_{\eta, v} \Xi(R_{\gc, \ga}, \gc, \ga) = A_\gc T_{\gc, \ga},$$
where $T_{\gc, \ga}$ is a continuous linear mapping from $X(\R)$ to $\R^N$ with operator norm depending only on $\tau$.

Finally, again by \eqref{est:etavc}, the operator norm of the second order differential $d^2 \Xi(\eta, v, \sigma, \gb)$ is bounded by a constant $K_\tau$, depending only on $\tau$, when $(\eta, v, \sigma, \gb) \in X(\R) \times \Adm_N(\tau/2) \times \R^N$. It then remains to notice that Assumption $(iv)$ of Proposition \ref{prop:implicit} is satisfied when $U = \Adm_N(\tau)$ and $V = \Adm_N(\tau/2)$, and to apply Proposition \ref{prop:implicit} to the map $\Xi$, in order to obtain the statements in Step \ref{I1}.

\begin{step}
End of the proof.
\end{step}

Let $\tau = \min \{ \mu_\gc^*/2, \nu_\gc^*/2 \}$. We denote $\delta_1$, $\Lambda_1$ and $M_1$, the constants provided by Step \ref{I1}, and $\beta_1$, the number provided by Lemma \ref{lem:diffR} for $\delta = \Lambda_1 \delta_1/3$. We set $\alpha_1 := \min \{ \delta_1/3, \beta_1/4 \}$ and $L_1 := M_1$. When $(\eta, v) \in \boU_{\gc^*}(\alpha_1, L_1)$, there exists $\gb \in \Pos_N(L_1)$ such that $(\eta, v) \in B(R_{\gc^*, \gb}, 2 \alpha_1)$. Since $2 \alpha_1 \leq \delta_1$ and $L_1 = M_1$, we deduce from Step \ref{I1} that the numbers $\gc$ and $\ga$ given by
$$(\gc, \ga) = \gamma_{\gc^*, \gb}(\eta, v),$$ 
are well-defined, so that we can set $\gC(\eta, v) = \gc$ and $\gA(\eta, v) = \ga$. We claim that $\gc$ and $\ga$ do not depend on the choice of $\gb \in \Pos_N(L_1)$ such that $(\eta, v) \in B(R_{\gc^*, \gb}, 2 \alpha_1)$. As a consequence, the maps $\gC$ and $\gA$ are well-defined on $\boU_{\gc^*}(\alpha_1, L_1)$ with values in $\Adm_N(\tau/2) \subset \Adm_N$, respectively, $\R^N$.

Indeed, given another choice of $\gb_2 \in \Pos_N(L_1)$ such that $(\eta, v) \in B(R_{\gc^*, \gb_2}, 2 \alpha_1)$, we have
$$\| R_{\gc^*, \gb} - R_{\gc^*, \gb_2} \|_X < 4 \alpha_1 \leq \beta_1,$$
so that, by Lemma \ref{lem:diffR},
\begin{equation}
\label{parra}
\big| \gb - \gb_2 \big| < \frac{\Lambda_1 \delta_1}{3}.
\end{equation}
On the other hand, the map $\gamma_{\gc^*, \gb}$ is Lipschitz on $B(R_{\gc^*, \gb}, 2 \alpha_1)$, with Lipschitz constant at most $\Lambda_1$. Hence,
$$\big| (\gc, \ga) - (\gc^*, \gb) \big| = \big| \gamma_{\gc^*, \gb}(\eta, v) - \gamma_{\gc^*, \gb}(R_{\gc^*, \gb}) \big| \leq \Lambda_1 \alpha_1 \leq \frac{\Lambda_1 \delta_1}{3}.$$
Combining with \eqref{parra}, we obtain
$$\big| (\gc, \ga) - (\gc^*, \gb_2) \big| < \Lambda_1 \delta_1.$$
Since $\Xi(\eta, v, \gc, \ga) = 0$, we deduce from Step \ref{I1} that $(\gc, \ga) = \gamma_{\gc^*, \gb_2}(\eta, v)$, so that $\gc$ and $\ga$ do not depend on the choice of $\gb \in \Pos_N(L_1)$ such that $(\eta, v) \in B(R_{\gc^*, \gb}, 2 \alpha_1)$.

Concerning the smoothness of $\gC$ and $\gA$, we consider $(\mu, w) \in \boU_{\gc^*}(\alpha_1, L_1)$ such that
$$\| (\mu, w) - (\eta, v) \|_X < \alpha_1.$$
Given $\gb \in \Pos_N(L_1)$ such that $(\eta, v) \in B(R_{\gc^*, \gb}, \alpha_1)$, we deduce that
$$\| (\mu, w) - R_{\gc^*, \gb} \|_X < 2 \alpha_1,$$
so that, by definition,
$$\big( \gC(\eta, v), \gA(\eta, v) \big) = \gamma_{\gc^*, \gb}(\eta, v) \quad {\rm and} \quad (\gC(\mu, w), \gA(\mu, w)) = \gamma_{\gc^*, \gb}(\mu, w).$$
At this stage, recall that the map $\gamma_{\gc^*, \gb}$ is of class $\boC^1$ on $B(R_{\gc^*, \gb}, 2 \alpha_1)$. As a consequence, the functions $\gC$ and $\gA$ are also of class $\boC^1$ on $B((\eta, v), \alpha_1) \cap \boU_{\gc^*}(\alpha_1, L_1)$, therefore, on $\boU_{\gc^*}(\alpha_1, L_1)$.

We next consider the proof of \eqref{est:stamodul1}. When $(\eta, v) \in B(R_{\gc^*, \ga^*}, \alpha)$, with $\ga^* \in \Pos_N(L)$, $\alpha \leq \alpha_1$ and $L \geq L_1$, the values of $\gC(\eta, v)$ and $\gA(\eta, v)$ are equal to
$$(\gC(\eta, v), \gA(\eta, v)) = \gamma_{\gc^*, \ga^*}(\eta, v).$$
In view of the Lipschitz continuity on $B(R_{\gc^*, \ga^*}, \alpha)$ of the map $\gamma_{\gc^*, \ga^*}$, we infer that
\begin{equation}
\label{trinhduc}
|\gc - \gc^*| + |\ga - \ga^*| \leq \Lambda_1 \| (\eta, v) - R_{\gc^*, \ga^*} \|_X \leq \Lambda_1 \alpha.
\end{equation}
On the other hand, letting $\eps = (\eta, v) - R_{\gc, \ga}$, we have
\begin{equation}
\label{skrela}
\| \eps \|_X \leq \| (\eta, v) - R_{\gc^*, \ga^*} \|_X + \| R_{\gc^*, \ga^*} - R_{\gc, \ga} \|_X < \alpha + \| R_{\gc^*, \ga^*} - R_{\gc, \ga} \|_X.
\end{equation}
In view of formulae \eqref{form:etavc}, there exists a universal constant $K$ such that
\begin{align*}
\| R_{\gc^*, \ga^*} - R_{\gc, \ga} \|_X & \leq \int_0^1 \Big\| \partial_\sigma R_{\gc + t (\gc^* - \gc), \ga + t (\ga^* - \ga)} (\gc^* - \gc) + \partial_\ga^* R_{\gc + t (\gc^* - \gc), \ga + t (\ga^* - \ga)}(\ga^* - \ga) \Big\|_X dt\\
& \leq K \big| (\gc^*, \ga^*) - (\gc, \ga) \big|.
\end{align*}
Combining with \eqref{trinhduc} and \eqref{skrela}, we are led to
$$\| \eps \|_X < \big( 1 + K \Lambda_1 \big) \alpha.$$
In view of \eqref{trinhduc}, it is sufficient to set $K_1 := 1 + (K + 1) \Lambda_1$ in order to derive \eqref{est:stamodul1}.

Finally, conditions \eqref{cond:staorth} are direct consequences of the definitions of the maps $\gamma_{\gc^*, \gb}$. This completes the proof of Proposition \ref{prop:stadec}.
\end{proof}

Corollary \ref{cor:toutvabien} is a direct consequence of Proposition \ref{prop:stadec}.

\begin{proof}[Proof of Corollary \ref{cor:toutvabien}]
Let $(\eta, v) \in \boU_{\gc^*}(\alpha_1, L)$, with $L \geq L_1$, and set $(\gc, \ga) := (\gC(\eta, v), \gA(\eta, v))$ as in the proof of Proposition \ref{prop:stadec}. In view of \eqref{est:stamodul1}, one can decrease $\alpha_1$ so that
$$|\gc - \gc^*| \leq \min \Big\{ \frac{\mu_{\gc^*}}{2}, \frac{\nu_{\gc^*}^2}{8 \sqrt{2}} \Big\},$$
which is enough to obtain \eqref{eq:ouf1} and \eqref{eq:ouf2}.

One can next increase $L_1$ so that $L_1 > 2$, and decrease again $\alpha_1$ so that $K_1 \alpha_1 \leq 1/2$. In this case, we can deduce from \eqref{est:stamodul1} that
$$|\ga - \ga^*| \leq \frac{1}{2},$$
for any $\ga^* \in \Pos_N(L)$ such that
$$\| (\eta, v) - R_{\gc^*, \ga^*} \|_X \leq \alpha.$$
It follows that $\ga$ belongs to $\Pos_N(L - 1)$ as mentioned in \eqref{est:stamodul2}.

Finally, one can infer from formula \eqref{form:etavc} that
$$\min \Big\{ 1 - \eta_c(x), \ x \in \R \Big\} = 1 - \eta_c(0) = \frac{c^2}{2},$$
for any $c \in (- \sqrt{2}, \sqrt{2})$. Combining with the exponential decay of the functions $\eta_c$ provided by \eqref{est:etavc}, we deduce that we can enlarge again $L_1$ so that
\begin{equation}
\label{eq:minsumeta}
\min \Big\{ 1 - \eta_{\gc, \ga}(x), \ x \in \R \Big\} \geq \frac{\mu_{\gc^*}^2}{4},
\end{equation}
when $\gc$ satisfies \eqref{eq:ouf2} and $\ga \in \Pos_N(L_1 - 1)$. On the other hand, we can a last time decrease $\alpha_1$ so that, combining \eqref{est:stamodul1} with the Sobolev embedding theorem, we have
$$\| \eps_\eta \|_{L^\infty(\R)} \leq \frac{\mu_{\gc^*}^2}{8}.$$
Since $\eta = \eta_{\gc, \ga} + \eps_\eta$, this is sufficient to guarantee that $\eta$ satisfies \eqref{eq:ouf3}, so that the pair $(\eta, v)$ belongs to $\boN\boV(\R)$. This completes the proof of Corollary \ref{cor:toutvabien}.
\end{proof}

\subsection{Almost minimizing properties close to a sum of solitons}

The main goal of this subsection is to establish the almost minimizing properties close to a sum of solitons 
stated in Subsection \ref{sub:decomp} of the introduction. When establishing the estimates of $E$ and $P_k$ given by \eqref{eq:decE} and \eqref{eq:decp}, we will make use of the following elementary inequality to bound various interaction terms using \eqref{est:etavc}.

\begin{lemma}
\label{lem:identite}
Let $(a, b) \in \R^2$, with $a < b$, $(\nu_a, \nu_b) \in (0, + \infty)^2$, and set $y^\pm := \max \{ \pm y, 0 \}$. Then,
$$\Big\| \exp \big( - \nu_a (\cdot - a)^+ \big) \exp \big( - \nu_b (\cdot - b)^- \big) \Big\|_{L^p} \leq \Big( \frac{2}{p \min \{ \nu_a, \nu_b \}} + b - a \Big)^\frac{1}{p} \exp \big( - \min \{ \nu_a, \nu_b \} (b - a) \big),$$
for any $1 \leq p \leq + \infty$. 
\end{lemma}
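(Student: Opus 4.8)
The plan is to split the real line into the three regions dictated by the definitions of the positive and negative parts, namely $(-\infty, a)$, $[a,b]$ and $(b, +\infty)$, and to compute or estimate the integrand explicitly on each. Writing $\nu := \min\{\nu_a, \nu_b\}$ and $f(x) := \exp(-\nu_a (x-a)^+)\exp(-\nu_b (x-b)^-)$, one checks directly from the definitions that $f(x) = \exp(-\nu_b(b-x))$ for $x < a$, that $f(x) = \exp(-\nu_a(x-a) - \nu_b(b-x))$ for $a \le x \le b$, and that $f(x) = \exp(-\nu_a(x-a))$ for $x > b$.

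The single pointwise estimate driving the whole argument is that on the middle interval the exponent satisfies $\nu_a(x-a) + \nu_b(b-x) \ge \nu\big((x-a)+(b-x)\big) = \nu(b-a)$, since both $(x-a)$ and $(b-x)$ are non-negative there while both coefficients are at least $\nu$; hence $f(x) \le \exp(-\nu(b-a))$ for $a \le x \le b$. Outside $[a,b]$ the function decays even faster, so this also gives $\|f\|_{L^\infty} = \exp(-\nu(b-a))$, which is precisely the asserted bound in the limiting case $p = +\infty$, where $(2/(p\nu) + b-a)^{1/p} \to 1$.

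For finite $p$, I would compute the two outer contributions to $\int_\R f^p$ exactly, as tails of decreasing exponentials: the change of variables $u = b - x$ gives $\int_{-\infty}^a \exp(-p\nu_b(b-x))\,dx = \tfrac{1}{p\nu_b}\exp(-p\nu_b(b-a))$, and similarly $u = x - a$ gives $\int_b^{+\infty}\exp(-p\nu_a(x-a))\,dx = \tfrac{1}{p\nu_a}\exp(-p\nu_a(b-a))$. The middle contribution is bounded, via the pointwise estimate above, by $(b-a)\exp(-p\nu(b-a))$.

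It then only remains to unify the three exponents: since $\nu_a, \nu_b \ge \nu$ one has $\exp(-p\nu_b(b-a)) \le \exp(-p\nu(b-a))$ and $\tfrac{1}{p\nu_b} \le \tfrac{1}{p\nu}$, and likewise with the subscript $a$, whence $\|f\|_{L^p}^p \le \big(\tfrac{2}{p\nu} + (b-a)\big)\exp(-p\nu(b-a))$; taking $p$-th roots yields the claim. No genuine obstacle is expected: the only points requiring a little care are the clean treatment of $p = +\infty$ as the limiting case and the verification of the key exponent inequality on $[a,b]$, while everything else reduces to elementary explicit integration.
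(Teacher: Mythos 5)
Your proof is correct: the decomposition of $\R$ into $(-\infty,a)$, $[a,b]$, $(b,+\infty)$, the exact evaluation of the two exponential tails, the pointwise bound $\nu_a(x-a)+\nu_b(b-x)\geq \min\{\nu_a,\nu_b\}(b-a)$ on the middle interval, and the final unification of the exponents all check out, including the limiting case $p=+\infty$. The paper states this lemma as an elementary inequality without proof, and your argument is precisely the computation the authors intend.
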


We will also use the following pointwise estimates on the functions $\Phi_k$, $\Phi_{k, k + 1}$ and $\Psi_k$.

\begin{lemma}
\label{lem:estimpsik}
Let $1 \leq k \leq N$ and $x \in \R$. We have
$$\Phi_k(x) \leq \exp \Big( - 2 \tau \Big( |x - a_k| - \frac{L_1}{4} \Big)^+ \Big),$$
and
$$|1 - \Phi_k(x)| \leq \exp \Big( - 2 \tau \Big( x - a_k + \frac{L_1}{4} \Big)^+ \Big) + \exp \Big( - 2 \tau \Big( x - a_k - \frac{L_1}{4} \Big)^- \Big).$$
Similarly, for $1 \leq k \leq N - 1$,
\begin{align*}
& \Phi_{0, 1}(x) \leq \exp \Big( - 2 \tau \Big( x - a_1 + \frac{L_1}{4} \Big)^+ \Big),\\
& \Phi_{k, k + 1}(x) \leq \exp \Big( - 2 \tau \Big( x - a_k - \frac{L_1}{4} \Big)^- \Big) \exp \Big( - 2 \tau \Big( x - a_{k + 1} + \frac{L_1}{4} \Big)^+ \Big),\\
& \Phi_{N, N + 1}(x) \leq \exp \Big( - 2 \tau \Big( x - a_N - \frac{L_1}{4} \Big)^- \Big),
\end{align*}
while, for $2 \leq k \leq N - 1$,
\begin{align*}
& \Psi_1(x) - \Psi_2(x) \leq \exp \Big( - \frac{\nu_{\gc^*}}{8} \Big( x - \frac{a_1 + a_2}{2} \Big)^+ \Big),\\
& \Psi_k(x) - \Psi_{k + 1}(x) \leq \exp \Big( - \frac{\nu_{\gc^*}}{8} \Big( x - \frac{a_{k - 1} + a_k}{2} \Big)^- \Big) \exp \Big( - \frac{\nu_{\gc^*}}{8} \Big( x - \frac{a_{k - 1} + a_k}{2} \Big)^+ \Big),\\
& \Psi_N(x) - \Psi_{N + 1}(x) \leq \exp \Big( - \frac{\nu_{\gc^*}}{8} \Big( x - \frac{a_{N - 1} + a_N}{2} \Big)^+ \Big),
\end{align*}
and
\begin{align*}
& \big| 1 - \Psi_1(x) + \Psi_2(x) \big| \leq \exp \Big( - \frac{\nu_{\gc^*}}{8} \Big( x - \frac{a_1 + a_2}{2} \Big)^- \Big),\\
& \big| 1 - \Psi_k(x) + \Psi_{k + 1}(x) \big| \leq \exp \Big( - \frac{\nu_{\gc^*}}{8} \Big( x - \frac{a_{k - 1} + a_k}{2} \Big)^+ \Big) + \exp \Big( - \frac{\nu_{\gc^*}}{8} \Big( x - \frac{a_{k - 1} + a_k}{2} \Big)^- \Big),\\
& \big| 1 - \Psi_N(x) + \Psi_{N + 1}(x) \big| \leq \exp \Big( - \frac{\nu_{\gc^*}}{8} \Big( x - \frac{a_{N - 1} + a_N}{2} \Big)^+ \Big).
\end{align*}
\end{lemma}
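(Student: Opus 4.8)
The plan is to reduce every one of these pointwise bounds to the two elementary inequalities
\begin{equation*}
\frac{1}{2} \big( 1 - \th(y) \big) = \frac{1}{1 + e^{2y}} \leq \exp\big(- 2 y^+\big) \quad \text{and} \quad \frac{1}{2} \big( 1 + \th(y) \big) = \frac{1}{1 + e^{- 2y}} \leq \exp\big(- 2 y^-\big),
\end{equation*}
valid for every $y \in \R$, with $y^\pm = \max\{\pm y, 0\}$ as in Lemma~\ref{lem:identite}. Both are immediate: the equalities are the definition of $\th$, and each right-hand side is simply $\min\{1, e^{\mp 2 y}\}$. Every function appearing in the statement is, by construction, a sum or a difference of two terms of the form $\frac{1}{2}(1 \pm \th(\cdot))$, so the entire proof consists in reading off the correct combination and applying these two inequalities, using throughout the ordering $a_1 < \cdots < a_N$ coming from $\ga \in \Pos_N(L)$.

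For the one-sided functions the estimates are then immediate. The functions $\Phi_{0, 1}$ and $\Phi_{N, N + 1}$, as well as $\Psi_1 - \Psi_2$, $\Psi_N - \Psi_{N + 1} = \Psi_N$, and the boundary complements $1 - \Psi_1 + \Psi_2 = \Psi_2$ and $1 - \Psi_N + \Psi_{N + 1} = 1 - \Psi_N$, are each exactly of the form $\frac{1}{2}(1 \pm \th(\cdot))$ and fall directly under one of the two inequalities above. For $\Phi_k$ I would set $u := x - a_k$ and discard one hyperbolic tangent at a time, which yields the two crude bounds $\Phi_k \leq \frac{1}{2}(1 - \th(\tau(u - L_1/4))) \leq \exp(- 2 \tau (u - L_1/4)^+)$ and $\Phi_k \leq \frac{1}{2}(1 + \th(\tau(u + L_1/4))) \leq \exp(- 2 \tau (u + L_1/4)^-)$; splitting according to $u \geq L_1/4$, $u \leq - L_1/4$, or $|u| \leq L_1/4$ shows that the active one of these two bounds is exactly $\exp(- 2 \tau (|x - a_k| - L_1/4)^+)$. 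The complement estimate for $1 - \Phi_k$ is even simpler, since $1 - \Phi_k = \frac{1}{2}(1 - \th(\tau(u + L_1/4))) + \frac{1}{2}(1 + \th(\tau(u - L_1/4)))$ is a sum of two nonnegative terms, each controlled by one exponential; the same decomposition, after coarsening the negative part attached to the midpoint of $\Psi_{k + 1}$ into the one attached to the midpoint of $\Psi_k$ (legitimate since those midpoints are ordered), handles the interior complement $|1 - \Psi_k + \Psi_{k + 1}|$.

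The only bounds that require a genuine, if still elementary, argument are the two-sided estimates for $\Phi_{k, k + 1}$ (with $1 \leq k \leq N - 1$) and for the interior windows $\Psi_k - \Psi_{k + 1}$ (with $2 \leq k \leq N - 1$). For $\Phi_{k, k + 1} = \frac{1}{2}(\th(\tau(x - a_k - L_1/4)) - \th(\tau(x - a_{k + 1} + L_1/4)))$, discarding one tanh at a time again produces two one-sided bounds, namely $\Phi_{k, k + 1} \leq \exp(- 2 \tau (x - a_k - L_1/4)^-)$ controlling the left tail and $\Phi_{k, k + 1} \leq \exp(- 2 \tau (x - a_{k + 1} + L_1/4)^+)$ controlling the right tail. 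I would then combine them into the claimed product by a case analysis on the position of $x$ relative to the two transition points $a_k + L_1/4$ and $a_{k + 1} - L_1/4$. These are ordered, $a_k + L_1/4 < a_{k + 1} - L_1/4$, because $a_{k + 1} - a_k > L_1/2$ by the separation $\ga \in \Pos_N(L)$ with $L \geq L_1$; hence in each of the three regions (to the left of the left point, between the two points, to the right of the right point) exactly one of the two exponents is nonzero, and the matching one-sided bound coincides with the product. The window $\Psi_k - \Psi_{k + 1}$ is treated identically, using $\Psi_k - \Psi_{k + 1} \leq \Psi_k$ for the left tail and $\Psi_k - \Psi_{k + 1} \leq 1 - \Psi_{k + 1}$ for the right tail, with the transition midpoints $\frac{a_{k - 1} + a_k}{2}$ and $\frac{a_k + a_{k + 1}}{2}$ playing the role of the two knees. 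The main obstacle is therefore purely organizational: arranging the case analysis so that the two one-sided estimates assemble correctly into the product, which is precisely the step where the spatial ordering of the solitons enters.
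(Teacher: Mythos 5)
Your proof is correct and follows essentially the same route as the paper, whose one-line justification via the inequality $|1 - \sign(y)\,\th(y)| \leq 2 \exp(-2|y|)$ is exactly your pair of bounds $\tfrac{1}{2}(1 \mp \th(y)) \leq \exp(-2 y^{\pm})$; you merely make explicit the discard-one-tanh-at-a-time step and the case analysis on the position of $x$ relative to the transition points, which the paper leaves to the reader. The only point worth noting is that what you actually establish for $\Psi_k - \Psi_{k+1}$ with $2 \leq k \leq N-1$ (second knee at $\frac{a_k + a_{k+1}}{2}$ rather than $\frac{a_{k-1}+a_k}{2}$) and for $\Psi_N - \Psi_{N+1}$ (exponent $(\cdot)^-$ rather than $(\cdot)^+$) are corrected versions of two estimates that are misprinted in the statement of the lemma: your versions are the true ones, consistent with the other bounds, and the ones needed in the proof of Proposition \ref{prop:inutile}.
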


\begin{proof}
The estimates in Lemma \ref{lem:estimpsik} follow from the inequality
$$\big| 1 - \sign(x) \th(x) \big| \leq 2 \exp(- 2 |x|),$$
which holds for any $x \in \R$.
\end{proof}

We are now in position to present the 

\begin{proof}[Proof of Proposition \ref{prop:inutile}]
We begin with \eqref{eq:decE}. We expand the energy $E$ according to the Taylor rule by 
$$E(\eta, v) = E(R_{\gc, \ga} + \eps) = E(R_{\gc, \ga}) + E'(R_{\gc, \ga})(\eps) + \frac{1}{2} E''(R_{\gc, \ga})(\eps, \eps) + \boR(\eta, v),
$$
where
\begin{align}
\label{pourE}
& E(R_{\gc, \ga}) = \int_\R \bigg( \frac{(\partial_x \eta_{\gc, \ga})^2}{8 (1 - \eta_{\gc, \ga})} + \frac{1}{2} (1 - \eta_{\gc, \ga}) v_{\gc, \ga}^2 +\frac{1}{4} \eta_{\gc, \ga}^2 \bigg),\\
\label{pourE'}
& E'(R_{\gc, \ga})(\eps) = \int_\R \bigg( \frac{\partial_x \eta_{\gc, \ga} \partial_x \eps_\eta}{4(1 - \eta_{\gc, \ga})} + \frac{(\partial_x \eta_{\gc, \ga})^2 \eps_\eta}{8(1 - \eta_{\gc, \ga})} + (1 - \eta_{\gc, \ga}) v_{\gc, \ga} \eps_v + \frac{1}{2} \big( \eta_{\gc, \ga} - v_{\gc, \ga}^2 \big) \eps_\eta \bigg),\\
\label{pourE''}
& E''(R_{\gc, \ga})(\eps, \eps) = \int_\R \bigg( \frac{(\partial_x \eps_\eta)^2}{4 (1 - \eta_{\gc, \ga})} + \frac{\partial_x \eta_{\gc, \ga} \eps_\eta \partial_x \eps_\eta}{2 (1 - \eta_{\gc, \ga})^2} + \frac{(\partial_x \eta_{\gc, \ga})^2 \eps_\eta^2}{4 (1 - \eta_{\gc, \ga})^3} - 2 v_{\gc, \ga} \eps_\eta \eps_v + \big( 1 - \eta_{\gc, \ga} \big) \eps_v^2 + \frac{\eps_\eta^2}{2} \bigg),
\end{align}
and
$$\boR(\eta, v) := \frac{1}{8} \int_\R \Big( \frac{(\partial_x \eta_{\gc, \ga})^2 \eps_\eta^3}{(1 - \eta) (1 - \eta_{\gc, \ga})^3} + \frac{2 \partial_x \eta_{\gc, \ga} \partial_x \eps_\eta \eps_\eta^2}{(1 - \eta) (1 - \eta_{\gc, \ga})^2} + \frac{(\partial_x \eps_\eta)^2 \eps_\eta}{(1 - \eta) (1 - \eta_{\gc, \ga})} - 4 \eps_\eta \eps_v^2 \Big).$$
Following Corollary \ref{cor:toutvabien}, we may assume that \eqref{eq:ouf3}, \eqref{est:stamodul2}, \eqref{eq:ouf1} and \eqref{eq:ouf2}, as well as estimate \eqref{eq:minsumeta}, hold. From \eqref{est:etavc} and \eqref{eq:ouf2}, we obtain
\begin{equation}
\label{federer}
|\eta_{c_k, a_k}(x)| + |\partial_x \eta_{c_k, a_k}(x)| + |v_{c_k, a_k}(x)| = \boO \Big( \exp \Big( - \frac{\nu_{\gc^*} |x - a_k|}{2} \Big) \Big), 
\end{equation}
for any $x \in \R$ and any $1 \leq k \leq N$. From \eqref{eq:ouf3}, \eqref{eq:minsumeta}, \eqref{federer}, and the expression \eqref{pourE} for $E(R_{\gc, \ga})$, we compute using Lemma \ref{lem:identite} with $p = 1$,
\begin{equation}
\label{eq:decompE}
E(R_{\gc, \ga}) = \sum_{k = 1}^N E(Q_{c_k}) + \boO \Big( L \exp \Big( - \frac{\nu_{\gc^*} L}{2} \Big) \Big).
\end{equation}
Similarly, expression \eqref{pourE'} for $E'$ may be simplified using the H\"older inequality and Lemma \ref{lem:identite} with $p = 2$ as
$$E'(R_{\gc, \ga})(\eps) = \sum_{k=1}^N E'(Q_{c_k,a_k})(\eps) + \boO \Big( L^\frac{1}{2} \exp \Big( - \frac{\nu_{\gc^* } L}{2} \Big) \| \eps \|_X \Big).$$
Since $\| \eps \|_X \leq K_1 \alpha_1$ by \eqref{est:stamodul1} and since
$$E'(Q_{c_k, a_k})(\eps) = E'(Q_{c_k, a_k})(\eps) + c_k P'(Q_{c_k, a_k})(\eps) = 0,$$
by \eqref{eq:crit} and \eqref{cond:staorth}, we deduce that
\begin{equation}
\label{eq:decompE'}
E'(R_{\gc, \ga})(\eps) = \boO \Big( L^\frac{1}{2} \exp \Big( - \frac{\nu_{\gc^* } L}{2} \Big) \Big). 
\end{equation}
From \eqref{eq:ouf3}, \eqref{eq:minsumeta} and \eqref{federer}, as well as the Sobolev embedding theorem, we also infer that 
\begin{equation}
\label{eq:decompR}
\boR(\eta, v) = \boO \big( \| \eps \|_X^3 \big).
\end{equation}
We finally turn to the term $E''(R_{\gc, \ga})(\eps, \eps)$ which requires somewhat more care. We rewrite expression \eqref{pourE''} as
$$E''(R_{\gc, \ga})(\eps, \eps) := \int_\R J(R_{\gc, \ga}, \eps),$$
and we decompose according to the partition of unity \eqref{eq:partition} as
$$\int_\R J(R_{\gc, \ga}, \eps) = \sum_{k = 1}^N \int_\R J(R_{\gc, \ga}, \eps) \Phi_k + \sum_{k = 0}^N \int_\R J(R_{\gc, \ga}, \eps) \Phi_{k, k + 1}.$$
Invoking once more the decay estimates \eqref{federer}, we obtain from Lemma \ref{lem:identite} with $p = + \infty$, and Lemma \ref{lem:estimpsik},
\begin{equation}
\label{pioline}
\int_\R J(R_{\gc, \ga}, \eps) \Phi_k = \int_\R J(Q_{c_k, a_k}, \eps) \Phi_k + \boO \Big( \exp \Big( - \frac{\tau L_1}{2} \Big) \| \eps \|_X^2 \Big).
\end{equation}
Taking into account the formula
$$\partial_x \big( \Phi_k^\frac{1}{2} \eps_\eta \big) = \Phi_k^\frac{1}{2} \partial_x \eps_\eta + \frac{\partial_x \Phi_k}{2 \Phi_k^\frac{1}{2}} \eps_\eta,$$
we obtain, after a change of variable corresponding to a translation by $a_k$,
\begin{equation}
\label{benneteau}
\begin{split}
\int_\R J(Q_{c_k, a_k}, & \eps) \Phi_k\\
= & \int_\R J(Q_{c_k}, \eps_k) - \int_\R \bigg( \frac{\partial_x \Phi_k \eps_\eta^2}{32 \Phi_k (1 - \eta_{c_k, a_k})} + \frac{\eps_\eta \partial_x \eps_\eta}{8(1 - \eta_{c_k, a_k})} + \frac{\partial_x \eta_{c_k, a_k} \eps_\eta^2}{8(1 - \eta_{c_k, a_k})^2} \bigg) \partial_x \Phi_k.
\end{split}
\end{equation}
Since $|\partial_x \Phi_k| = \boO(\tau)$ and $|\partial_x \Phi_k|/|\Phi_k| = \boO(\tau)$, we deduce from \eqref{benneteau},
\begin{equation}
\label{eq:montre1}
\int_\R J(Q_{c_k, a_k}, \eps) \Phi_k = E''(Q_{c_k})(\eps_k, \eps_k) + \boO \big( \tau \| \eps \|_X^2 \big). 
\end{equation}
In a similar manner, we write using Lemma \ref{lem:identite} with $p = + \infty$, and Lemma \ref{lem:estimpsik},
$$\int_\R J(R_{\gc, \ga}, \eps) \Phi_{k, k + 1} = \int_\R J(0, \eps) \Phi_{k, k + 1} + \boO \Big( \exp \Big( - \frac{\tau L_1}{2} \Big) \| \eps \|_X^2 \Big),$$
and then
$$\int_\R J(0, \eps) \Phi_{k, k + 1} = \int_\R J(0, \eps_{k, k + 1}) + \boO \big( \tau \| \eps \|_X^2 \big) = E''(0)(\eps_{k, k + 1}, \eps_{k, k + 1}) + \boO \big( \tau \| \eps \|_X^2 \big).$$
Estimate \eqref{eq:decE} follows combining with \eqref{eq:decompE}, \eqref{eq:decompE'}, \eqref{eq:decompR}, \eqref{pioline} and \eqref{eq:montre1}. 

We turn now to \eqref{eq:decp}. We expand the functional $P_k$, which is exactly quadratic, as 
$$P_k(\eta, v) = P_k(R_{\gc, \ga} + \eps) = P_k(R_{\gc, \ga}) + P_k'(R_{\gc, \ga})(\eps) + \frac{1}{2} P_k''(R_{\gc, \ga})(\eps, \eps),$$
where
\begin{align}
\label{pourP}
& P_k(R_{\gc, \ga}) = \frac{1}{2} \int_\R \eta_{\gc, \ga} v_{\gc, \ga} \big( \Psi_k - \Psi_{k + 1} \big),\\
\label{pourP'}
& P_k'(R_{\gc, \ga})(\eps) = \frac{1}{2} \int_\R \big(\eta_{\gc, \ga} \eps_v + v_{\gc,\ga} \eps_\eta \big) \big( \Psi_k - \Psi_{k + 1} \big),\\
\label{pourP''}
& P_k''(R_{\gc, \ga})(\eps, \eps) = \int_\R \eps_\eta \eps_v \big( \Psi_k - \Psi_{k + 1} \big).
\end{align}
Invoking here again \eqref{federer}, Lemma \ref{lem:identite} with $p = 1$, and Lemma \ref{lem:estimpsik}, we estimate \eqref{pourP} as
$$P_k(R_{\gc, \ga}) = \sum_{j = 1}^N P_k(Q_{c_j, a_j}) + \boO \Big( L \exp \Big( - \frac{\nu_{\gc^*} L}{2} \Big) \Big).$$
For $j \neq k$, we obtain from Lemma \ref{lem:identite} with $p = 1$, and Lemma \ref{lem:estimpsik},
$$P_k(Q_{c_j, a_j}) = \boO \Big( L \exp \Big( - \frac{\nu_{\gc^*} L}{16} \Big) \Big),$$
while
$$P_k(Q_{c_k, a_k}) = P(Q_{c_k}) + \boO \Big( L \exp \Big( - \frac{\nu_{\gc^*} L}{16} \Big) \Big).$$
Concerning \eqref{pourP'}, we obtain similarly from Lemma \ref{lem:identite} with $p = 2$, and Lemma \ref{lem:estimpsik},
$$P_k'(R_{\gc, \ga})(\eps) = P'(Q_{c_k, a_k})(\eps) + \boO \Big( L^\frac{1}{2} \exp \Big( - \frac{\nu_{\gc^*} L}{16} \Big) \Big) = \boO \Big( L^\frac{1}{2} \exp \Big( - \frac{\nu_{\gc^*} L}{16} \Big) \Big),$$
where we have used \eqref{cond:staorth} for the last equality.

Finally, we decompose the integral in \eqref{pourP''} as
$$\int_\R \eps_\eta \eps_v \big( \Psi_k - \Psi_{k + 1} \big) = \sum_{j = 1}^N \int_\R \eps_\eta \eps_v \big( \Psi_k - \Psi_{k + 1} \big) \Phi_j + \sum_{j = 0}^N \int_\R \eps_\eta \eps_v \big( \Psi_k - \Psi_{k + 1} \big) \Phi_{j, j + 1}.$$
When $j \neq k$, we obtain
$$\int_\R \eps_\eta \eps_v \big( \Psi_k - \Psi_{k + 1} \big) \Phi_j = \boO \Big( \exp \Big( - \frac{\tau L_1}{2} \Big) \| \eps \|_X^2 \Big),$$
while for $j = k$,
\begin{align*}
\int_\R \eps_\eta \eps_v \big( \Psi_k - \Psi_{k + 1} \big) \Phi_k & = \int_\R \big( \eps_\eta \Phi_k^\frac{1}{2} \big) \big(\eps_v \Phi_k^\frac{1}{2} \big) + \int_\R \eps_\eta \eps_v \Phi_k \big( \Psi_k - \Psi_{k + 1} - 1 \big)\\
& = P''(Q_{c_k})(\eps_k, \eps_k) + \boO \Big( \exp \Big( - \frac{\tau L_1}{2} \Big) \| \eps \|_X^2 \Big).
\end{align*}
Also, when $j \neq k$ and $j \neq k - 1$, we obtain
$$\int_\R \eps_\eta \eps_v \big( \Psi_k - \Psi_{k + 1} \big) \Phi_{j, j + 1} = \boO \Big( \exp \Big( - \frac{\tau L_1}{2} \Big) \| \eps \|_X^2 \Big).$$
The remaining two terms (corresponding to $j = k$ and $j = k - 1$) are simply grouped together as
$$\int_\R \eps_\eta \eps_v \big( \Psi_k - \Psi_{k + 1} \big) \big( \Phi_{k - 1, k} + \Phi_{k, k + 1} \big) = P_k''(0)(\eps_{k - 1, k}, \eps_{k - 1, k}) + P_k''(0)(\eps_{k, k + 1}, \eps_{k, k + 1}).$$ 
The proof of estimate \eqref{eq:decp}, and therefore of Proposition \ref{prop:inutile}, is completed. 
\end{proof}

As a consequence of Proposition \ref{prop:inutile}, we derive

\begin{proof}[Proof of Corollary \ref{cor:pourG}]
We write
$$E(Q_{c_k}) = E(Q_{c_k^*}) + E'(Q_{c_k^*})( Q_{c_k} - Q_{c_k^*}) + \boO \big( |c_k - c_k^*|^2 \big),$$
and
$$P(Q_{c_k}) = P(Q_{c_k^*}) + P'(Q_{c_k^*})(Q_{c_k} - Q_{c_k^*}) + \boO \big( |c_k - c_k^*|^2 \big).$$
In view of Proposition \ref{prop:inutile}, the conclusion follows from the identity
$$E'(Q_{c_k^*}) + c_k^* P'(Q_{c_k^*}) = 0,$$
and from the inequalities (in order to recover the operators $H_{c_k}$ and $H_0^k$)
$$\big| (c_k^* - c_k) P''(Q_{c_k})(\eps_k, \eps_k) \big| \leq |c_k^* - c_k| \int_\R |\eps_\eta| |\eps_v| \leq \boO \big( |c_k - c_k^*|^2 \big) + \boO \big( \| \eps \|_X^4 \big),$$
and
$$\big| (c_k^* - c_k) P_k''(0)(\eps_{j, j + 1}, \eps_{j, j + 1}) \big| \leq |c_k^* - c_k| \int_\R |\eps_\eta| |\eps_v| \leq \boO \big( |c_k - c_k^*|^2 \big) + \boO \big( \| \eps \|_X^4 \big).$$
\end{proof}

We next prove Lemma \ref{lem:coerapp}.

\begin{proof}[Proof of Lemma \ref{lem:coerapp}]
Let $1 \leq k \leq N$. Under the assumptions of Corollary \ref{cor:pourG}, the pair $\eps$ satisfies the orthogonality conditions \eqref{cond:staorth}. In particular, we can write
$$\langle \eps_k, \partial_x Q_{c_k} \rangle_{L^2} = \int_\R \langle \eps, \partial_x Q_{c_k, a_k} \rangle_{\R^2} \Phi_k = \int_\R \langle \eps, \partial_x Q_{c_k, a_k} \rangle_{\R^2} \big( \Phi_k - 1 \big),$$
and
$$P'(Q_{c_k})(\eps_k) = \frac{1}{2} \int_\R \langle \eps, (v_{c_k, a_k}, \eta_{c_k, a_k}) \rangle_{\R^2} \Phi_k = \frac{1}{2} \int_\R \langle \eps, (v_{c_k, a_k}, \eta_{c_k, a_k}) \rangle_{\R^2} \big( \Phi_k - 1 \big).$$
Arguing as in the proof of Proposition \ref{prop:inutile}, we infer that
$$\int_\R |\eps| \big( |\partial_x Q_{c_k, a_k}| + |(v_{c_k, a_k}, \eta_{c_k, a_k})| \big) \big| \Phi_k - 1 \big| = \boO \Big( L_1^\frac{1}{2} \exp \Big( - \frac{\tau L_1}{2} \Big) \| \eps \|_{L^2} \Big),$$
so that
$$\langle \eps_k, \partial_x Q_{c_k} \rangle_{L^2} = \boO \Big( L_1^\frac{1}{2} \exp \Big( - \frac{\tau L_1}{2} \Big) \| \eps \|_{L^2} \Big),$$
and
\begin{equation}
\label{mcenroe}
P'(Q_{c_k})(\eps_k) = \boO \Big( L_1^\frac{1}{2} \exp \Big( - \frac{\tau L_1}{2} \Big) \| \eps \|_{L^2} \Big).
\end{equation}
We next write the following orthogonal decomposition of $\eps_k$,
\begin{equation}
\label{borg}
\eps_k := \frac{\langle \eps_k, \partial_x Q_{c_k} \rangle_{L^2}}{\| \partial_x Q_{c_k} \|_{L^2}} u_k + \frac{\langle \eps_k, (v_{c_k}, \eta_{c_k}) \rangle_{L^2}}{\| (v_{c_k}, \eta_{c_k}) \|_{L^2}} v_k + r_k,
\end{equation}
in which $u_k := \partial_x Q_{c_k}/\| \partial_x Q_{c_k} \|_{L^2}$, $v_k := (v_{c_k}, \eta_{c_k})/\| (v_{c_k}, \eta_{c_k}) \|_{L^2}$, and $r_k$ satisfies the orthogonality conditions \eqref{cond:orthc}. In view of Proposition \ref{prop:linstab}, we have
$$H_{c_k}(r_k) \geq \Lambda_{c_k} \| r_k \|_X^2.$$
On the other hand, since $\partial_x Q_{c_k}$ and hence $u_k$ belongs to the kernel of the quadratic form $H_{c_k}$, we can write 
$$H_{c_k}(\eps_k) = H_{c_k}(r_k) + 2 \frac{P'(Q_{c_k})(\eps_k)}{\| (v_{c_k}, \eta_{c_k}) \|_{L^2}} \langle \boH_{c_k}(v_k), \eps_k \rangle_{L^2} + \frac{\big( P'(Q_{c_k})(\eps_k) \big)^2}{\| (v_{c_k}, \eta_{c_k}) \|_{L^2}^2} H_{c_k}(v_k).$$
It follows from \eqref{mcenroe} that
\begin{equation}
\label{connors}
H_{c_k}(\eps_k) \geq \Lambda_{c_k} \| r_k \|_X^2 + \boO \Big( L_1^\frac{1}{2} \exp \Big( - \frac{\tau L_1}{2} \Big) \| \eps \|_{L^2}^2 \Big).
\end{equation}
Next, we similarly deduce from \eqref{borg} that
$$\big| \| r_k \|_X - \| \eps_k \|_X \big| \leq \bigg| \frac{\langle \eps_k, \partial_x Q_{c_k} \rangle_{L^2}}{\| \partial_x Q_{c_k} \|_{L^2}} \bigg| \| u_k \|_X + \bigg| \frac{P'(Q_{c_k})(\eps_k)}{\| (v_{c_k}, \eta_{c_k}) \|_{L^2}} \bigg| \| v_k \|_X = \boO \Big( L_1^\frac{1}{2} \exp \Big( - \frac{\tau L_1}{2} \Big) \|\eps\|_{L^2} \Big),$$
and therefore from \eqref{connors},
\begin{equation}
\label{edberg}
H_{c_k}(\eps_k) \geq \frac{\Lambda_{c_k}}{2} \| \eps_k \|_X^2 + \boO \Big( L_1^\frac{1}{2} \exp \Big( - \frac{\tau L_1}{2} \Big) \|\eps\|_{X}^2 \Big).
\end{equation}

We now turn to the terms $H_0^k(\eps_{k, k + 1})$. For $1 \leq k \leq N - 1$, we write
\begin{align*}
\big[ c_k P_k''(0) + c_{k + 1} P_{k + 1}''(0) \big] & (\eps_{k, k + 1}, \eps_{k, k + 1})\\
& = \int_\R c_k \big( \Psi_k - \Psi_{k + 1} \big) + c_{k + 1} \big( \Psi_{k + 1} - \Psi_{k + 2} \big) \big( \eps_{k, k + 1} \big)_\eta \big( \eps_{k, k + 1} \big)_v.
\end{align*}
Since
$$\big| c_k (\Psi_k - \Psi_{k + 1}) + c_{k + 1} (\Psi_{k + 1} - \Psi_{k + 2}) \big| \leq \max \big\{ |c_k|, |c_{k + 1}| \big\},$$
we obtain
$$H_0^k(\eps_{k, k + 1}) \geq \big[ E''(0) - \max \big\{ |c_k|, |c_{k + 1}| \big\} P''(0) \big] (\eps_{k, k + 1}, \eps_{k, k + 1}).$$
On the other hand, the quadratic form $E''(0) - \sqrt{2} P''(0)$ is non-negative, so that
\begin{equation}
\label{eq:sucre}
H_0^k(\eps_{k, k + 1}) \geq \Big( 1 - \frac{\max\{|c_k|, |c_{k + 1}|\}}{\sqrt{2}} \Big) E''(0)(\eps_{k, k + 1}, \eps_{k, k + 1}) \geq \frac{1}{4} \Big( 1 - \frac{\max\{|c_k|, |c_{k + 1}|\}}{\sqrt{2}} \Big) \| \eps_{k, k + 1} \|_X^2.
\end{equation}
Similarly,
\begin{equation}
\label{eq:sucrebis}
H_0^0(\eps_{0, 1}) \geq \Big( 1 - \frac{|c_1|}{\sqrt{2}} \Big) E''(0)(\eps_{0, 1}, \eps_{0, 1}) \geq \frac{1}{4} \Big( 1 - \frac{|c_{1}|}{\sqrt{2}} \Big) \| \eps_{0, 1} \|_X^2,
\end{equation}
and
\begin{equation}
\label{eq:sucreter}
H_0^N(\eps_{N, N + 1}) \geq \Big( 1 - \frac{|c_N|}{\sqrt{2}} \Big) E''(0)(\eps_{N, N + 1},\eps_{N, N + 1}) \geq \frac{1}{4} \Big( 1 - \frac{|c_N|}{\sqrt{2}} \Big) \| \eps_{N, N + 1} \|_X^2.
\end{equation}
In view of \eqref{edberg}, \eqref{eq:sucre}, \eqref{eq:sucrebis} and \eqref{eq:sucreter}, we set
$$\Lambda^* = \min \Big\{ \frac{\Lambda_{c_k}}{2}, \frac{1}{4} \Big( 1 - \frac{|c_k|}{\sqrt{2}} \Big), \ 1 \leq k \leq N \Big\},$$
so that \eqref{eq:miniepsk} and \eqref{eq:miniepskbis} follow. Notice that Corollary \ref{cor:toutvabien} and the last statement of Proposition \ref{prop:linstab} ensure that $\Lambda^*$ only depends on $\gc^*$.

In order to complete the proof of Lemma \ref{lem:coerapp}, we now derive \eqref{eq:equivnorm}. In view of definitions \eqref{def:epskbis} and \eqref{def:epsk}, we may write
$$\sum_{k = 1}^N \| \eps_k \|_X^2 = \sum_{k = 1}^N \int_\R \Big( \big( \partial_x (\eps_\eta \Phi_k^\frac{1}{2}) \big)^2 + \Phi_k \big( \eps_\eta^2 + \eps_v^2 \big) \Big),$$
and
$$\sum_{k = 0}^N \| \eps_{k, k + 1} \|_X^2 = \sum_{k = 0}^N \int_\R \Big( \big( \partial_x (\eps_\eta \Phi_{k, k + 1}^\frac{1}{2}) \big)^2 + \Phi_{k, k + 1} \big( \eps_\eta^2 + \eps_v^2 \big) \Big).$$
Expanding of the derivatives $\partial_x (\eps_\eta \Phi_k^\frac{1}{2})$ and $\partial_x (\eps_\eta \Phi_{k, k + 1}^\frac{1}{2})$ and summing, we are led to
\begin{align*}
\sum_{k = 1}^N \| \eps_k \|_X^2 & + \sum_{k = 0}^N \| \eps_{k, k + 1} \|_X^2 = \int_\R \Big( \sum_{k = 1}^N \Phi_k + \sum_{k = 0}^N \Phi_{k, k + 1} \Big) \big( (\partial_x \eps_\eta)^2 + \eps_\eta^2 + \eps_v^2 \big)\\
& + \int_\R \partial_x \Big( \sum_{k = 1}^N \Phi_k + \sum_{k = 0}^N \Phi_{k, k + 1} \Big) \eps_\eta \partial_x \eps_\eta + \sum_{k = 1}^N \int_\R \frac{(\partial_x \Phi_k)^2}{4 \Phi_k} \eps_\eta^2 + \sum_{k = 0}^N \int_\R \frac{(\partial_x \Phi_{k, k + 1})^2}{4 \Phi_{k, k + 1}} \eps_\eta^2.
\end{align*}
Since
$$\sum_{k = 1}^N \Phi_k + \sum_{k = 0}^N \Phi_{k, k + 1} = 1,$$
and $\Phi_k > 0$ on $\R$ (so that the last two sums in the previous identity are non-negative), we finally deduce that
$$\sum_{k = 1}^N \| \eps_k \|_X^2 + \sum_{k = 0}^N \| \eps_{k, k + 1} \|_X^2 \geq \int_\R \big( (\partial_x \eps_\eta)^2 + \eps_\eta^2 + \eps_v^2 \big) = \| \eps \|_X^2.$$
This completes the proof of Lemma \ref{lem:coerapp}.
\end{proof}

\begin{proof}[Proof of Corollary \ref{cor:estG}]
Inequality \eqref{eq:lowerG} follows from Corollary \ref{cor:pourG}, Lemma \ref{lem:coerapp} and inequality \eqref{eq:capasse}, while inequality \eqref{eq:upperG} follows from Corollary \ref{cor:pourG} and the fact that $H_{c_k}$ and $H_0$ are continuous quadratic forms, with continuous bounds depending only on $\gc^*$, in view of Corollary \ref{cor:toutvabien}.
\end{proof}

\section{Dynamical properties}
\label{sec:evol}

In this section, we focus on the evolution in time under \eqref{HGP} of various quantities introduced so far. In particular, we present the proofs of Propositions \ref{prop:modul} and \ref{prop:mono}. 

\subsection{Dynamics of the modulation parameters}

\begin{proof}[Proof of Proposition \ref{prop:modul}] 
We first derive the conclusions of Proposition \ref{prop:modul} when the pair $(\eta, v)$ is a smooth solution to \eqref{HGP}, for which we are allowed to compute the differential equations satisfied by the modulation parameters $a_k(t)$ and $c_k(t)$. We next complete the proof using a density argument.

\setcounter{step}{0}
\begin{step}
\label{S1}
Regularity and estimates for smooth solutions. 
\end{step}

In this step, we make the further assumption that $(\eta, v) \in \boC^0([0,T],X^3(\R))$, from which it follows by equation \eqref{HGP} that $(\eta, v) \in \boC^1([0,T],X(\R))$, and then from Proposition \ref{prop:stadec} that $\gc \in \boC^1([0,T], \Adm_N)$ and $\ga \in \boC^1([0, T], \R^N)$. We derive from \eqref{HGP} that $\eps := (\eps_\eta, \eps_v)$ is solution to the equations
\begin{equation}
\label{eq:epseta}
\begin{split}
\partial_t \eps_\eta = & 2 \partial_x \big( \eps_v - \eps_\eta v_{\gc, \ga} - \eps_v \eta_{\gc, \ga} - \eps_\eta \eps_v \big) - \sum_{k = 1}^N \sum_{j \neq k} \partial_x \big( \eta_{c_k, a_k} v_{c_j, a_j} \big)\\
& - \sum_{k = 1}^N c_k' \partial_c \eta_{c_k, a_k} + \sum_{k = 1}^N \big( a_k' - c_k \big) \partial_x \eta_{c_k, a_k},
\end{split} 
\end{equation}
and
\begin{equation}
\label{eq:epsv}
\begin{split}
\partial_t \eps_v = & \partial_x \big( \eps_\eta - 2 v_{\gc, \ga} \eps_v - (\eps_v)^2 \big) - \sum_{k = 1}^N \sum_{j \neq k} \partial_x \big( v_{c_k, a_k} v_{c_j, a_j} \big) + \sum_{k = 1}^N \partial_x^2 \Big( \frac{\partial_x \eta_{c_k, a_k}}{2 (1 - \eta_{c_k, a_k})} \Big)\\
& - \partial_x^2 \Big( \frac{\partial_x \eta_{\gc, \ga} + \partial_x \eps_\eta}{2 (1 - \eta_{\gc, \ga} - \eps_\eta)} \Big) + \sum_{k = 1}^N \partial_x \Big( \frac{3 (\partial_x \eta_{c_k, a_k})^2}{4 (1 - \eta_{c_k, a_k})^2} \Big) - \partial_x \Big( \frac{3 (\partial_x \eta_{\gc, \ga} + \partial_x \eps_\eta)^2}{4 (1 - \eta_{\gc, \ga} - \eps_\eta)^2} \Big)\\
& - \sum_{k = 1}^N c_k' \partial_c v_{c_k, a_k} + \sum_{k = 1}^N \big( a_k' - c_k \big) \partial_x v_{c_k, a_k}.
\end{split}
\end{equation}
Here and in the sequel, we have dropped the explicit mention to the dependence in $t$ when this does not lead to a confusion. 

We next differentiate the orthogonality conditions \eqref{cond:orthc} to obtain
$$\frac{d}{dt} \langle \eps(\cdot, t), \partial_x Q_{c_k(t), a_k(t)} \rangle_{L^2} = \frac{d}{dt} P'(Q_{c_k(t), a_k(t)})(\eps(\cdot, t)) = 0,$$
for any $1 \leq k \leq N$. Expanding the previous identities according to the chain rule and combining the resulting equations with \eqref{eq:epseta} and \eqref{eq:epsv} yields the system 
\begin{equation}
\label{lloris}
M(t) \begin{pmatrix} \gc'(t) \\ \ga'(t) - \gc(t) \end{pmatrix} = \Phi(t),
\end{equation}
where the coefficients $(M_{k, j}(t))_{1 \leq j, k \leq 2 N}$ of the $2 N$-dimensional matrix $M(t)$ are given by
\begin{align*}
& M_{k, j}(t) := P'(Q_{c_k, a_k})(\partial_c Q_{c_j, a_j}) - \delta_{j, k} P'(\partial_c Q_{c_k, a_k})(\eps),\\
& M_{k, j + N}(t) := - P'(Q_{c_k, a_k})(\partial_x Q_{c_j, a_j}) + \delta_{j, k} P'(\partial_x Q_{c_k, a_k})(\eps),\\
& M_{k + N, j}(t) := \langle \partial_c Q_{c_j, a_j}, \partial_x Q_{c_k, a_k} \rangle_{L^2} - \delta_{j, k} \langle \eps, \partial_c \partial_x Q_{c_k, a_k} \rangle_{L^2},\\
& M_{k + N, j + N}(t) := - \langle \partial_x Q_{c_j, a_j}, \partial_x Q_{c_k, a_k} \rangle_{L^2} + \delta_{j, k} \langle \eps, \partial_x^2 Q_{c_k, a_k} \rangle_{L^2},
\end{align*}
and the components $(\Phi_k(t))_{1 \leq k \leq 2 N}$ of the $2 N$-vector $\Phi(t)$ are given by
\begin{equation}
\label{mexes1}
\begin{split}
\Phi_k(t) := & - 2 \langle \eps_v - \eps_\eta v_{\gc, \ga} - \eps_v \eta_{\gc, \ga} - \eps_\eta \eps_v, \partial_x v_{c_k, a_k} \rangle_{L^2} - \langle \eps_\eta - 2 v_{\gc, \ga} \eps_v - (\eps_v)^2, \partial_x \eta_{c_k, a_k} \rangle_{L^2}\\
& - c_k P'(\partial_x Q_{c_k, a_k})(\eps) - \Big\langle \frac{\partial_x \eta_{\gc, \ga} + \partial_x \eps_\eta}{2 (1 - \eta_{\gc, \ga} - \eps_\eta)} - \sum_{j = 1}^N \frac{\partial_x \eta_{c_j, a_j}}{2 (1 - \eta_{c_j, a_j})}, \partial_x^2 \eta_{c_k, a_k} \Big\rangle_{L^2}\\
& + \Big\langle \frac{3 (\partial_x \eta_{\gc, \ga} + \partial_x \eps_\eta)^2}{4 (1 - \eta_{\gc, \ga} - \eps_\eta)^2} - \sum_{j = 1}^N \frac{3 (\partial_x \eta_{c_j, a_j})^2}{4 (1 - \eta_{c_j, a_j})^2}, \partial_x \eta_{c_k, a_k} \Big\rangle_{L^2}\\
& + \sum_{j = 1}^N \sum_{\underset{l \neq j}{l = 1}}^N \Big( \langle \eta_{c_j, a_j} v_{c_l, a_l}, \partial_x v_{c_k, a_k} \rangle_{L^2} + \langle v_{c_j, a_j} v_{c_l, a_l}, \partial_x \eta_{c_k, a_k} \rangle_{L^2} \Big),
\end{split}
\end{equation}
\begin{equation}
\label{mexes2}
\begin{split}
\Phi_{k + N}(t) := & 2 \langle \eps_\eta v_{\gc, \ga} + \eps_v \eta_{\gc, \ga} + \eps_\eta \eps_v - \eps_v, \partial_x^2 \eta_{c_k, a_k} \rangle_{L^2} - \langle \eps_\eta - 2 v_{\gc, \ga} \eps_v - (\eps_v)^2, \partial_x^2 v_{c_k, a_k} \rangle_{L^2}\\
& - c_k \langle \eps, \partial_x^2 Q_{c_k, a_k} \rangle_{L^2} - \Big\langle \frac{\partial_x \eta_{\gc, \ga} + \partial_x \eps_\eta}{2 (1 - \eta_{\gc, \ga} - \eps_\eta)} - \sum_{j = 1}^N \frac{\partial_x \eta_{c_j, a_j}}{2 (1 - \eta_{c_j, a_j})}, \partial_x^3 v_{c_k, a_k} \Big\rangle_{L^2}\\
& + \Big\langle \frac{3 (\partial_x \eta_{\gc, \ga} + \partial_x \eps_\eta)^2}{4 (1 - \eta_{\gc, \ga} - \eps_\eta)^2} - \sum_{j = 1}^N \frac{3 (\partial_x \eta_{c_j, a_j})^2}{4 (1 - \eta_{c_j, a_j})^2}, \partial_x^2 v_{c_k, a_k} \Big\rangle_{L^2}\\
& + \sum_{j = 1}^N \sum_{\underset{l \neq j}{l = 1}}^N \Big( \langle \eta_{c_j, a_j} v_{c_l, a_l}, \partial_x^2 \eta_{c_k, a_k} \rangle_{L^2} + \langle v_{c_j, a_j} v_{c_l, a_l}, \partial_x^2 v_{c_k, a_k} \rangle_{L^2} \Big).
\end{split}
\end{equation}
We next write the matrix $M(t)$ under the form $M(t) := D(t) + H(t)$, where $D(t)$ refers to the diagonal matrix with diagonal coefficients
\begin{align*}
& D_{k, k}(t) := P'(Q_{c_k})(\partial_c Q_{c_k}) = (2 - c_k^2)^\frac{1}{2} \neq 0,\\
& D_{k + N, k + N}(t) := \| \partial_x Q_{c_k} \|_{L^2}^2 = \frac{(2 - c_k^2)^\frac{3}{2}}{3} \neq 0,
\end{align*}
where $1 \leq k \leq N$. In particular, $D(t)$ is invertible, and its inverse is uniformly bounded with respect to $t$ in view of \eqref{eq:ouf1}. In order to estimate the matrix $H(t)$, we first invoke the identities
$$P'(Q_{c_k, a_k})(\partial_x Q_{c_k, a_k}) = \langle \partial_c Q_{c_k, a_k}, \partial_x Q_{c_k, a_k} \rangle_{L^2} = 0.$$
Since the remaining terms involve either $\eps$ or distinct solitons, we can rely on computations similar to the ones in the proof of Proposition \ref{prop:inutile} to get
$$\big| H_{k, j}(t) \big| = \boO \big( \| \eps \|_{L^2} \big) + \boO \Big( L \exp \Big( - \frac{\nu_{\gc^*} L}{2} \Big) \Big).$$
Hence, there exists $\alpha_2 \leq \alpha_1$ and $L_2 \geq L_1$ such that for $\alpha \leq \alpha_2$ and $L \geq L_2$, the operator norm of the matrix $D(t)^{- 1} H(t)$ is less than $1/2$, so that $M(t)$ is invertible. Moreover, the operator norms of the matrices $M(t)^{- 1}$ are uniformly bounded with respect to $t$.

At this stage, we have proved that
$$\sum_{k = 1}^N \big( |a_k'(t) - c_k(t)| + |c_k'(t)| \big) = \boO \big( \| \Phi(t) \|_{\R^{2 N}} \big),$$
for any $t \in [0, T]$. Inspection of $\Phi(t)$ shows that, similar to $H_{k, j}$ above, it only contains terms involving either $\eps$ or distinct solitons. Therefore we obtain
$$\sum_{k = 1}^N \big( |a_k'(t) - c_k(t)| + |c_k'(t)| \big) = \boO \big( \| \eps \|_{L^2} \big) + \boO \Big( L \exp \Big( - \frac{\nu_{\gc^*} L}{2} \Big) \Big),$$ 
which corresponds to \eqref{est:modul2}.

\begin{step}
\label{S2}
Density argument. 
\end{step}

We consider a sequence $(\eta_0^n, v_0^n)$ in $\boN\boV^3(\R)$ such that
$$(\eta_0^n, v_0^n) \to (\eta_0, v_0) \quad {\rm in} \quad X(\R),$$
as $n \to + \infty$, and we denote $(\eta^n, v^n)$ the corresponding solutions to \eqref{HGP}. It follows from Theorem \ref{thm:cauchyhgp} that the solutions $(\eta^n, v^n)$ are well-defined on $[0, T]$ for $n$ sufficiently large and that
\begin{equation}
\label{benzema1}
(\eta^n, v^n) \to (\eta, v) \quad {\rm in} \quad \boC^0([0, T], X(\R)),
\end{equation}
as $n \to + \infty$. In particular, for $n$ large enough, the compactness of the segment $[0, T]$ ensures that $(\eta^n(\cdot, t), v^n(\cdot, t))$ belongs to $\boU_{\gc^*}(\alpha, L)$ for any $t \in [0, T]$. As a consequence, one can apply Step \ref{S1} to $(\eta^n, v^n)$. This yields maps $\gc^n$ and $\ga^n$ of class $\boC^1$ on $[0, T]$ such that the orthogonality conditions \eqref{cond:orthc} are satisfied by the pair $\eps^n := (\eps_\eta^n, \eps_v^n) = (\eta^n, v^n) - R_{\gc^n, \ga^n}$. Moreover, combining the Lipschitz continuity of the maps $\gC$ and $\gA$ on some tubular neighborhood of the compact set $\{ (\eta(t), v(t)), \ t \in [0, T] \}$ with \eqref{benzema1}, we have
\begin{equation}
\label{benzema2}
(\gc^n(t), \ga^n(t)) \to (\gc(t), \ga(t)) \quad {\rm in} \quad \boC^0([0, T], \R^{2 N}),
\end{equation}
as $n \to + \infty$. Recall that, by formulae \eqref{form:etavc}, the maps $(c, a) \to \partial_c^\alpha \partial_x^\beta Q_c(\cdot - a)$ are continuous from $\big( (- \sqrt{2}, \sqrt{2}) \setminus \{ 0 \} \big) \times \R$ into $L^2(\R)^2$ for any $(\alpha, \beta) \in \N^2$. Combined with \eqref{benzema2}, this shows that the matrices $M^n(t)$ converge towards the matrices $M(t)$, uniformly with respect to $t$, as $n \to + \infty$. Since the map $A \mapsto A^{- 1}$ is continuous from $\boG \boL_N(\R)$ to $\boM_N(\R)$, the inverses $(M^n(t))^{- 1}$ also converge towards the inverses $M(t)^{- 1}$ uniformly with respect to $t \in [0, T]$.

We next turn to the vectors $\Phi^n(t)$. In view of \eqref{benzema1} and \eqref{benzema2}, we can write
\begin{equation}
\label{mandanda}
\eps^n = (\eta^n, v^n) - R_{\gc^n, \ga^n} \to (\eta, v) - R_{\gc, \ga} = \eps \quad {\rm in} \quad \boC^0([0, T], X(\R)),
\end{equation}
as $n \to + \infty$. On the other hand, in view of formulae \eqref{form:etavc}, the derivatives $\partial_c^\alpha \partial_x^\beta Q_c$ have a vanishing limit at infinity for any $(\alpha, \beta) \in \N^2$. Therefore, the maps $(c, a) \to \partial_c^\alpha \partial_x^\beta Q_c(\cdot - a)$ are not only continuous for the $L^2$-norm, but also for the uniform one. Applying \eqref{benzema2} and \eqref{mandanda} to expressions \eqref{mexes1} and \eqref{mexes2}, and invoking the Sobolev embedding theorem of $H^1(\R)$ into $\boC^0(\R)$ when necessary, it follows that
$$\Phi^n \to \Phi \quad {\rm in} \quad \boC^0([0, T], \R^{2 N}),$$
as $n \to + \infty$. Coming back to \eqref{lloris}, we are finally led to
$$\begin{pmatrix} (\gc^n)'(t) \\ (\ga^n)'(t) - \gc^n(t) \end{pmatrix} \to M(t)^{- 1} \Phi(t),$$
as $n \to + \infty$, the convergence being uniform with respect to $t \in [0, T]$. In view of \eqref{benzema2}, this shows that the maps $\gc$ and $\ga$ are actually of class $\boC^1$ on $[0, T]$. Moreover, they are given by
$$\begin{pmatrix} \gc'(t) \\ \ga'(t) - \gc(t) \end{pmatrix} = M(t)^{- 1} \Phi(t),$$
for any $t \in [0, T]$, so that \eqref{est:modul2} follows as in Step \ref{S1}.
\end{proof}

We are now in position to provide the

\begin{proof}[Proof of Corollary \ref{cor:separated}]
When $(\eta(\cdot, t), v(\cdot, t)) \in \boU_{\gc^*}(\alpha, L)$ for any $t \in [0, T]$, we can combine estimates \eqref{est:stamodul1} and \eqref{est:modul2} to obtain
\begin{equation}
\label{lievremont}
\sum_{k = 1}^N \big| a_k'(t) - c_k^* \big| \leq \sum_{k = 1}^N \big| a_k'(t) - c_k(t) \big| + \sum_{k = 1}^N \big| c_k(t) - c_k^* \big| = \boO \big( \alpha \big) + \boO \Big( \Big( L \exp \Big( - \frac{\nu_{\gc^*} L}{2} \Big) \Big).
\end{equation}
As a consequence, we can fix $\alpha_3$ small enough and $L_3$ sufficiently large, so that
$$a_{k + 1}'(t) - a_k'(t) > \frac{1}{2} \big( c_{k + 1}^* - c_k^* \big) \geq \sigma^*,$$
for any $1 \leq k \leq N - 1$, when $\alpha \leq \alpha_3$ and $L \geq L_3$. The first inequality in \eqref{est:stamodul2bis} follows by writing
$$a_{k + 1}(t) - a_k(t) - a_{k + 1}(0) + a_k(0) = \int_0^t \big( a_{k + 1}'(s) - a_k(s) \big) \, ds > \sigma^* t.$$
Applying \eqref{est:stamodul2} to the pair $(\eta(\cdot, 0), v(\cdot, 0))$, we deduce the second inequality in \eqref{est:stamodul2bis}.

Concerning \eqref{def:Wtierce}, it follows from \eqref{lievremont} that
$$\Big| \sqrt{2 - (a_k'(t))^2} - \sqrt{2 - (c_k^*)^2} \Big| = \boO \big( \alpha \big) + \boO \Big( \Big( L \exp \Big( - \frac{\nu_{\gc^*} L}{2} \Big) \Big).$$
Therefore, one can decrease $\alpha_3$ and increase $L_3$, if necessary, so that inequality \eqref{def:Wtierce} holds for any $1 \leq k \leq N$, when $\alpha \leq \alpha_3$ and $L \geq L_3$. This completes the proof of Corollary \ref{cor:separated}.
\end{proof}

\subsection{Monotonicity of localized scalar momentum}
\label{sub:monotonicity}

This subsection is devoted to the almost monotonicity properties of suitably localized versions of the scalar momentum
$$P(\eta, v) := \frac{1}{2} \int_\R \eta v.$$
We derive them using the conservative form of the equation governing the integrand $\eta v$ of $P$.

\begin{lemma}
\label{lem:p}
Let $(\eta, v) \in \boC^0([0, T],\boN\boV^3(\R))$ be a solution to \eqref{HGP}. Then,
\begin{equation}
\label{conslaw}
\partial_t \big( \eta v \big) = \partial_x \Big( (1 - 2 \eta) v^2 + \frac{\eta^2}{2} + \frac{(3 - 2 \eta) (\partial_x \eta)^2}{4(1 - \eta)^2} \Big) + \frac{1}{2} \partial_x^3 \big( \eta + \ln(1 - \eta) \big) \quad {\rm on} \quad [0, T] \times \R.
\end{equation}
\end{lemma}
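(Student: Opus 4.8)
The statement is a conservation law, so the plan is to verify it by a direct computation starting from the two evolution equations in \eqref{HGP} and applying the product rule to $\partial_t(\eta v)$. The assumption $(\eta, v) \in \boC^0([0, T], \boN\boV^3(\R))$ guarantees, through \eqref{HGP} itself, that the pair is $\boC^1$ in time and sufficiently regular in space, and that $1 - \eta$ stays bounded away from $0$; consequently every manipulation below---in particular the differentiation of the various quotients with denominator $1 - \eta$---is legitimate pointwise on $[0, T] \times \R$.

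First I would rewrite \eqref{HGP} in the compact form $\partial_t \eta = 2 \partial_x\big( (1 - \eta) v \big)$ and $\partial_t v = \partial_x w$, where
$$w := \eta - v^2 - \partial_x \Big( \frac{\partial_x \eta}{2 (1 - \eta)} \Big) + \frac{(\partial_x \eta)^2}{4 (1 - \eta)^2}.$$
By the product rule, $\partial_t(\eta v) = v \, \partial_t \eta + \eta \, \partial_t v = 2 v \, \partial_x\big( (1 - \eta) v \big) + \eta \, \partial_x w$. An elementary expansion gives $2 v \, \partial_x\big( (1 - \eta) v \big) = \partial_x\big( (1 - 2 \eta) v^2 \big) + \eta \, \partial_x(v^2)$, so that the two terms of the form $\eta \, \partial_x(\cdot)$ combine into $\eta \, \partial_x g$ with $g := v^2 + w$. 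The crucial point is that $v^2$ cancels in $g$: after differentiating the quotient one finds
$$g = \eta - \frac{\partial_x^2 \eta}{2 (1 - \eta)} - \frac{(\partial_x \eta)^2}{4 (1 - \eta)^2},$$
which depends on $\eta$ alone. Since the $v$-dependent contribution $\partial_x\big( (1 - 2 \eta) v^2 \big)$ already matches the claimed right-hand side, the proof reduces to the purely $\eta$-dependent identity
$$\eta \, \partial_x g = \partial_x \Big( \frac{\eta^2}{2} + \frac{(3 - 2 \eta) (\partial_x \eta)^2}{4 (1 - \eta)^2} \Big) + \frac{1}{2} \partial_x^3 \big( \eta + \ln(1 - \eta) \big).$$

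It then remains to verify this last identity, which is where the only genuine labour lies. The useful preliminary observation is that $\partial_x\big( \eta + \ln(1 - \eta) \big) = - \eta \, \partial_x \eta / (1 - \eta)$, so the third-order term equals $- \tfrac{1}{2} \partial_x^2\big( \eta \, \partial_x \eta / (1 - \eta) \big)$, which makes the comparison tractable. I would expand $\eta \, \partial_x g$ and both terms on the right, then collect contributions according to the type of monomial in the derivatives of $\eta$, namely $\eta \, \partial_x \eta$, together with $\eta \, \partial_x^3 \eta /(1-\eta)$, $\eta \, \partial_x \eta \, \partial_x^2 \eta /(1-\eta)^2$ and $\eta \, (\partial_x \eta)^3 /(1-\eta)^3$ after all the quotient differentiations are carried out. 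Matching these groups one by one---in particular reconciling the $(1 - \eta)^2$ and $(1 - \eta)^3$ denominators in the $(\partial_x \eta)^3$ group---confirms the identity; everything preceding this bookkeeping is a short algebraic rearrangement. Combining the verified $\eta$-identity with the expression for the $v$-dependent part yields \eqref{conslaw}.
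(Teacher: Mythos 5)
Your proposal is correct and follows essentially the same route as the paper: apply the product rule to $\partial_t(\eta v)$ using the two equations of \eqref{HGP}, absorb the $v$-dependent terms into $\partial_x\big((1-2\eta)v^2\big)$, and reduce the remainder to the purely $\eta$-dependent identity $\eta\,\partial_x\big(\tfrac{\partial_x^2\eta}{2(1-\eta)}+\tfrac{(\partial_x\eta)^2}{4(1-\eta)^2}\big)=-\tfrac12\partial_x^3(\eta+\ln(1-\eta))-\partial_x\big(\tfrac{(3-2\eta)(\partial_x\eta)^2}{4(1-\eta)^2}\big)$, which is exactly the identity the paper invokes (and which your observation $\partial_x(\eta+\ln(1-\eta))=-\eta\,\partial_x\eta/(1-\eta)$ makes straightforward to verify). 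The algebra you outline checks out, so the argument is sound.
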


\begin{proof}
Under such regularity, it follows from \eqref{HGP} that $\partial_t \eta$ and $\partial_t v$ are continuous functions on $[0, T] \times \R$ and that 
$$\partial_t \big( \eta v \big) = \partial_x \Big( (1 - 2 \eta) v^2 + \frac{\eta^2}{2} \Big) - \eta \partial_x \Big( \frac{\partial_x^2 \eta}{2(1 - \eta)} + \frac{(\partial_x \eta)^2}{4(1 - \eta)^2} \Big).$$
The conservation law given by \eqref{conslaw} then follows from the identity
$$\eta \partial_x \Big( \frac{\partial_x^2 \eta}{2(1 - \eta)} + \frac{(\partial_x \eta)^2}{4(1 - \eta)^2} \Big) = - \frac{1}{2} \partial_x^3 \big( \eta + \ln(1 - \eta) \big) - \partial_x \Big( \frac{(3 - 2 \eta) (\partial_x \eta)^2}{4(1 - \eta)^2} \Big).$$
\end{proof}

One can drop the smoothness assumptions in Lemma \ref{lem:p} by deriving an integral version of \eqref{conslaw}.

\begin{cor}
\label{Cor:intp}
Let $(\eta, v) \in \boC^0([0, T], \boN\boV(\R))$ be a solution to \eqref{HGP}, and let $\psi \in \boC^0([0, T], \linebreak[1] \boC_b^3(\R)) \cap \boC^1([0, T],\boC_b^0(\R))$. Then, on $[0,T]$ 
\begin{equation}
\label{consint}
\partial_t \bigg( \int_\R \psi \eta v \bigg) = \int_\R \partial_t \psi \eta v - \int_\R \partial_x \psi \Big( (1 - 2 \eta) v^2 + \frac{\eta^2}{2} + \frac{(3 - 2 \eta) (\partial_x \eta)^2}{4(1 - \eta)^2} \Big) - \frac{1}{2} \int_\R \partial_x^3 \psi \big( \eta + \ln(1 - \eta) \big).
\end{equation}
\end{cor}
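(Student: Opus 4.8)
The plan is to first establish \eqref{consint} for smooth solutions, where Lemma \ref{lem:p} applies, and then to remove the smoothness assumption by a density argument in the spirit of Step \ref{S2} of the proof of Proposition \ref{prop:modul}. In the smooth case $(\eta, v) \in \boC^0([0, T], \boN\boV^3(\R))$, equation \eqref{HGP} makes the solution of class $\boC^1$ in time with values in $X(\R)$, so that $t \mapsto \int_\R \psi \eta v$ is differentiable with $\partial_t \big( \int_\R \psi \eta v \big) = \int_\R \partial_t \psi\, \eta v + \int_\R \psi\, \partial_t(\eta v)$. I would substitute the pointwise conservation law \eqref{conslaw} into the second integral and integrate by parts, moving one derivative off the flux term and three derivatives off $\eta + \ln(1 - \eta)$ onto $\psi$. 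The boundary terms vanish since $\eta$, $v$ and their derivatives decay at infinity while $\psi$ and its derivatives are bounded, and all integrands are genuinely integrable: $\eta v \in L^1$ by Cauchy--Schwarz, the flux $(1 - 2\eta) v^2 + \eta^2/2 + (3 - 2\eta)(\partial_x \eta)^2/(4(1 - \eta)^2) \in L^1$ using the lower bound on $1 - \eta$, and $\eta + \ln(1 - \eta) = \boO(\eta^2) \in L^1$ near the origin. This yields \eqref{consint} in the smooth case.

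Next I would pass to general solutions. Following Step \ref{S2} of Proposition \ref{prop:modul}, I pick a sequence $(\eta_0^n, v_0^n) \in \boN\boV^3(\R)$ converging to $(\eta^0, v^0)$ in $X(\R)$, and denote by $(\eta^n, v^n)$ the associated solutions, which by Theorem \ref{thm:cauchyhgp} are defined on $[0, T]$ for $n$ large and satisfy $(\eta^n, v^n) \to (\eta, v)$ in $\boC^0([0, T], X(\R))$. Integrating the smooth identity in time gives $\int_\R \psi(t) \eta^n(t) v^n(t) - \int_\R \psi(0) \eta^n(0) v^n(0) = \int_0^t \mathrm{RHS}^n(s)\, ds$, where $\mathrm{RHS}^n(s)$ denotes the right-hand side of \eqref{consint} evaluated along $(\eta^n, v^n)$, and I would let $n \to + \infty$ in this integrated form. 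The left-hand side converges uniformly in $t$, since $\eta^n v^n \to \eta v$ in $\boC^0([0, T], L^1(\R))$ by Cauchy--Schwarz from the $X$-convergence and $\psi$ is bounded.

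The main obstacle is the convergence of the nonlinear terms in $\mathrm{RHS}^n$, which involve the denominator $1 - \eta^n$. The key point is a uniform lower bound $1 - \eta^n \geq \delta > 0$ on $[0, T] \times \R$: since $(\eta, v) \in \boC^0([0, T], \boN\boV(\R))$, the quantity $\max_x \eta(x, t)$ is continuous on the compact interval $[0, T]$, hence stays below $1 - 2\delta$ for some $\delta > 0$, and $\eta^n \to \eta$ in $\boC^0([0, T], H^1(\R)) \hookrightarrow \boC^0([0, T], L^\infty(\R))$, so $1 - \eta^n \geq \delta$ for $n$ large. With this bound, $(v^n)^2 \to v^2$ and $(\partial_x \eta^n)^2 \to (\partial_x \eta)^2$ in $\boC^0([0, T], L^1(\R))$, the factors $\eta^n$ and $1/(1 - \eta^n)^2$ converge uniformly, and $\eta^n + \ln(1 - \eta^n) \to \eta + \ln(1 - \eta)$ in $\boC^0([0, T], L^1(\R))$, bounding the difference pointwise by $C |\eta^n - \eta| (|\eta^n| + |\eta|)$ via the mean value theorem, the derivative $-s/(1 - s)$ being $\boO(|s|)$ near the origin.

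Together with the boundedness of $\partial_t \psi$, $\partial_x \psi$ and $\partial_x^3 \psi$, these give convergence of $\mathrm{RHS}^n(s)$ uniformly in $s$, hence of its time integral, and I obtain the time-integrated version of \eqref{consint} for $(\eta, v)$. Since its right-hand integrand is continuous in $s$, by continuity of $s \mapsto (\eta(s), v(s))$ in $X(\R)$ together with the uniform lower bound on $1 - \eta$, the fundamental theorem of calculus returns the differential identity \eqref{consint}, completing the proof.
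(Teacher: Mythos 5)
Your proof is correct and follows essentially the same route as the paper, which simply states that the result ``follows by approximation of the initial datum using Theorem \ref{thm:cauchyhgp}'': you prove the identity for smooth solutions via Lemma \ref{lem:p} and integration by parts, then pass to the limit using the continuity of the flow. The details you supply (the uniform lower bound on $1-\eta^n$, the $L^1$-convergence of the quadratic terms, and the recovery of the differential identity from the integrated one) are exactly the ones the paper leaves implicit.
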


\begin{proof}
It follows by approximation of the initial datum using Theorem \ref{thm:cauchyhgp}.
\end{proof}

In particular, taking $\psi = 1$ in Corollary \ref{Cor:intp} yields the conservation of $P$.

We turn now to localized versions of $P$. We deduce from Corollary \ref{Cor:intp}

\begin{prop}
\label{prop:almomo}
Let $(\eta, v) \in \boC^0([0, T], \boN\boV(\R))$ be a solution to \eqref{HGP}. Let $\Psi \in \boC_b^3(\R)$ be a non-decreasing function such that $|\partial_x^3 \Psi| \leq C_0 \partial_x \Psi$ on $\R$ for some positive number $C_0$. Let $X \in \boC^1([0, T], \R)$, $t_0 \in [0, T]$ and $\tau_0 > 0$ be such that $2 C_0 \tau_0^2 < 1$, and
\begin{equation}
\label{eq:pastropgrand}
\eta(t_0, x) \leq \frac{1}{2} - \frac{X'(t_0)^2}{4(1 - 2 C_0 \tau_0^2)},
\end{equation}
for any $x \in [X(t_0) - R_0, X(t_0) + R_0]$ and some positive number $R_0$. Then,
\begin{equation}
\label{eq:almono}
\partial_t \bigg( \int_\R \Psi \big( \tau_0 (x - X(t)) \big) \eta(x, t) v(x, t) dx \bigg)_{|t = t_0} \leq \tau_0 \Big( 6 + \frac{6}{1 - \eta_{\rm sup}} \Big) E(\eta, v) \sup_{|x| \geq \tau_0 R_0} \partial_x \Psi(x),
\end{equation}
where
$$\eta_{\rm sup} := \sup \big\{ \eta(t, x), \ (t, x) \in [0, T] \times \R \big\} < 1.$$
\end{prop}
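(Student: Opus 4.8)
The plan is to feed the integral identity \eqref{consint} of Corollary \ref{Cor:intp} with the moving weight $\psi(x,t) := \Psi\big(\tau_0(x-X(t))\big)$, which satisfies the regularity required there because $\Psi \in \boC_b^3(\R)$ and $X \in \boC^1([0,T],\R)$. Setting $y := \tau_0(x-X(t_0))$ and recording $\partial_t\psi_{|t_0} = -\tau_0 X'(t_0)\,\partial_x\Psi(y)$, $\partial_x\psi_{|t_0} = \tau_0\,\partial_x\Psi(y)$ and $\partial_x^3\psi_{|t_0} = \tau_0^3\,\partial_x^3\Psi(y)$, the identity \eqref{consint} evaluated at $t=t_0$ becomes
$$\partial_t\Big(\int_\R \psi\,\eta v\Big)_{|t=t_0} = -\tau_0\int_\R \partial_x\Psi(y)\,\widetilde{Q} - \frac{\tau_0^3}{2}\int_\R \partial_x^3\Psi(y)\,\big(\eta+\ln(1-\eta)\big),$$
where $\widetilde{Q} := X'(t_0)\,\eta v + (1-2\eta)v^2 + \tfrac12\eta^2 + \tfrac{(3-2\eta)(\partial_x\eta)^2}{4(1-\eta)^2}$ and all fields are taken at time $t_0$. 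Since $\partial_x\Psi \ge 0$, since $|\partial_x^3\Psi|\le C_0\,\partial_x\Psi$ by assumption, and since $\eta+\ln(1-\eta)\le 0$ for $\eta<1$, this is bounded above by $-\tau_0\int_\R \partial_x\Psi(y)\,\big(\widetilde{Q}-\tfrac{\tau_0^2 C_0}{2}\,|\eta+\ln(1-\eta)|\big)$.

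Next I would split this integral over $I:=[X(t_0)-R_0,X(t_0)+R_0]$ and its complement. The heart of the argument is to show that the integrand is non-positive on $I$, i.e. that the pointwise inequality $\widetilde{Q} \ge \tfrac{\tau_0^2 C_0}{2}|\eta+\ln(1-\eta)|$ holds there. Because \eqref{eq:pastropgrand} forces $\eta<\tfrac12$, hence $1-2\eta>0$, I complete the square in $v$, writing $(1-2\eta)v^2+X'(t_0)\eta v=(1-2\eta)\big(v+\tfrac{X'(t_0)\eta}{2(1-2\eta)}\big)^2-\tfrac{X'(t_0)^2\eta^2}{4(1-2\eta)}$; after discarding the resulting non-negative square and the non-negative gradient term, it suffices to check $\tfrac12\eta^2-\tfrac{X'(t_0)^2\eta^2}{4(1-2\eta)}-\tfrac{\tau_0^2C_0}{2}|\eta+\ln(1-\eta)|\ge 0$. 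Rewriting \eqref{eq:pastropgrand} as $1-2\eta\ge\tfrac{X'(t_0)^2}{2(1-2C_0\tau_0^2)}$ gives $1-\tfrac{X'(t_0)^2}{2(1-2\eta)}\ge 2C_0\tau_0^2$, and together with the elementary bounds $|\eta+\ln(1-\eta)|\le\tfrac{\eta^2}{2(1-\eta)}$ for $\eta\ge0$ and $|\eta+\ln(1-\eta)|\le\tfrac{\eta^2}{2}$ for $\eta\le0$ a short computation closes the pointwise estimate. It is precisely the constant $1-2C_0\tau_0^2$ in \eqref{eq:pastropgrand} that leaves exactly enough room to absorb the contribution of the third term.

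On the complement of $I$ one has $|y|\ge\tau_0 R_0$, so $\partial_x\Psi(y)\le \sup_{|x|\ge\tau_0 R_0}\partial_x\Psi=:M$, and I would bound the integrand in absolute value, producing a contribution at most $\tau_0 M\big(\int_\R|\widetilde{Q}|+\tfrac{\tau_0^2C_0}{2}\int_\R|\eta+\ln(1-\eta)|\big)$. It then remains to control each term by the energy $E=\int_\R e(\eta,v)$, using $\eta\le\eta_{\rm sup}<1$, the bound $|X'(t_0)|<\sqrt2$ (available in our applications, cf. \eqref{def:Wtierce}), and $2C_0\tau_0^2<1$: the terms $\tfrac12\eta^2$, $(1-2\eta)v^2$ and the cross term $X'(t_0)\eta v$ (treated via Young's inequality) are absorbed by $\tfrac{\eta^2}{4}$ and $\tfrac{(1-\eta)v^2}{2}$, while the gradient term and the logarithmic term are absorbed by $\tfrac{(\partial_x\eta)^2}{8(1-\eta)}$ and $\tfrac{\eta^2}{4}$, each at the price of a factor $(1-\eta_{\rm sup})^{-1}$. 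Collecting constants produces the factor $6+\tfrac{6}{1-\eta_{\rm sup}}$ and yields \eqref{eq:almono}.

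The far-field bookkeeping of the previous paragraph is routine; the genuinely delicate point, and the place where the hypotheses are sharply used, is the pointwise coercivity on $I$, where one must simultaneously complete the square in $v$, exploit the precise constant $1-2C_0\tau_0^2$ coming from \eqref{eq:pastropgrand}, and control the logarithmic nonlinearity $\eta+\ln(1-\eta)$ produced by the conservation law \eqref{consint}, so that the three competing quadratic-order contributions balance to a non-negative quantity.
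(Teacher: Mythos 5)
Your proposal is correct and follows essentially the same route as the paper: apply the integral conservation law \eqref{consint} to the moving weight $\Psi(\tau_0(x-X(t)))$, prove pointwise non-negativity of the resulting integrand on $[X(t_0)-R_0,X(t_0)+R_0]$ using \eqref{eq:pastropgrand} together with $|\eta+\ln(1-\eta)|\leq\eta^2$ for $\eta\leq 1/2$, and bound the far-field contribution by the energy density. Your completion of the square in $v$ is the same computation as the paper's discriminant argument for the quadratic form $(1-2\eta)v^2-|X'(t_0)|\eta v+(\tfrac12-C_0\tau_0^2)\eta^2$, and you correctly flag that the bound $|X'(t_0)|\leq\sqrt2$, used to get the explicit constant $6+6/(1-\eta_{\rm sup})$, is supplied by \eqref{def:Wtierce} in the application rather than by the hypotheses.
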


\begin{proof}
Set $I_0 := [X(t_0) - R_0, X(t_0) + R_0]$. It follows from \eqref{eq:pastropgrand} and the condition $2 C_0 \tau_0^2 < 1$ that $\eta \leq 1/2$ on $I_0$, and real analysis then implies that
\begin{equation}
\label{eq:comparelog}
|\eta + \ln(1 - \eta)| \leq \eta^2 \quad {\rm on} \quad I_0. 
\end{equation}
Applying \eqref{consint}, we may rewrite the left-hand side of \eqref{eq:almono} as
$$- \int_\R \tau_0 \partial_x \Psi \Big( (1 - 2 \eta) v^2 + \frac{\eta^2}{2} + X'(t_0) \eta v + \frac{(3 - 2 \eta) (\partial_x \eta)^2}{4(1 - \eta)^2} \Big) - \frac{1}{2} \int_\R \tau_0^3 \partial_x^3 \Psi \big( \eta + \ln(1 - \eta) \big),$$ 
where $\partial_x \Psi$ and $\partial_x^3 \Psi$ are evaluated at the point $\tau_0 (x - X(t_0))$.

For $x \in I_0$, we deduce from \eqref{eq:comparelog} and the bound $|\partial_x^3 \Psi| \leq C_0 \partial_x \Psi$ that
\begin{align*}
\tau_0 \partial_x \Psi \Big( (1 - 2 \eta) v^2 + \frac{\eta^2}{2} + X'(t_0) \eta v \Big) & + \frac{1}{2} \tau_0^3 \partial_x^3 \Psi \big( \eta + \ln(1 - \eta) \big)\\
& \geq \tau_0 \partial_x \Psi \Big( (1 - 2 \eta) v^2 + \Big( \frac{1}{2} - C_0 \tau_0^2 \Big) \eta^2 - |X'(t_0)| \eta v \Big) \geq 0,
\end{align*}
where the non-negativity of the quadratic form, which yields the last inequality, follows from assumption \eqref{eq:pastropgrand}. Since $\eta \leq 1/2$ on $I_0$, we also have
$$\tau_0 \partial_x \Psi \frac{(3 - 2 \eta) (\partial_x \eta)^2}{4(1 - \eta)^2} \geq 0,$$
for $x \in I_0$, and therefore,
\begin{equation}
\label{eq:laoupetit}
\tau_0 \partial_x \Psi \Big( (1 - 2 \eta) v^2 + \frac{\eta^2}{2} + X'(t_0) \eta v + \frac{(3 - 2 \eta) (\partial_x \eta)^2}{4(1 - \eta)^2} \Big) + \frac{\tau_0^3}{2} \partial_x^3 \Psi \big( \eta + \ln(1 - \eta) \big) \leq 0
\end{equation}
on $I_0$.

It remains to consider the case $x \notin I_0$. In view of the definition of $\eta_{\rm sup}$, we have
\begin{align*}
- \frac{2}{1 - \eta_{\rm sup}} \frac{(1 - \eta) v^2}{2} & \leq \quad (1 - 2 \eta) v^2 \quad \leq 4 \frac{(1 - \eta) v^2}{2},\\
0 & \leq \frac{(3 - 2 \eta) (\partial_x \eta)^2}{4(1 - \eta)^2} \leq \max \Big\{ 6, \frac{6}{1 - \eta_{\rm sup}} \Big\} \frac{(\partial_x \eta)^2}{8(1 - \eta)},
\end{align*}
as well as
$$\big| \eta + \ln(1 - \eta) \big| \leq \frac{2}{1 - \eta_{\rm sup}} \frac{\eta^2}{4}, \quad {\rm and} \quad |X'(t_0) \eta v| \leq 2 \sqrt{2} \frac{\eta^2}{4} + \frac{\sqrt{2}}{1 - \eta_{\rm sup}} \frac{(1 - \eta) v^2}{2},$$
where we have used the inequality $|X'(t_0)| \leq \sqrt{2}$. Therefore,
\begin{equation}
\label{eq:laougrand}
\begin{split}
\bigg| \tau_0 \partial_x \Psi \Big( (1 - 2 \eta) v^2 + \frac{\eta^2}{2} + X'(t_0) \eta v \Big) & + \frac{1}{2} \tau_0^3 \partial_x^3 \Psi \big( \eta + \ln(1 - \eta) \big) \bigg|\\
& \leq \tau_0 \Big( 6 + \frac{6}{1 - \eta_{\rm sup}} \Big) e(\eta, v) \sup_{|x| \geq \tau_0 R_0} \partial_x \Psi(x)
\end{split}
\end{equation}
on $\R \setminus I_0$, and the conclusion follows from \eqref{eq:laoupetit} and \eqref{eq:laougrand} by integration.
\end{proof}

We are finally in position to provide the

\begin{proof}[Proof of Proposition \ref{prop:mono}]
Let $t_0 \in [0, T]$. We apply Proposition \ref{prop:almomo} with the choices $t_0 := t_0$, $\Psi(x) := (1 + \th(x))/2$, $\tau_0 := \nu_{\gc^*}/16$ and $X(t) := (a_k(t) + a_{k + 1}(t))/2$. A quick computation first shows that for this choice of $\Psi$, the inequality $|\partial_x^3 \Psi| \leq C_0 \partial_x \Psi$ holds for $C_0 := 4$. The condition $2 C_0 \tau_0^2 < 1$ is satisfied since $\nu_{\gc^*}/16 < \sqrt{2}/16.$ We turn now to condition \eqref{eq:pastropgrand}. It follows from \eqref{def:Wtierce}, namely
$$\sqrt{2 - (a_k'(t_0))^2} \geq \frac{\nu_{\gc^*}}{2} \quad {\rm and} \quad \sqrt{2 - (a_{k + 1}'(t_0))^2} \geq \frac{\nu_{\gc^*}}{2},$$
that
\begin{equation}
\label{eq:ski0}
 X'(t_0)^2 \leq 2 - \frac{\nu_{\gc^*}^2}{4}.
\end{equation}
Using the inequality $1/(1 - s) \leq 1 + 2s$ for $s \in [0, 1/2]$, we infer from \eqref{eq:ski0} and the inequality $\nu_{\gc^*}/16 < 1/4$ that
$$\frac{1}{2} - \frac{X'(t_0)^2}{4 (1 - 8 (\nu^*)^2)} \geq \frac{1}{2} - \frac{2 - \frac{\nu_{\gc^*}^2}{4}}{4 (1 - 8 (\nu^*)^2)} \geq \frac{1}{2} - \Big( \frac{1}{2} - \frac{\nu_{\gc^*}^2}{16} \Big) \Big( 1 + 16 (\nu^*)^2 \Big) \geq \frac{\nu_{\gc^*}^2}{32}.$$
At this stage, we come back to the decomposition
$$\big( \eta(\cdot, t), v(\cdot, t) \big) = R_{\gc(t),\ga(t)}(\cdot) + \eps(\cdot, t).$$
In view of \eqref{est:stamodul1}, we have $\| \eps(\cdot, t) \|_X \leq K_1 \alpha$. Hence, by the Sobolev embedding theorem, we may choose $\alpha_4 \leq \alpha_3$ so that, if $\alpha \leq \alpha_4$, then 
\begin{equation}
\label{eq:ski1}
\| \eps(\cdot, t) \|_{L^\infty} \leq \frac{\nu_{\gc^*}^2}{64}.
\end{equation}
It follows from \eqref{eq:ski1} that \eqref{eq:pastropgrand} holds everywhere on the set 
$$S := \Big\{ x \in \R, \ {\rm s.t.} \ \big| \eta_{\gc(t_0), \ga(t_0)}(x) \big| \leq \frac{\nu_{\gc^*}^2}{64} \Big\}.$$
Combining the definition of $\eta_{\gc, \ga}$, the explicit form \eqref{form:etavc} of the solitons, condition \eqref{est:stamodul2bis} on $\ga(t_0)$ and the inequality $\nu_\gc \geq \nu_{\gc^*}/2$ stated in \eqref{eq:ouf1}, it follows that the set $S$ contains the interval $[X(t_0) - R_0, X(t_0) + R_0]$ where
$$R_0 := \frac{(L - 1) + \sigma^* t_0}{2} - \frac{2}{\nu_{\gc^*}} \ln \Big( \frac{256 N}{\nu_{\gc^*}^2} \Big).$$ 
Finally, since $0 < \partial_x \Psi(x) \leq 2 \exp(- 2 |x|)$ for any $x \in \R$, we obtain from \eqref{eq:almono} that
$$\frac{d}{dt} \big( Q_k(t) \big)_{|t = t_0} \leq \boO \Big( \exp \Big( - \frac{\nu_{\gc^*} (L + \sigma^* t_0)}{16} \Big) \Big),$$
and the proof is completed.
\end{proof}

We finally give the

\begin{proof}[Proof of Corollary \ref{cor:mono}]
Since the energy $E$ and the momentum $P$ are conserved quantities for equation \eqref{HGP}, we infer from definition \eqref{eq:decompG} that
$$\frac{d}{dt} \Big( G(t) \Big) = \sum_{k = 2}^N \big( c_k^* - c_{k + 1}^* \big)\frac{d}{dt} \Big( Q_k(t) \Big).$$
Corollary \ref{cor:mono} follows by combining Proposition \ref{prop:mono} and the ordering condition \eqref{eq:ordo}.
\end{proof}

\appendix
\section{A quantitative version of the implicit function theorem}
\label{sec:implicit}

In this appendix, we give the proof of the following version of the implicit function theorem which we have invoked in order to establish Proposition \ref{prop:modul}.

\begin{prop}
\label{prop:implicit}
Let $E$ and $F$ be two Banach spaces, $V$, an open subset of $F$, and $\phi$, a function of class $\boC^2$ from $E \times V$ to $F$. We assume that
\begin{itemize}
\item[(i)] there exist an open subset $U$ of $V$ and a map $\bar{x}$ of class $\boC^1$ from $U$ to $E$ such that
$$\phi(\bar{x}(y), y) = 0,$$
for any $y \in U$.
\item[(ii)] there exist a positive constant $K$, as well as a continuous isomorphism $A_y$ from $F$ to $F$, such that
\begin{equation}
\label{impl1}
d_y \phi_{(\bar{x}(y), y)} = A_y \big( Id + H_y \big),
\end{equation}
and
\begin{equation}
\label{impl2}
d_x \phi_{(\bar{x}(y), y)} = A_y T_y,
\end{equation}
for any $y \in U$. In \eqref{impl1}, the operator norm of the continuous linear mapping $H_y$ is less than $1/2$, whereas in \eqref{impl2}, the operator norm of $T_y$ is less than $K$.
\item[(iii)] the operator norm of $d^2 \phi_{(x, y)}$ is uniformly bounded with respect to $x \in E$ and $y \in V$.
\item[(iv)] there exists a positive constant $r$ such that
$$B(y, r) \subset V,$$
for any $y \in U$. 
\end{itemize}
Then, there exist two positive numbers $\delta$ and $\Lambda := 4 K + 4$, such that, given any $y \in U$, there exists a map $\pi_y \in \boC^1(B(\bar{x}(y), \delta), V)$ such that, given any $t \in B(\bar{x}(y), \delta)$, $z = \pi_y(t)$ is the unique solution in $B(y, \Lambda \delta)$ of the equation
$$\phi(t, z) = 0.$$
Moreover, the map $\pi_y$ is Lipschitz on $B(\bar{x}(y), \delta)$, with Lipschitz constant at most $\Lambda$.
\end{prop}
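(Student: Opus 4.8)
The plan is to solve $\phi(t,z)=0$ for $z$ near $y$ by a uniform Banach fixed point argument, using the factorizations in (ii) to keep all constants under control. Fix $y\in U$ and set $L:=d_y\phi_{(\bar{x}(y),y)}=A_y(Id+H_y)$. Since $\|H_y\|<1/2$, the operator $Id+H_y$ is invertible with $\|(Id+H_y)^{-1}\|\le 2$, hence so is $L$, with $L^{-1}=(Id+H_y)^{-1}A_y^{-1}$. For $t\in B(\bar{x}(y),\delta)$ I would introduce
$$\Psi_t(z):=z-(Id+H_y)^{-1}A_y^{-1}\,\phi(t,z),$$
whose fixed points are exactly the zeros of $z\mapsto\phi(t,z)$, and show that $\Psi_t$ is a $\tfrac12$-contraction of $\bar{B}(y,\Lambda\delta)$ into itself with $\Lambda=4K+4$.

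For the contraction estimate I would write, for $z_1,z_2$ in the ball,
$$\Psi_t(z_1)-\Psi_t(z_2)=L^{-1}\int_0^1\big(L-d_y\phi_{(t,w_s)}\big)(z_1-z_2)\,ds,\qquad w_s=z_2+s(z_1-z_2),$$
and bound $\|L-d_y\phi_{(t,w_s)}\|\le M\big(\|t-\bar{x}(y)\|+\|w_s-y\|\big)\le M(1+\Lambda)\delta$, where $M$ is the uniform bound on $d^2\phi$ from (iii) (integrating the second differential along the segment from $(\bar{x}(y),y)$ to $(t,w_s)$). With $\|L^{-1}\|\le 2\|A_y^{-1}\|$ this gives a Lipschitz constant at most $2\|A_y^{-1}\|M(1+\Lambda)\delta$, which I force below $1/2$ by taking $\delta$ small. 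For the self-mapping property I would estimate $\Psi_t(y)-y=-L^{-1}\phi(t,y)$; since $\phi(\bar{x}(y),y)=0$ and $A_y^{-1}d_x\phi_{(\bar{x}(y),y)}=T_y$ with $\|T_y\|<K$, integrating in the $x$-slot yields $\|\Psi_t(y)-y\|\le 2(K+1)\delta$ once $\delta$ is small enough to absorb the $O(\|A_y^{-1}\|M\delta)$ error into the ``$+1$''. Then $\|\Psi_t(z)-y\|\le\tfrac12\Lambda\delta+2(K+1)\delta\le\Lambda\delta$ precisely when $\Lambda\ge 4(K+1)=4K+4$, which pins down the asserted value.

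The Banach fixed point theorem then produces, for each such $t$, a unique fixed point $\pi_y(t)$, i.e. the unique zero of $\phi(t,\cdot)$ in the ball; the estimate $\|\pi_y(t)-y\|\le 2\|\Psi_t(y)-y\|<\Lambda\delta$ places it in the open ball $B(y,\Lambda\delta)$, and imposing also $\delta\le r/\Lambda$ keeps it in $V$ by (iv). For Lipschitz continuity I would subtract the fixed-point identities at $t_1,t_2$, control $\Psi_{t_1}(z_1)-\Psi_{t_1}(z_2)$ by the contraction bound and $\Psi_{t_1}(z_2)-\Psi_{t_2}(z_2)=-L^{-1}\big(\phi(t_1,z_2)-\phi(t_2,z_2)\big)$ by $2(K+1)\|t_1-t_2\|$ (same integration in the $x$-slot), and absorb $\tfrac12\|z_1-z_2\|$ to the left to get $\|\pi_y(t_1)-\pi_y(t_2)\|\le 4(K+1)\|t_1-t_2\|=\Lambda\|t_1-t_2\|$. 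Finally, since $d_y\phi_{(t,z)}=L+(d_y\phi_{(t,z)}-L)$ remains invertible on the ball (the perturbation has $L^{-1}$-norm $\le 1/2$), the classical implicit function theorem applies locally and, by the uniqueness just established, its local $\boC^1$ solution coincides with $\pi_y$, so $\pi_y\in\boC^1(B(\bar{x}(y),\delta),V)$.

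The main obstacle is making the choice of $\delta$ uniform in $y\in U$ while hitting the exact constant $\Lambda=4K+4$: this rests on the uniform bound $M$ from (iii) together with control of $\|A_y^{-1}\|$ (finite and, in the intended applications, uniform), and on the bookkeeping that confines every $O(\|A_y^{-1}\|M\delta)$ error to the harmless ``$+1$'' beside $K$, so that no constant larger than $4K+4$ survives.
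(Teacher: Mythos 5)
Your argument is correct and follows essentially the same route as the paper: a quantitative Banach fixed-point argument with all constants tracked uniformly in $y$, yielding the same value $\Lambda = 4K+4$ and the same final appeal to the classical implicit function theorem for the $\boC^1$ regularity. The only (cosmetic) difference is that you contract the Newton map $z \mapsto z - L^{-1}\phi(t,z)$ in the $z$-variable alone, whereas the paper contracts a shifted map on the product $E\times F$ built from $A_y^{-1}$ only; both versions rest on the same implicit uniform control of $\|A_y^{-1}\|$, which you rightly flag.
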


\begin{remark}
When $y$ is fixed in $U$, assumptions $(i)$ and $(ii)$ are enough to apply the implicit function theorem in order to construct a map $\pi_y \in \boC^1(B(\bar{x}(y), \delta), V)$ as in Proposition \ref{prop:implicit}. The difficulty comes from the fact that, in this case, the constants $\delta$ and $K$ depend a priori on $y$.
\end{remark}

\begin{proof}
The proof is similar to the usual proof of the implicit function theorem. We fix $y \in U$ and we set
$$\Phi(x, z) := \big( \Phi_1(x ,z), \Phi_2(x ,z) \big) := \big( x, A_y^{- 1} \phi(x, z) - T_y(x) \big).$$
The function $\Phi$ is well-defined and of class $\boC^2$ from $E \times V$ to $E \times F$. Moreover, its differential at the point $(\bar{x}(y), y)$ may be written as
$$d\Phi_{(\bar{x}(y), y)} = \begin{pmatrix} Id & 0 \\ 0 & Id + H(y) \end{pmatrix}.$$
In view of assumption $(ii)$, the operator norm of the maps $d\Phi_{(\bar{x}(y), y)} - Id$ is less than $1/2$ for any $y \in U$. Combining with assumption $(iii)$, we infer the existence of a positive number $\rho < r$ such that the operator norm $d\Phi_{(x, z)} - Id$ is less than $3/4$ for any $(x, z) \in B(\bar{x}(y), \rho) \times B(y, \rho)$.

Given a point $t \in E$, we next set
$$\Psi(x, z) := (t, - T_y(t)) + (x, z) - \Phi(x, z).$$
The map $\Psi$ is well-defined on $E \times V$, and Lipschitz on $B(\bar{x}(y), \rho) \times B(y, \rho)$, with Lipschitz constant at most $3/4$. Moreover, one can check that
$$\Psi(\bar{x}(y), y) = \big( t, y - T_y(t - \bar{x}(y)) \big),$$
so that, letting $\Lambda := 4 + 4 K$,
$$\big\| \Psi(\bar{x}(y), y) - (\bar{x}(y), y) \big\|_{E \times F} \leq \frac{\rho}{4},$$
when $t \in B(\bar{x}(y), \rho/\Lambda)$. In this case, the map $\Psi$ is a contraction from $B(\bar{x}(y), \rho) \times B(y, \rho)$ to $B(\bar{x}(y), \rho) \times B(y, \rho)$. By the fixed point theorem, there exists a unique point $(x_t, z_t)$ in $B(\bar{x}(y), \rho) \times B(y, \rho)$ such that $\Psi(x_t, z_t) = (x_t, z_t)$, i.e.
$$x_t = t \quad {\rm and} \quad \phi(t, z_t) = 0.$$

At this stage, we set $\delta := \rho/\Lambda < \rho < r$, and
$$\pi_y(t) := z_t \in V.$$
In view of the definition of $z_t$, given two points $t_1$ and $t_2$ in $B(\bar{x}(y), \delta)$, we have
$$\pi_y(t_1) - \pi_y(t_2) = - T_y(t_1 - t_2) + \big( \pi_y(t_1) - \Phi_2(t_1, \pi_y(t_1) - \pi_y(t_2) - \Phi_2(t_2, \pi_y(t_2) \big),$$
so that
$$\| \pi_y(t_1) - \pi_y(t_2) \|_F \leq K \| t_1 - t_2 \|_E + \frac{3}{4} \big( \| t_1 - t_2 \|_E + \| \pi_y(t_1) - \pi_y(t_2) \|_F \big).$$
As a consequence, the map $\pi_y$ is Lipschitz on $B(\bar{x}(y), \delta)$, with Lipschitz constant at most $4 K + 3 \leq \Lambda$. The fact that $\pi_y$ is of class $\boC^1$ on $B(\bar{x}(y), \delta)$ follows from the implicit function theorem. This completes the proof of Proposition \ref{prop:implicit}.
\end{proof}

\begin{merci}
P.G. and D.S. are partially sponsored by the project ``Around the dynamics of the Gross-Pitaevskii equation'' (JC09-437086) of the Agence Nationale de la Recherche. 
\end{merci}

\bibliographystyle{plain}
\bibliography{Bibliogr}

\begin{thebibliography}{10}

\bibitem{BetGrSa2}
F.~B\'ethuel, P.~Gravejat, and J.-C. Saut.
\newblock Existence and properties of travelling waves for the
  {Gross}-{Pitaevskii} equation.
\newblock In A.~Farina and J.-C. Saut, editors, {\em Stationary and time
  dependent {Gross}-{Pitaevskii} equations}, volume 473 of {\em Contemp.
  Math.}, pages 55--104. Amer. Math. Soc., Providence, RI, 2008.

\bibitem{BeGrSaS1}
F.~B\'ethuel, P.~Gravejat, J.-C. Saut, and D.~Smets.
\newblock Orbital stability of the black soliton for the {Gross}-{Pitaevskii}
  equation.
\newblock {\em Indiana Univ. Math. J}, 57(6):2611--2642, 2008.

\bibitem{BeGrSaS2}
F.~B\'ethuel, P.~Gravejat, J.-C. Saut, and D.~Smets.
\newblock On the {Korteweg}-de {Vries} long-wave approximation of the
  {Gross}-{Pitaevskii} equation {I}.
\newblock {\em Int. Math. Res. Not.}, 2009(14):2700--2748, 2009.

\bibitem{Chiron7}
D.~Chiron.
\newblock Travelling waves for the nonlinear {Schr\"odinger} equation with
  general nonlinearity in dimension one.
\newblock {\em Preprint}, 2011.

\bibitem{ChirRou2}
D.~Chiron and F.~Rousset.
\newblock The {KdV/KP-I} limit of the nonlinear {Schr\"odinger} equation.
\newblock {\em SIAM J. Math. Anal.}, 42(1):64--96, 2010.

\bibitem{DunfSch0}
N.~Dunford and J.T. Schwartz.
\newblock {\em Linear operators. {Part} {II}. {Spectral} theory. {Self}-adjoint
  operators in {Hilbert} space}, volume~7 of {\em Pure and Applied
  Mathematics}.
\newblock Interscience Publishers, John Wiley and Sons, New York-London-Sydney,
  1963.
\newblock With the assistance of W.G. Bade and R.G. Bartle.

\bibitem{FaddTak0}
L.D. Faddeev and L.A. Takhtajan.
\newblock {\em Hamiltonian methods in the theory of solitons}.
\newblock Classics in Mathematics. Springer-Verlag, Berlin-Heidelberg-New York,
  2007.
\newblock Translated by A.G. Reyman.

\bibitem{Gallo3}
C.~Gallo.
\newblock Schr\"odinger group on {Zhidkov spaces}.
\newblock {\em Adv. Differential Equations}, 9(5-6):509--538, 2004.

\bibitem{Gerard1}
P.~G\'erard.
\newblock The {Cauchy} problem for the {Gross}-{Pitaevskii} equation.
\newblock {\em Ann. Inst. Henri Poincar\'e, Analyse Non Lin\'eaire},
  23(5):765--779, 2006.

\bibitem{GeraZha1}
P.~G\'erard and Z.~Zhang.
\newblock Orbital stability of traveling waves for the one-dimensional
  {Gross}-{Pitaevskii} equation.
\newblock {\em J. Math. Pures Appl.}, 91(2):178--210, 2009.

\bibitem{GriShSt1}
M.~Grillakis, J.~Shatah, and W.A. Strauss.
\newblock Stability theory of solitary waves in the presence of symmetry {I}.
\newblock {\em J. Funct. Anal.}, 74(1):160--197, 1987.

\bibitem{LinZhiw1}
Z.~Lin.
\newblock Stability and instability of traveling solitonic bubbles.
\newblock {\em Adv. Differential Equations}, 7(8):897--918, 2002.

\bibitem{MartMer5}
Y.~Martel and F.~Merle.
\newblock Stability of two soliton collision for nonintegrable {gKdV}
  equations.
\newblock {\em Commun. Math. Phys.}, 286(1):39--79, 2009.

\bibitem{MartMer6}
Y.~Martel and F.~Merle.
\newblock Inelastic interaction of nearly equal solitons for the quartic {gKdV}
  equation.
\newblock {\em Invent. Math.}, 183(3):563--648, 2011.

\bibitem{MarMeTs1}
Y.~Martel, F.~Merle, and T.-P. Tsai.
\newblock Stability and asymptotic stability in the energy space of the sum of
  {$N$} solitons for subcritical {gKdV} equations.
\newblock {\em Commun. Math. Phys.}, 231(2):347--373, 2002.

\bibitem{MarMeTs2}
Y.~Martel, F.~Merle, and T.-P. Tsai.
\newblock Stability in {$H^1$} of the sum of {$K$} solitary waves for some
  nonlinear {Schr\"odinger} equations.
\newblock {\em Duke Math. J.}, 133(3):405--466, 2006.

\bibitem{Miura2}
R.M. Miura.
\newblock The {Korteweg}- de {Vries} equation: a survey of results.
\newblock {\em SIAM Rev.}, 18(3):412--459, 1976.

\bibitem{Tartous0}
H.M. Tartousi.
\newblock PhD thesis.
\newblock In preparation.

\bibitem{Vartani2}
A.H. Vartanian.
\newblock Long-time asymptotics of solutions to the {Cauchy} problem for the
  defocusing nonlinear {Schr\"odinger} equation with finite-density initial
  data. {II}. {Dark} solitons on continua.
\newblock {\em Math. Phys. Anal. Geom.}, 5(4):319--413, 2002.

\bibitem{ShabZak2}
V.E. Zakharov and A.B. Shabat.
\newblock Interaction between solitons in a stable medium.
\newblock {\em Sov. Phys. JETP}, 37:823--828, 1973.

\bibitem{Zhidkov1}
P.E. Zhidkov.
\newblock {\em {Korteweg}-{De} {Vries} and nonlinear {Schr\"odinger} equations
  : qualitative theory}, volume 1756 of {\em Lecture Notes in Mathematics}.
\newblock Springer-Verlag, Berlin, 2001.

\end{thebibliography}

\end{document}